\documentclass[journal,twoside,web]{ieeecolor}
\usepackage{generic}

\usepackage{amsmath,amssymb,amsfonts}

\usepackage{graphicx}
\usepackage{mathtools}

\usepackage{multirow}
\usepackage{booktabs}
\usepackage{siunitx}

\usepackage{makecell}

\usepackage[ruled, vlined, linesnumbered]{algorithm2e}
\SetKwRepeat{Do}{do}{while}%
\SetKwComment{Comment}{$\triangleright$\ }{}
\DontPrintSemicolon
\SetNlSty{}{}{:} 
\SetAlgoNlRelativeSize{-1} 
\SetCommentSty{} 

\SetKwInOut{KwInit}{Init}

\usepackage{etoolbox} 
\makeatletter
\patchcmd{\@algocf@start}{-1.5em}{0em}{}{} 
\makeatother

\usepackage{textcomp}
\def\BibTeX{{\rm B\kern-.05em{\sc i\kern-.025em b}\kern-.08em
    T\kern-.1667em\lower.7ex\hbox{E}\kern-.125emX}}
\usepackage{fancyhdr}
\fancypagestyle{firstpage}{
    \fancyhf{}
    \fancyfoot[C]{\small This work has been submitted to the IEEE for possible publication. \\Copyright may be transferred without notice, after which this version may no longer be accessible.}
}

 \usepackage{cite}

\newtheorem{theorem}{Theorem}
\newtheorem{proposition}{Proposition}
\newtheorem{definition}{Definition}
\newtheorem{remark}{Remark}
\newtheorem{assumption}{Assumption}
\newtheorem{lemma}{Lemma}
\newtheorem{corollary}{Corollary }

\usepackage{hyperref}
\hypersetup{
    colorlinks=true,
    linkcolor=blue,
    citecolor=blue,
    urlcolor=blue,
    bookmarksnumbered=true,
    bookmarksopen=true
}

\begin{document}

\title{Data-Driven Continuous-Time Linear Quadratic Regulator via Closed-Loop and Reinforcement Learning Parameterizations}
\author{Armin Gießler, \IEEEmembership{Member, IEEE}, Felix Thömmes, \IEEEmembership{Member, IEEE}, \\ and Sören Hohmann, \IEEEmembership{Senior Member, IEEE}
\thanks{A. Gießler, F. Thömmes, and S. Hohmann are with the Institute of Control Systems, Karlsruhe Institute of Technology, 76131 Karlsruhe, Germany (e-mail: \{armin.giessler, felix.thoemmes, soeren.hohmann\}@kit.edu).}}

\maketitle
\thispagestyle{firstpage}
\begin{abstract}
This paper studies data-driven approaches to the continuous-time linear quadratic regulator (LQR) problem based on two existing parameterizations, namely a closed-loop (CL) parameterization from behavioral system theory and an integral reinforcement learning (IRL) parameterization. 
The CL parameterization characterizes the closed-loop system via a matrix that satisfies equality constraints. 
While this parameterization has been extensively studied for discrete-time systems, we adapt key results to the continuous-time setting and
develop a policy iteration (PI) scheme, derive a data-driven continuous-time algebraic Riccati equation (CARE), and introduce an  alternative convex problem formulation.
The IRL parameterization utilizes off-policy data to perform policy evaluation, which is then used for PI or value iteration. 
Within the IRL framework, we derive a policy gradient flow and propose convex reformulations of the LQR problem.
Finally, we provide a unified treatment of these parameterizations that enables a systematic understanding of existing approaches and clarifies their structural relationships.
\end{abstract}

\begin{IEEEkeywords}
Linear quadratic regulator (LQR), policy optimization (PO), data-driven control, behavioral system theory, policy gradient, integral reinforcement learning
\end{IEEEkeywords}

\section{Introduction}
\IEEEPARstart{I}{n} the past two decades, a variety of promising data-driven control approaches have emerged, 
deriving the control law directly from measured data without explicitly identifying the underlying system dynamics. 
Given meaningful input-output or input-state data, these methods synthesize controllers to achieve stabilization or performance optimization.
Broadly, they can be categorized into Reinforcement Learning (RL) approaches \cite{sutton2018reinforcement,recht2019tour,vrabie2012optimal,bucsoniu2018reinforcement} and behavioral system theory approaches \cite{markovsky2023data,MARKOVSKY202142,Waarde2025}. 
In RL, the control policy is learned to maximize cumulative rewards via model-free schemes, such as value, Q\nobreakdash-, or policy iteration. The behavioral framework characterizes system behavior entirely from data, enabling model-based control methods to be reformulated in the notation of data.
Despite differing perspectives, both frameworks share strong conceptual links, as will be explored in this article. However, each framework lacks certain complementary approaches that are developed in this work.

\subsubsection*{Literature Review}
This review focuses on infinite-horizon data-driven linear quadratic regulator (LQR) methods, beginning with behavioral approaches. In \cite{de2019formulas}, the closed-loop dynamics of a discrete-time linear system with state feedback are parametrized by a matrix satisfying a matrix equality. 
When state-derivative measurements are available, this closed-loop (CL) parameterization extends to continuous-time systems.
This representation laid the foundation for many subsequent data-driven methods.
Convex optimization and policy gradient solutions to the discrete-time LQR problem are presented in \cite{de2019formulas} and \cite{Zhao2023}, respectively.
For continuous-time systems, \cite{lopez2025databasedcontrolcontinuoustimelinear} computes the Riccati matrix via convex optimization, followed by linear equations for the optimal feedback gain.
A policy iteration (PI) based on the CL parameterization is presented in \cite{Lopez2023}.
Robustness analysis and modification of these methods to handle noise-corrupted measurements are addressed in \cite{de2019formulas,dorfler2023,dorfler2022,DEPERSIS2021109548}. 
In \cite{ZhaoCovariance,Zhao2025}, a variant of the CL parameterization based on the sample covariance is proposed, making the parameterization independent of the number of samples.

In the context of RL for continuous-time systems, integral RL (IRL) replaces the model-based policy evaluation in PI with an integral variant, enabling learning without knowledge of the system matrix or state derivatives \cite{VRABIE2009477,Tamimi2007},\cite[Ch.~3]{vrabie2012optimal}. 
However, the input matrix is still required in the policy improvement step. To overcome this, \cite{JIANG20122699}\cite[Ch.~2]{jiang2017radp} propose an offline PI method for completely unknown  dynamics. 
Online Q-learning approaches exist that require an initial stabilizing policy \cite{LEE20122850}, whereas methods in \cite{vamvoudakis2017q,Possieri2022,Possieri2023} eliminate this requirement.
An offline data-driven value iteration (VI)  in \cite{BIAN2016348} similarly eliminates the need for an initial stabilizing policy, but requires specification of hyperparameters. 
All these RL methods utilize integrated state and input trajectories over finite time intervals to avoid  state derivatives. When implemented digitally, these integrals can be approximated using sampled measurements and numerical integration techniques, as discussed in \cite{Song2025}.

Another research direction employs a different parameterization through the data informativity framework, 
 which addresses cases where the data is not persistently exciting. Initially developed for discrete-time systems \cite{Waarde2025,WaardeInform,WaardeSLemma} and later extended to continuous-time systems \cite{jaap2025}, this approach indirectly parametrizes the unknown system via two quadratic matrix inequalities (QMIs). The resulting linear matrix inequality (LMI), derived via the matrix S-lemma, characterizes all systems consistent with the data and enables non-conservative controller synthesis, applicable to $H_2$ control \cite{WaardeSLemma} with the LQR problem as a special case \cite{Feron}. 
These results assume noise satisfying a quadratic inequality, a generalization of an $l_2$-energy bound, while \cite{Finsler2021} extends the framework to the noiseless case. 
 Using the same noise model, \cite{Berberich2020} achieves robust stabilization of discrete-time linear systems via the CL parameterization from \cite{de2019formulas}.
Quadratic stabilization of discrete-time linear systems under $l_\infty$-bounded noise is studied in \cite{DAI2023111041} using a polynomial matrix-based parameterization.
In \cite{Rapisarda2024}, orthogonal polynomial bases approximate the continuous-time dynamics through data-dependent coefficient sequences, enabling controller design without requiring state-derivative measurements.
In \cite{Silva2019}, the discrete-time LQR problem is solved via a closed-form solution of the algebraic Riccati equation,
employing a data-driven parameterization of the extended observability and Toeplitz matrices.

\subsection{Contributions}

The contributions of this work are threefold: 
\begin{enumerate}
    \item We provide a unified treatment of the CL and IRL parameterizations for data-driven continuous-time LQR and analyze their structural connections and differences. This establishes a systematic perspective on existing approaches and clarifies their interrelationships.
    \item Under the CL parameterization, we adapt discrete-time results to continuous time without requiring state-derivative measurements, and we 
    derive a PI scheme, a data-driven continuous-time algebraic Riccati equation (CARE), and an alternative convex problem formulation.
\item Under the IRL parameterization, we build on policy evaluation results to derive convex optimization formulations, a policy gradient flow, and a data-driven CARE.
\end{enumerate}

Table~\ref{tab:contr} summarizes the contributions of this article relative to existing approaches.
For cells with multiple components, the components correspond to algebraically different formulations within the same parameterization. 
The paper focuses on structural aspects and excludes stochastic noise.

\begin{table}[tbh]
   \centering
   \label{tab:contr}
   \caption{Comparison of the proposed contributions with existing methods under the  CL and IRL parameterizations.}
   \setlength{\tabcolsep}{2pt}
\begin{tabular}{ccc}
\toprule
\multirow{2}{*}{ \centering \textbf{Method}} & \multicolumn{2}{c}{\textbf{Parameterization}} \\
\cmidrule(lr){2-3}
& \textbf{CL} & \textbf{IRL} \\
\midrule
Policy Evaluation     & adapted  from \cite{Zhao2023}       & established in \cite{JIANG20122699} \\
Policy Improvement    & novel                               & established in \cite{JIANG20122699} \\
Policy Iteration      & novel, established in \cite{Lopez2023}                             & established in \cite{JIANG20122699} \\
CARE & novel   & novel, established in \cite{BIAN2016348}\\ 
Riccati Flow & novel  & adapted  from \cite{BIAN2016348} \\
Value Iteration       & novel                                & established in \cite{BIAN2016348} \\
\multirow{2}{*}{ Convex Opt. Problem}   & novel, adapted from \cite{de2019formulas},  & \multirow{2}{*}{ novel}  \\ 
& established in \cite{lopez2025databasedcontrolcontinuoustimelinear} & \\
Policy Gradient Flow  & adapted  from \cite{Zhao2023}        & ~novel \\
\bottomrule
\end{tabular}
\end{table}

\subsection{Paper Organization}

Preliminaries are given in Section~\ref{sec:prelim}. Section~\ref{sec:param} introduces the data-driven parameterizations and compares them. PI algorithms are presented in Section~\ref{sec:PI}, while the CARE, Riccati flow, and VI are part of Section~\ref{sec:CARE}. Section~\ref{sec:conv_prob} focuses on convex programs, and Section~\ref{sec:grad_flow} formulates policy gradient flows. Numerical results and discussion are provided in Section~\ref{sec:comp}, followed by a conclusion in the final section.

\subsection{Notation}
The set of (nonnegative) real numbers is denoted by $\mathbb{R}$ ($\mathbb{R}_{\geq 0}$), and the set of nonnegative integers is denoted by $\mathbb{N}_0$. The set of $n$-dimensional symmetric matrices is denoted by $\mathbb{S}^n$, with $\mathbb{S}_{++}^n$ ($\mathbb{S}_+^n$) denoting the subsets of symmetric positive definite (semidefinite) matrices. A symmetric positive definite (semidefinite) matrix $A$ is denoted by $A \succ 0$ ($A \succeq 0$).
The pseudoinverse of matrix $A$ is denoted by $A^\dagger$. If $A$ has full column (row) rank, the left-inverse (right-inverse) is $A^\dagger = (A^\top A)^{-1}A^\top$ $(A^\dagger = A^\top ( A A^\top)^{-1})$. Otherwise, 
$A^\dagger$ denotes the minimum-norm least-squares solution. 
The eigenvalues and trace of a square matrix $A$ are denoted by $\lambda_i(A)$ and $\operatorname{tr}(A)$, respectively. 
For a matrix $A$, $\operatorname{vec}(A)$, $\operatorname{vech}(A)$, $\operatorname{rank}(A)$, $\operatorname{ker}(A)$, and $\operatorname{im}(A)$ denote its vectorization, half-vectorization, rank, kernel, and column space, respectively.

\section{Preliminaries}
\label{sec:prelim}
\subsection{Linear Quadratic Regulator}
\label{subsec:LQR} 
We briefly recall model-based LQR problem formulations to improve the understanding of the data-driven approaches presented later.
Consider the system, referred to as $(A,B)$,
\begin{align}
 \dot x(t) = A x(t) + Bu(t), \quad x(0)=x_0, \label{math:lin_sys}
\end{align} 
where $x(t)\in\mathbb{R}^{n}$ and $u(t)\in\mathbb{R}^m$.
The LQR objective is 
\begin{align}
    f(x,u) & = \int_{0}^{\infty} x(t)^\top Q x(t) + u(t)^\top R u(t)  \mathrm{d}t, \label{math:LQR_cost}
\end{align}
where $Q\in\mathbb{S}^n_{+}$ and $R\in\mathbb{S}^m_{++}$.
The LQR problem is \cite{anderson2007}
\begin{align}
    \min_{u,x} ~& f(x,u)\quad \text{s.t.} \quad \eqref{math:lin_sys}. \label{math:LQR_standard}
\end{align}

\begin{assumption}
\label{ass:1}
The system $(A,B)$ is stabilizable and the pair $(A,\sqrt{Q})$ is detectable. 
\end{assumption}

Assumption~\ref{ass:1} holds throughout. 
Under Assumption~\ref{ass:1}, the optimal control $u^*=-K^*x$, with $K^*= R^{-1} B^\top P^*$, stabilizes the system \eqref{math:lin_sys}, where $P^*$ is the unique, positive definite solution of the CARE \cite[Th. 5]{kuvcera1973review}
\begin{align}
    A^\top P + P A - PBR^{-1}B^\top P + Q = 0. \label{math:CARE}
\end{align}

The LQR cost \eqref{math:LQR_cost}
can be expressed as a function of the feedback gain $K$ under closed-loop dynamics $A_K\coloneqq A-BK$
\begin{align}
    f_K(x_0) & = \int_{0}^{\infty} x^\top \left(Q + K^\top R K  \right) x \mathrm{d}t \notag\\
    & = \int_0^\infty x_0^\top e^{A_K^\top t} \left( Q + K^\top R K  \right) e^{A_K t} x_0 \mathrm{d}t \notag \\
    & = x_0^\top P x_0 = \operatorname{tr }(P x_0 x_0^\top ) \label{math:boyd_obj}\\
    & = \operatorname{tr }\left( \left(Q + K^\top R K  \right) Y \right), \label{math:feron_obj}
\end{align} 
where 
\begin{align}
    P & = \int_0^\infty e^{A_K^\top t }\left(Q + K^\top R K  \right) e^{A_K t}\mathrm{d}t, \label{math:int_P} \\
    Y & =  \int_0^\infty e^{A_K t } x_0 x_0^\top e^{A_K^\top t} \mathrm{d}t. \label{math:int_Y}
\end{align}
If $A_K$ is Hurwitz, then \eqref{math:int_P} and \eqref{math:int_Y} are finite, and $P\succeq 0$ and $Y\succeq 0$ are the unique solutions to the Lyapunov equations 
\begin{align}
    0 & = A_K^\top P + P A_K + Q + K^\top R K, \label{math:Lyap1} \\
    0 & = A_K Y + Y A_K^\top + x_0 x_0^\top. \label{math:Lyap2}
\end{align}
Substituting $K=R^{-1} B^\top P$ into \eqref{math:Lyap1} recovers the CARE \eqref{math:CARE}. 

The LQR cost is lower bounded by the quadratic function $x_0^\top P x_0 \leq f_K(x_0)$.
The greatest lower bound $f^*=x_0^\top P^* x_0$, denoting the minimum value of \eqref{math:LQR_standard}, is attained by selecting a stabilizing feedback $K$ that satisfies \eqref{math:Lyap1} and minimizes \eqref{math:boyd_obj}. Alternatively, $f^*$ can be computed via the convex program~\cite{Balakrishnan1995}%
\begin{subequations}
\label{math:LQR_boyd}
\begin{align}
    \max_P ~& \operatorname{tr }(P) \\
    \text{s.t.}~& \begin{bmatrix} A^\top P + P A + Q & PB \\ B^\top P & R \end{bmatrix} \succ 0, \label{math:LQR_boyd_care}\\
    & P\succ 0.
\end{align}
\end{subequations}
The Schur complement applied to the LMI \eqref{math:LQR_boyd_care} yields the relaxed Riccati inequality $A^\top P + PA  -PBR^{-1}B^\top P + Q \succ 0$ which encompasses the CARE solutions where the inequality saturates to equality. Maximizing $\operatorname{tr}(P)$ over this convex set attains the positive definite solution of the CARE \cite[p.~115]{LMIbook}.

On the other hand, the optimal LQR objective $f^*$ is upper bounded by \eqref{math:feron_obj} for every stabilizing feedback $K$, where $Y$ satisfies \eqref{math:Lyap2}. The tightest upper bound under state-feedback is characterized by $K^*$ and $Y^*$ solving the problem\footnote{The problem \eqref{math:LQR_feron} can also be derived as an $H_2$-norm minimization problem, where the $H_2$-norm of the transfer function from a disturbance acting on $\dot x$ to the output $z=R^{1/2}u + Q^{1/2}x$ is minimized \cite{Feron1996}.}%
\begin{subequations}
    \label{math:LQR_feron}
\begin{align}
    \min_{K,Y}~ &   \operatorname{tr }(QY) + \operatorname{tr }(K^\top R K Y)\\
    \text{s.t.}~ & A_K Y + Y A_K^\top + x_0x_0^\top \prec 0, \label{math:LQR_feron_stab}\\
    & Y\succ 0,
\end{align}
\end{subequations}
which admits a convex reformulation via the change of variables $Z=KY$ \cite{Feron}. 
The problem \eqref{math:LQR_feron} minimizes the LQR objective \eqref{math:feron_obj} parametrized in $K$ subject to a stability constraint \eqref{math:LQR_feron_stab}, a relaxation of \eqref{math:Lyap2}, ensuring that $K$ stabilizes $(A,B)$\footnote{The standard Lyapunov stability LMI can be recovered via the congruent transformation $P=Y^{-1}$, yielding $A_K^\top P + P A_K \prec - P x_0 x_0^\top P \preceq 0$.}. 

\begin{theorem}[\texorpdfstring{\cite[Duality]{Balakrishnan1995}}{}]
    \label{math:theo:dual} 
    The problems \eqref{math:LQR_boyd} and \eqref{math:LQR_feron} are duals of each other and strong duality holds.
\end{theorem}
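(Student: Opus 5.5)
The plan is to compute the Lagrange dual of the SDP \eqref{math:LQR_boyd} explicitly and then show that, after eliminating one block of the multiplier by a Schur complement and substituting $Z=-KY$, it becomes \eqref{math:LQR_feron}. Strong duality then follows from a Slater-type argument.

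To keep $x_0$ visible, I write the primal objective as $\operatorname{tr}(PX_0)$. Here $X_0=x_0x_0^\top$ corresponds to \eqref{math:boyd_obj}, and $X_0=I$ to the $\operatorname{tr}(P)$ form in \eqref{math:LQR_boyd}. The argument is identical for any $X_0\succeq 0$, and $X_0=I$ is the clean case. Strict inequalities are treated as non-strict ones, with optimal values understood as suprema and infima; this does not change the values.

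\textbf{Step 1: the Lagrangian.} I attach a multiplier
\[
W=\begin{bmatrix} Y & Z^\top\\ Z & V\end{bmatrix}\succeq 0
\]
to the LMI \eqref{math:LQR_boyd_care}, and $S\succeq 0$ to $P\succeq 0$. Using
\[
\operatorname{tr}\!\left(\begin{bmatrix} A^\top P+PA+Q & PB\\ B^\top P & R\end{bmatrix} W\right)=\operatorname{tr}\big((A^\top P+PA+Q)Y\big)+2\operatorname{tr}(PBZ)+\operatorname{tr}(RV),
\]
the Lagrangian is affine in $P$:
\[
L=\operatorname{tr}\big(P\,(X_0+AY+YA^\top+BZ+Z^\top B^\top+S)\big)+\operatorname{tr}(QY)+\operatorname{tr}(RV).
\]

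\textbf{Step 2: the dual function.} The supremum of $L$ over $P\in\mathbb{S}^n$ is finite only if the coefficient of $P$ vanishes. Since $S\succeq 0$ is otherwise free, this gives the dual constraint
\[
AY+YA^\top+BZ+Z^\top B^\top+X_0\preceq 0 .
\]
The dual problem is therefore
\[
\min\ \operatorname{tr}(QY)+\operatorname{tr}(RV)\quad\text{s.t. } W\succeq 0 \text{ and the constraint above}.
\]
This is already the convex reformulation mentioned after \eqref{math:LQR_feron}.

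\textbf{Step 3: elimination of $V$.} For $Y\succ 0$, the Schur complement turns $W\succeq 0$ into $V\succeq ZY^{-1}Z^\top$. Because $R\succ 0$, the minimum over $V$ is attained at $V=ZY^{-1}Z^\top$. Setting $K=-ZY^{-1}$ gives $\operatorname{tr}(RV)=\operatorname{tr}(K^\top RKY)$ and $BZ+Z^\top B^\top=-BKY-YK^\top B^\top$. The dual constraint then reads $A_KY+YA_K^\top+X_0\preceq 0$, which is exactly \eqref{math:LQR_feron}. The map $(K,Y)\leftrightarrow(Z,Y)$ is a bijection on $Y\succ 0$, so the two problems have the same value.

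\textbf{Step 4: strong duality.} I verify Slater's condition on the dual side. By Assumption~\ref{ass:1} there is a stabilizing $K_0$. I take $Y_0\succ 0$ as the solution of
\[
A_{K_0}Y_0+Y_0A_{K_0}^\top+X_0+I=0,
\]
set $Z_0=-K_0Y_0$ and $V_0=K_0Y_0K_0^\top+I$. Then both dual constraints hold strictly. The primal is feasible, since $P^*$ satisfies the LMI with equality in the Schur complement. Conic duality then gives a zero duality gap and attainment on the primal side, consistent with the primal optimizer being $P^*$.

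\textbf{Main obstacle.} I expect the delicate part to be the boundary cases. When $Y$ is singular, which can happen for the rank-one choice $X_0=x_0x_0^\top$, the Schur-complement step needs care. I would handle it with the generalized Schur complement using $Y^\dagger$, or by perturbing $X_0\to X_0+\varepsilon I$ and letting $\varepsilon\to 0$.

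The mismatch between the strict LMIs stated in \eqref{math:LQR_boyd} and \eqref{math:LQR_feron} and the closed cones used in conic duality also needs attention. The optimum sits on the boundary: at $P^*$ the Schur complement is zero, so it is excluded by the strict inequality. I would argue that the optimal values over the open sets equal those over their closures, because both feasible sets have nonempty interior and the objectives are continuous.
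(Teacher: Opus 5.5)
The paper does not prove this theorem; it only cites Balakrishnan--Vandenberghe. Your Steps 1--4 are the standard SDP Lagrange-duality argument for this pair, and for the non-strict problems they are correct. The dual computation, the elimination of $V$, the substitution $Z=-KY$, and your Slater point $(Y_0,Z_0,V_0)$ (with the dual bounded below by weak duality) all check out. Your worry about singular $Y$ also disappears in the $I_n$ form the paper adopts. If $Yv=0$ with $v\neq 0$, then $W\succeq 0$ forces $Zv=0$, so $v^\top(AY+YA^\top+BZ+Z^\top B^\top+I)v=v^\top v>0$, contradicting the dual constraint. Hence $Y\succ 0$ automatically.

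The step that fails as written is in your last paragraph: the claim that the primal feasible set has nonempty interior, equivalently that passing to strict inequalities ``does not change the values''. Every symmetric $P$ with $A^\top P+PA+Q-PBR^{-1}B^\top P\succeq 0$ satisfies $P\preceq P^*$. Indeed, $\Delta=P^*-P$ obeys $A_{K^*}^\top\Delta+\Delta A_{K^*}\preceq-\Delta BR^{-1}B^\top\Delta\preceq 0$ with $A_{K^*}$ Hurwitz. So whenever $P^*$ is singular, which Assumption~\ref{ass:1} permits, no $P\succ 0$ is feasible and the strict problem \eqref{math:LQR_boyd} is empty. For example, $n=m=1$, $A=-1$, $B=R=1$, $Q=0$ gives $P^*=0$ and an empty strict primal, while the strict dual has value $0$ (take $K=0$, $Y>1/2$). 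The strict statement is therefore false without an extra hypothesis.

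Your closure argument needs both $P^*\succ 0$ and an explicit interior point. The hypothesis holds, e.g., if $Q\succ 0$ or $(A,\sqrt{Q})$ is observable, and the paper tacitly assumes it when it calls $P^*$ positive definite. Under it, take $P_\varepsilon=P^*-\varepsilon\Delta_0$ with $A_{K^*}^\top\Delta_0+\Delta_0A_{K^*}=-I$. Its Schur complement is $\varepsilon I-\varepsilon^2\Delta_0BR^{-1}B^\top\Delta_0\succ 0$, and $P_\varepsilon\succ 0$ for small $\varepsilon$. Convex combinations with any non-strictly feasible $P$ then show that the two suprema coincide.

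On the dual side you need no interior argument in the nonconvex $(K,Y)$ coordinates. The map $Y\mapsto(1+\varepsilon)Y$ turns $A_KY+YA_K^\top+I\preceq 0$ into $\preceq-\varepsilon I$ and only scales the objective by $1+\varepsilon$.
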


Observe that this theorem verifies that the optimal solution of the LQR problem is a static, linear state-feedback controller. 

\begin{remark}
    The problems \eqref{math:LQR_feron} and \eqref{math:LQR_boyd} can be transformed into a semidefinite program (SDP), i.e., by replacing $\succ,\prec$ with $\succeq, \preceq$, without altering the optimal solution, allowing them to be solved efficiently.
\end{remark}

\begin{remark}
    \label{remark:I_n} 
    Since the optimal feedback $K^*$ is independent of the initial state $x_0$, one may, without loss of generality, replace $x_0 x_0^\top $ by $ I_n$ in \eqref{math:Lyap2}, \eqref{math:boyd_obj} and \eqref{math:LQR_feron_stab} to obtain initial-state independent formulations (see \cite[Sec.~3.3]{bu2020clqr} for details). We adapt these formulations throughout the paper.
\end{remark}

\subsection{Policy Gradient Flow for LQR}
\label{subsec:PO_LQR} 
We present the policy gradient flow from \cite{bu2020clqr}. The set of Hurwitz stable feedback gains of system \eqref{math:lin_sys} is denoted by 
\begin{align}
   \mathcal{K} & = \{ K \in \mathbb{R}^{m \times n} \mid \operatorname{Re}(\lambda_i(A-BK)) < 0 ~ \forall i \}. \label{math:K}
\end{align}
and is open, unbounded, and path-connected \cite[Section 3]{bu2019}.
    The LQR cost function parametrized in terms of $K$ is defined as the matrix function $f_K:\mathbb{R}^{m\times n}\to \mathbb{R}$
    \begin{align}
     K\mapsto f_K= \operatorname{tr }(P_K), \label{math:cost_K} 
    \end{align} 
    where $P_K$ is defined by \eqref{math:int_P}.
The function $f_K$ exhibits favorable analytical properties \cite{bu2020clqr} for the well-definedness of its gradient flow.
Its effective domain has interior $\mathcal{K}$, over which $f_K$ is real analytic, coercive and hence admits compact sublevel sets. Moreover, $f_K$ is gradient dominated on each of its sublevel sets.
The policy gradient flow of \eqref{math:cost_K} is 
\begin{subequations}
    \label{math:grad_all} 
 \begin{align}
   \dot K &= -  \nabla f_K,  \quad K(0)  = K_0\in\mathcal{K},  \label{math:grad}\\
   \nabla f_K & = 2 \left( R K - B^\top P_K \right) Y_K, \label{math:grad_mod} 
\end{align}
\end{subequations}
where $Y_K$ and $P_K$ are the solutions to the Lyapunov equations \eqref{math:Lyap1} and \eqref{math:Lyap2}, respectively, with $x_0 x_0^\top$ replaced by $I_n$.
The gradient flow \eqref{math:grad_all} is well defined for $K\in\mathcal{K}$ and generates unique trajectories $K(t)$ within $\mathcal{K}$ that converge to $K^*$.

\subsection{Policy Iteration and Integral Reinforcement Learning}
We briefly present the PI and the IRL formulation for the continuous-time  LQR,  see \cite[Chapters~2--4]{vrabie2012optimal} for details.
For a stabilizing policy $u=\mu(x)$, the value function is defined as
\begin{align}
V^\mu (x_0)=\int_t^\infty x^\top Q x + \mu^\top(x) R \mu(x)\, dt <\infty  \label{math:value}
\end{align}
and satisfies the continuous-time Bellman equation
\begin{align}
   0 = \underbrace{x^\top  Q x + \mu^\top(x) R \mu(x) + \left(\nabla_x V^\mu(x)\right)^\top\dot x}_{\eqqcolon H(x,\mu(x),\nabla_x V^\mu(x))} \label{math:BE}
\end{align}
where $V^\mu(0)=0$. 
The optimal value function $V^*(x)$ solves the Hamilton-Jacobi-Bellman (HJB) equation
\begin{align}
0 = \inf_{\mu} H\bigl(x,\mu(x),\nabla_x V^*(x)\bigr),
\label{math:HJB}
\end{align}
which reduces to the CARE in the LQR setting.
\begin{algorithm}[t]
\caption{Policy Iteration \cite[Algorithm 4.1]{vrabie2012optimal}}
\label{alg:PI}
\KwIn{Stabilizing policy $\mu_0$ }
\KwOut{Optimal policy $\mu^*$}
$k=0$ \\
\While{$\mu_k \ne \mu_{k+1}$}{
  Policy evaluation: Solve Bellman equation for $\nabla_x V^{\mu_k}$
   \begin{align}
     \!\!\!   0 &  =  x^\top  Q x +  \mu_k^\top R \mu_k +  \left(\nabla_x V^{\mu_k}\right)^\top \!(Ax \!  + \! B \mu_k). \label{math:RL_eval} 
   \end{align}\\
  Policy improvement: Update the policy using 
  \begin{align}
      \mu_{k+1} &= \arg \min_{\mu} H(x,\mu(x),\nabla_x V^{\mu_k}(x)), \label{math:pol_imp}  \\
      & = - \tfrac{1}{2} R^{-1} B^\top \nabla_x V^{\mu_k}(x). \label{math:control}
   \end{align} \\
  $k=k+1$
}
\KwRet $\mu_k$
\end{algorithm}
Algorithm~\ref{alg:PI} summarizes PI for solving the nonlinear HJB equation \eqref{math:HJB}.
For the LQR problem, PI reduces to Kleinman's algorithm \cite{kleinman1968iterative}, obtained under the value function $V=x^\top Px$ and the policy $u=-Kx$. Given a stabilizing initial gain $K_0$, 
it replaces \eqref{math:RL_eval} and \eqref{math:control} in the PI by%
\begin{subequations}
    \label{math:kleinman} 
    \begin{align}
 \left(A-BK_k  \right)^{\!\top} \! \!  P_k + P_k \left(A-BK_k \right) &= -K_k^\top R K_k - Q , \label{math:P_eva}\\
 K_{k+1} & = R^{-1} B^\top P_k. \label{math:P_imp}
\end{align} 
\end{subequations}
\begin{theorem}[\texorpdfstring{\cite{kleinman1968iterative}}{}]
    \label{theo:PI} 
    Let $K_0\in\mathcal{K}$. For the iterates $K_k$ and $P_k$ of the Kleinman Algorithm \eqref{math:kleinman}, 
    $A-BK_k$ is Hurwitz, $P^*\leq P_{k+1} \leq P_k$, $\lim_{k\to\infty }K_k = K^*$ and $\lim_{k\to\infty } P_k = P^*$.
\end{theorem}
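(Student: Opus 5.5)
The plan is to run the classical Kleinman argument: an induction on $k$ built from one algebraic identity, followed by a monotone-convergence step. Throughout, write $A_k \coloneqq A-BK_k$.

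\textbf{Step 1: the key identity.} Suppose $A_k$ is Hurwitz, so that \eqref{math:P_eva} has a unique solution $P_k\succeq 0$. Set $K_{k+1}=R^{-1}B^\top P_k$ as in \eqref{math:P_imp}. Substituting $A_k = A_{k+1} + B(K_{k+1}-K_k)$ into \eqref{math:P_eva} and completing the square with $B^\top P_k = RK_{k+1}$ gives
\begin{align*}
A_{k+1}^\top P_k + P_k A_{k+1} = -Q - K_{k+1}^\top R K_{k+1} - (K_{k+1}-K_k)^\top R (K_{k+1}-K_k).
\end{align*}

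\textbf{Step 2: stability of the next gain.} The identity shows that $V(x)=x^\top P_k x$ is a Lyapunov function for $A_{k+1}$, with $\dot V \le -x^\top(Q+K_{k+1}^\top R K_{k+1})x$. To conclude that $A_{k+1}$ is Hurwitz, I would take an eigenpair $A_{k+1}v=\lambda v$ with $\operatorname{Re}\lambda\ge 0$ and multiply the identity by $v^*$ and $v$. The left side equals $2\operatorname{Re}(\lambda)\, v^*P_kv \ge 0$, while the right side is $\le 0$. Hence both sides vanish, which forces $\sqrt{Q}v=0$ and $K_{k+1}v=0$. Then $Av = A_{k+1}v + BK_{k+1}v = \lambda v$ with $\sqrt{Q}v=0$ and $\operatorname{Re}\lambda\ge0$, contradicting detectability of $(A,\sqrt{Q})$ from Assumption~\ref{ass:1}. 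This is the step I expect to be the main obstacle. When $Q$ is only semidefinite, $P_k$ need not be positive definite, and the argument must go through detectability (a PBH-type test) rather than a plain Lyapunov inequality. By induction from $K_0\in\mathcal{K}$, every $A_k$ is Hurwitz.

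\textbf{Step 3: monotonicity and the lower bound.} Subtract the Lyapunov equation \eqref{math:P_eva} at index $k+1$ from the identity of Step 1. This gives
\begin{align*}
A_{k+1}^\top (P_k-P_{k+1}) + (P_k-P_{k+1})A_{k+1} = -(K_{k+1}-K_k)^\top R(K_{k+1}-K_k) \preceq 0.
\end{align*}
Since $A_{k+1}$ is Hurwitz, the integral representation \eqref{math:int_P} yields $P_k\succeq P_{k+1}$.

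For the lower bound, rewrite the CARE \eqref{math:CARE} in closed-loop form using $K^*=R^{-1}B^\top P^*$:
\begin{align*}
A_{k}^\top P^* + P^*A_{k} = -Q-K_{k}^\top RK_{k} + (K_{k}-K^*)^\top R(K_{k}-K^*).
\end{align*}
Subtracting this from \eqref{math:P_eva} gives
\begin{align*}
A_{k}^\top(P_{k}-P^*)+(P_{k}-P^*)A_{k} = -(K_{k}-K^*)^\top R(K_{k}-K^*),
\end{align*}
and therefore $P_{k}\succeq P^*$ for every $k$, in particular for $k+1$.

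\textbf{Step 4: convergence.} The sequence $\{P_k\}$ is monotonically nonincreasing in the Loewner order and bounded below by $P^*$. Applying scalar monotone convergence to $x^\top P_k x$ for every $x$ (and using polarization) gives a limit $P_\infty\succeq P^*$. Consequently $K_k = R^{-1}B^\top P_{k-1}\to K_\infty = R^{-1}B^\top P_\infty$. Passing to the limit in \eqref{math:P_eva} shows that $P_\infty$ solves the CARE \eqref{math:CARE}.

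To identify $P_\infty$ with $P^*$, note that $P_\infty\succeq P^*\succ 0$ is a positive semidefinite solution of the CARE. I would then invoke uniqueness of the stabilizing (equivalently, the positive semidefinite under Assumption~\ref{ass:1}) CARE solution from \cite[Th.~5]{kuvcera1973review}, which gives $P_\infty=P^*$ and hence $K_k\to K^*$.
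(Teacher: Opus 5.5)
Your proof is correct, and it is the classical Kleinman argument: the completion-of-squares identity, detectability via the PBH test for stability of the next gain, Lyapunov comparisons for monotonicity and the lower bound, and monotone convergence plus uniqueness of the positive semidefinite CARE solution. The paper gives no proof of its own and relies on its citation of Kleinman; one harmless nit is that under mere detectability $P^*$ is only guaranteed to satisfy $P^*\succeq 0$, not $P^*\succ 0$, but your last step only needs $P_\infty\succeq 0$, which already follows from $P_k\succeq 0$.
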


To avoid requiring knowledge of the system dynamics in the policy evaluation step, 
 \eqref{math:P_eva} can be replaced by its IRL counterpart \cite{JIANG20122699}.
Substituting the open-loop dynamics $\dot x = Ax + Bu = A_{K_k} x + B(K_k x + u)$ into the time derivative of the value function $V=x^\top P_k x$ results in
\begin{align}
 \dot V & 
 = x^\top \! \left(A^\top P _k  + P_k A  \right)x + 2 u^\top B^\top P_k x  \label{math:V_dot}\\
 & = x^\top \! \left(A_{K_k}^\top P_k + P_k A_{K_k}  \right)x + 2 (u+K_kx)^\top B^\top P_k x  \notag  \\
 &  \overset{\eqref{math:Lyap1}}{=} \! x^\top \!\left(-Q - K_k^\top R K_k  \right) x +   2 (u+K_kx)^\top B^\top P_k x. \notag
\end{align} 
Integrating this equation over a time interval $\delta>0$ yields 
\begin{align}
   & x^\top(t+\delta) P_k x(t+\delta) - x^\top(t) P_k x(t) \label{math:off_pol_eval}  \\
  &= \! - \! \int_t^{t+\delta} \! \! \! \! \! \! \! x^\top \!\left(Q \! + \! K_k^\top R K_k  \right)x\mathrm{d}\tau  \!+\! 2 \int_t^{t+\delta } \! \!\!\!\!\!\! \left(u+K_kx \right)^\top \! B^\top \!P_k  x \mathrm{d}\tau, \notag
\end{align}
which enables evaluating different policies $K$ using the same off-policy data\footnote{IRL policy evaluation was first studied in the on-policy setting \cite{VRABIE2009477}.}. Equation \eqref{math:off_pol_eval} must be evaluated over multiple time intervals to solve the associated least-squares problem for the unknown matrices $P_k$ and $B^\top P_k$.
Substituting $B^\top P_K = R K_{k+1}$ directly provides the improved policy $K_{k+1}$. 

\subsection{Riccati Flow and its Value Iteration}
\label{subsec:value_it} 
We summarize the Riccati flow and its VI as presented in \cite{BIAN2016348}.
The differential matrix Riccati equation 
\begin{align}
\dot P = A^\top P + P A - PB R^{-1} B^\top P + Q,\quad P(0)\in\mathbb{S}_+^n. \label{math:CARE_flow} 
\end{align} 
is referred to as the Riccati flow, see \cite{kuvcera1973review} for  analysis.

\begin{theorem}[\texorpdfstring{\cite[Section 2.3]{BIAN2016348}}{}]
    \label{theo:ric_flow} 
   The trajectory $P(t)$ generated by \eqref{math:CARE_flow} converges to $P^*$ for $t\to\infty$. Moreover, if $P(\tau)\succ 0$ for some $\tau\geq 0$, then $P(t)\succ 0$ for all $t\geq \tau$.
\end{theorem}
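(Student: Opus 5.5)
The plan is to interpret the Riccati flow \eqref{math:CARE_flow} as a finite-horizon LQR value recursion run forward in time, which gives both monotonicity and boundedness of $P(t)$. Let $P_0\coloneqq P(0)\in\mathbb{S}^n_+$. For $t\geq 0$, $P(t)$ is the Riccati matrix of the finite-horizon problem
\begin{align*}
x_0^\top P(t)x_0=\min_u \int_0^t x^\top Qx+u^\top Ru\,\mathrm{d}\tau + x(t)^\top P_0 x(t),
\end{align*}
subject to \eqref{math:lin_sys}. This follows from the standard completion-of-squares argument with the time-reversed matrix $\Pi(\tau)=P(t-\tau)$, which satisfies the backward Riccati differential equation with terminal value $P_0$. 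The same identity shows that the solution exists globally.
\begin{itemize}
\item It does not blow up in finite time, since the value is bounded above by the cost of any fixed control, for instance $u=-K_0x$ with $K_0\in\mathcal{K}$ (Assumption~\ref{ass:1}).
\item It stays positive semidefinite, since every term in the cost is nonnegative.
\end{itemize}

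Next I establish upper and lower bounds.
\begin{itemize}
\item \emph{Upper bound.} Using the stabilizing feedback $K_0$ over $[0,t]$ gives
\begin{align*}
x_0^\top P(t)x_0\le x_0^\top\Big(\int_0^\infty e^{A_{K_0}^\top s}(Q+K_0^\top RK_0)e^{A_{K_0}s}\,\mathrm{d}s\Big)x_0+\|P_0\|\,\|e^{A_{K_0}t}x_0\|^2 .
\end{align*}
Hence $P(t)$ is uniformly bounded.
\item \emph{Lower bound.} Dropping the terminal term shows $P(t)\succeq \Pi_0(t)$, where $\Pi_0$ is the solution starting from $0$.
\end{itemize}

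To get convergence I sandwich $P(t)$ between two comparison solutions.
\begin{itemize}
\item \emph{From below.} The solution $\Pi_0(t)$ is monotonically nondecreasing in $t$, because a longer horizon with zero terminal cost adds nonnegative cost. It is bounded, so it converges to some $\bar P\succeq 0$. Since $\dot\Pi_0$ converges, the limit must be zero, so $\bar P$ solves the CARE \eqref{math:CARE}. Optimality of the infinite-horizon problem gives $\bar P\preceq P^*$. Conversely, the feedback $\bar K=R^{-1}B^\top \bar P$ must be stabilizing by detectability of $(A,\sqrt Q)$, which forces $\bar P=P^*$ by uniqueness of the stabilizing solution \cite[Th.~5]{kuvcera1973review}.
\item \emph{From above.} I use the solution $\Pi_c(t)$ starting from $cI\succeq P_0$, together with the comparison principle $P_0\preceq cI\Rightarrow P(t)\preceq \Pi_c(t)$, which is monotonicity of the value in the terminal weight. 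To show $\Pi_c(t)\to P^*$, I evaluate the cost of the optimal feedback $K^*$, giving
\begin{align*}
x_0^\top\Pi_c(t)x_0\le x_0^\top P^*x_0+c\,\|e^{A_{K^*}t}x_0\|^2 ,
\end{align*}
where the last term tends to zero because $A_{K^*}$ is Hurwitz.
\end{itemize}
Together with $\Pi_0\preceq P\preceq\Pi_c$, this yields $P(t)\to P^*$.

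The main obstacle is the step showing that the limit of $\Pi_0$ is the stabilizing solution when $Q$ is only semidefinite. I would handle it as follows.
\begin{itemize}
\item Take the limit in $x_0^\top\Pi_0(t)x_0=\int_0^t(\cdots)$ along optimal trajectories to get $\int_0^\infty x^\top Qx+u^\top Ru<\infty$.
\item Detectability then forces $x\to0$ along the limiting closed loop $A-BR^{-1}B^\top\bar P$, since an unstable mode would be observable through $Q$ and would make this integral infinite.
\end{itemize}

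Finally, the positivity claim: suppose $P(\tau)\succ0$. For $t\ge\tau$, the same value interpretation started at time $\tau$ gives
\begin{align*}
x_0^\top P(t)x_0=\min_u\int_0^{t-\tau}(\cdots)\,\mathrm{d}s+x(t-\tau)^\top P(\tau)x(t-\tau).
\end{align*}
If this were zero for some $x_0\ne0$, then $u\equiv0$ on $[0,t-\tau]$, since $R\succ0$ and the cost is zero. The trajectory would then be $x=e^{As}x_0$, and the terminal term would vanish, so $e^{A(t-\tau)}x_0=0$. Since $e^{A(t-\tau)}$ is invertible, this forces $x_0=0$, a contradiction. Hence $P(t)\succ0$ for all $t\ge\tau$.
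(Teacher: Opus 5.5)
Your proof is correct. The paper gives no argument of its own for this statement; it imports it from \cite[Section~2.3]{BIAN2016348}, so there is nothing in the paper to match step by step.

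What you supply is a self-contained version of the classical optimal-control proof. You read $P(t)$ as the value of the horizon-$t$ LQR problem with terminal weight $P(0)$, and get global existence and uniform boundedness from a stabilizing gain. You then squeeze $P(t)$ between the monotone solution $\Pi_0(t)$ started at $0$, whose limit solves \eqref{math:CARE}, and an upper bound tending to $P^*$. Persistence of positivity follows from the same value representation restarted at $\tau$, together with invertibility of $e^{A(t-\tau)}$.

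Compared with the bare citation, your route buys three things. It makes explicit how Assumption~\ref{ass:1} enters: a stabilizing gain for boundedness, and detectability to identify $\lim_{t\to\infty}\Pi_0(t)$ with the stabilizing solution rather than a smaller positive semidefinite CARE solution. It shows that the positivity claim does not depend on Assumption~\ref{ass:1} at all. It also yields an exponential upper bound on the gap $P(t)-P^*$. The citation buys only brevity for a textbook fact.

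Two steps can be streamlined. First, the step you flag as the main obstacle is settled in one line. Write the CARE for $\bar P\succeq 0$ in closed-loop form $(A-B\bar K)^\top\bar P+\bar P(A-B\bar K)=-Q-\bar K^\top R\bar K$ and test it with an eigenvector $(A-B\bar K)v=\lambda v$, $v\neq 0$, $\operatorname{Re}\lambda\geq 0$. This gives $2\operatorname{Re}(\lambda)\,v^{\mathsf H}\bar P v=-\Vert\sqrt{Q}v\Vert^2-\Vert R^{1/2}\bar K v\Vert^2$. Hence $\sqrt{Q}v=0$, $\bar K v=0$ and $Av=\lambda v$, contradicting detectability. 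Alternatively, cite uniqueness of the positive semidefinite CARE solution under Assumption~\ref{ass:1} \cite[Th.~5]{kuvcera1973review}.

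Your trajectory-based sketch of this step is roundabout. The finite-horizon optimal feedbacks change with the horizon, so passing to the limit needs a continuity-plus-Fatou argument. As phrased, it also only considers modes seen through $Q$. The relevant dichotomy is between unstable closed-loop modes seen by $\sqrt{Q}$ or $\bar K$, which make the integral infinite, and those seen by neither, which are unstable unobservable modes of $(A,\sqrt{Q})$.

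Second, the comparison solution $\Pi_c$ is redundant. Inserting $u=-K^*x$ directly into the value representation of $P(t)$ gives $x_0^\top P(t)x_0\leq x_0^\top P^*x_0+\Vert P(0)\Vert_2\,\Vert e^{A_{K^*}t}x_0\Vert^2$.
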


To circumvent the analytical and numerical challenges in solving \eqref{math:CARE_flow}, 
we consider a VI scheme based on stochastic approximation.
Let $\{\varepsilon_k \}_k^\infty$ be a step size sequence satisfying
\begin{align}
 \varepsilon_k >0, \quad \sum_{k=0}^\infty \varepsilon_k = \infty \quad \text{and} \quad \sum_{k=0}^\infty \varepsilon_k^2 < \infty. \label{math:vareps} 
\end{align} 
Additionally, let $\{B_q\}_{q=0}^{\infty}$  be a sequence of bounded sets with nonempty interiors such that
\begin{align}
 B_q \subseteq B_{q+1}, \quad q\in\mathbb{N}_0, \quad \lim_{q\to\infty} B_q = \mathbb{S}_+^n. \label{math:B_q} 
\end{align} 
The resulting VI is presented in Algorithm \ref{alg:VI}.

\begin{algorithm}
\caption{Value Iteration \cite[Algorithm 1]{BIAN2016348}}
\label{alg:VI}
\KwIn{$P_0\succeq 0$ }
\KwOut{Optimal value matrix $P^*$}
$k=0,q=0$\\
\While{$P_k \ne P_{k+1}$}{
 $\tilde{P}_{k+1} = P_k + \varepsilon_k \! \left(A^\top P_k + P_k A - P_k B R^{-1} B^\top P_k + Q  \right)$ \\
 \eIf{$\tilde{P}_{k+1}\notin B_q$}{
    $P_{k+1} = P_0$, $q = q+1$
 }{
    $P_{k+1} = \tilde{P}_{k+1}$
 }
  $k=k+1$
}
\KwRet $P_k$
\end{algorithm}

\begin{theorem}[\texorpdfstring{\cite[Theorem 3.3]{BIAN2016348}}{}]
    \label{theo:VI} 
    The sequence $\{P_k\}_{k=0}^\infty$ generated by Algorithm \ref{alg:VI} converges to $P^*$ for $k\to\infty$.
\end{theorem}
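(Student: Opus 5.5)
The plan is to view Algorithm~\ref{alg:VI} as a stochastic-approximation (Euler) discretization of the Riccati flow \eqref{math:CARE_flow}, and to use the ODE method with projection/resetting. Write the iteration as $\tilde P_{k+1}=P_k+\varepsilon_k\,\mathcal{R}(P_k)$ with $\mathcal{R}(P)\coloneqq A^\top P+PA-PBR^{-1}B^\top P+Q$. This map is locally Lipschitz, since it is quadratic in $P$. By Theorem~\ref{theo:ric_flow}, $P^*$ is an asymptotically stable equilibrium of $\dot P=\mathcal{R}(P)$, and every trajectory starting in $\mathbb{S}_+^n$ converges to it. So $\mathbb{S}_+^n$ lies in its domain of attraction.

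The first step is to show that the resets happen only finitely often, so that $q$ eventually stays constant. Fix $P_0$ and let $\Gamma$ be the flow trajectory from $P_0$. It is bounded, because it converges, and it stays in $\mathbb{S}_+^n$ by the standard monotonicity of the Riccati flow, which can be seen from the comparison $P(t)$ equal to the finite-horizon cost. By \eqref{math:B_q} there is a $q^\star$ such that $B_{q^\star}$ contains a neighborhood of the closure of $\Gamma$. More precisely, we need a compact set $\mathcal{C}\subset \operatorname{int}B_{q^\star}$ that is forward invariant for the flow near $P^*$ and contains $\Gamma$. A sublevel set of a local Lyapunov function $V$ of $P^*$, united with the trajectory tube, will do.

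Next, on any bounded set the Euler polygon with steps $\varepsilon_k\to0$ tracks the exact trajectory over finite horizons. This is the Gronwall estimate combined with local Lipschitzness, and it is where $\varepsilon_k\to0$ is used. Suppose resets occurred infinitely often. Then after $q\ge q^\star$, every restart from $P_0$ at a large index $k$ would follow $\Gamma$ closely for all time, because the tracking error sums like $\sum\varepsilon_j^2$ with the $\sum\varepsilon_k^2<\infty$ condition in \eqref{math:vareps}. Once the iterate enters the attraction neighborhood of $P^*$, the Lyapunov decrease $V(P_{k+1})\le V(P_k)-c\,\varepsilon_k V(P_k)+O(\varepsilon_k^2)$ keeps it inside $\mathcal{C}$. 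So it never leaves $B_{q^\star}$, which is a contradiction.

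After the last reset, the iterates stay in the compact set $\mathcal{C}$. Since $\sum\varepsilon_k=\infty$, the interpolated piecewise-linear process is an asymptotic pseudotrajectory of the flow (Benaïm's theorem, applied here with no noise). Its limit set is therefore internally chain transitive, which forces it to be $\{P^*\}$ by the Lyapunov function. Hence $P_k\to P^*$.

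The main obstacle is the second step: making the tracking uniform over infinite time. A finite-horizon Gronwall bound alone is insufficient. I would handle it by combining the finite-horizon bound until entry into a small ball around $P^*$ with the local Lyapunov contraction afterwards. For the contraction, the linearization $A_{K^*}^\top X+XA_{K^*}$ is Hurwitz, so a quadratic Lyapunov function of the error $P-P^*$ exists.
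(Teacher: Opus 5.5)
\textbf{Overall route.} The paper gives no argument of its own for this statement; it imports the result from \cite{BIAN2016348}, whose analysis is of stochastic-approximation type (expanding truncations around the Riccati flow \eqref{math:CARE_flow}). Your architecture has the same spirit: first show there are finitely many resets, then show the untruncated Euler recursion converges.

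\textbf{The gap: finiteness of resets.} This step fails as written. Since $B_q\subseteq B_{q+1}$ and $\bigcup_q B_q=\mathbb{S}^n_+$ by \eqref{math:B_q}, every $B_q$ lies in $\mathbb{S}^n_+$, so $\operatorname{int}B_q\subseteq\mathbb{S}^n_{++}$. Hence no $B_{q^\star}$ can contain a neighborhood of $\overline{\Gamma}$ once $\overline{\Gamma}$ meets $\partial\mathbb{S}^n_+$. That happens whenever $P_0$ is singular (Algorithm~\ref{alg:VI} allows $P_0\succeq 0$, and the experiments use $P_0=0$), or when $P^*$ is singular, which is possible under mere detectability. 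In that regime, tracking $\Gamma$ closely does not imply $\tilde P_{k+1}\in B_q$. The flow preserves $\mathbb{S}^n_+$, but the Euler map $P\mapsto P+\varepsilon\mathcal{R}(P)$ need not, and a single exit from the cone triggers a reset no matter how small the tracking error is. Your argument never controls this.

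\textbf{Counterexample.} This is not a technicality. Take $A=\left[\begin{smallmatrix}0&1\\0&0\end{smallmatrix}\right]$, $B=\left[\begin{smallmatrix}0\\1\end{smallmatrix}\right]$, $Q=\operatorname{diag}(1,0)$, $R=1$ and $P_0=0$; Assumption~\ref{ass:1} holds, and $(A,\sqrt{Q})$ is even observable.
\begin{itemize}
\item $\mathcal{R}(0)=Q$ and $\mathcal{R}(\varepsilon Q)=\left[\begin{smallmatrix}1&\varepsilon\\ \varepsilon&0\end{smallmatrix}\right]$.
\item So two steps after any reset, the candidate iterate has determinant $-\varepsilon_k^2\varepsilon_{k+1}^2<0$ and is rejected.
\item Resets therefore occur infinitely often, and $P_k\to 0\neq P^*=\left[\begin{smallmatrix}\sqrt{2}&1\\1&\sqrt{2}\end{smallmatrix}\right]$.
\end{itemize}
The statement as transcribed therefore needs an extra hypothesis, and your proof needs a separate cone-invariance step for the \emph{discrete} iterates.

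\textbf{Possible repairs.} Either of the following works.
\begin{itemize}
\item Assume $P_0\succ 0$ and $(A,\sqrt{Q})$ observable. Then $\overline{\Gamma}$ is a compact subset of $\mathbb{S}^n_{++}$ by Theorem~\ref{theo:ric_flow} and $P^*\succ 0$, and your tube argument goes through verbatim.
\item Assume $Q\succ 0$. For $P\succeq 0$ with $\|P\|\le M$ and $\varepsilon$ small, one checks $P+\varepsilon\mathcal{R}(P)\succeq\bigl(\varepsilon\lambda_{\min}(Q)-C_M\varepsilon^2\bigr)I\succ 0$. Small steps then keep the iterates in the cone, and you only need a \emph{relative} neighborhood of $\overline{\Gamma}$ in $\mathbb{S}^n_+$ inside $B_{q^\star}$.
\end{itemize}
You also need a regularity condition on the sets, e.g.\ $B_q=\{P\succeq 0:\|P\|\le M_q\}$ with $M_q\uparrow\infty$, as in the experiments. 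An increasing union equal to $\mathbb{S}^n_+$ does not by itself give a $q^\star$ with $B_{q^\star}$ containing a prescribed compact set.

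\textbf{Minor point.} In the last step, a merely local Lyapunov function does not by itself exclude internally chain transitive limit sets elsewhere. Conclude instead from the fact that such a set meeting the basin of the attractor $\{P^*\}$ is contained in it.
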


\subsection{Data-based Closed-loop Representation}
\label{subsec:persis} 

We briefly recall the data-based closed-loop representation in continuous time from \cite{de2019formulas}. 
Consider a sequence of $T$ measurements of state, input, and state derivative trajectories of system \eqref{math:lin_sys}, sampled at interval $\Delta>0$,
\begin{align}
   X &= \begin{bmatrix} x(0) & x(\Delta) & \dots & x((T-1)\Delta) \end{bmatrix}\in\mathbb{R}^{n\times T},\\
   U & =\begin{bmatrix} u(0) & u(\Delta) & \dots & u((T-1)\Delta)  \end{bmatrix}\in\mathbb{R}^{m\times T}, \\
   \dot{X} & = \begin{bmatrix} \dot x(0) & \dot x(\Delta) & \dots & \dot x((T-1)\Delta) \end{bmatrix}\in\mathbb{R}^{n\times T}.
   \end{align}
   that satisfy%
   \begin{align}
    \dot{X} & = A X + BU= \begin{bmatrix} B & A \end{bmatrix} \begin{bmatrix} U \\ X \end{bmatrix}. \label{math:ex_sys} 
   \end{align}

\begin{theorem}[\texorpdfstring{ \cite[Theorem 2 \& Remark 2]{de2019formulas}}{}]
   \label{theo:1}
   Let $ \operatorname{rank} \left[\begin{smallmatrix}U \\ X\end{smallmatrix}\right]  = n+m$ hold. Then, the closed-loop system $\dot x = A_K x$ can equivalently be represented using data as%
   \begin{subequations}
    \label{math:data_sys}
    \begin{align}
      \dot x & = \dot{X} G_K x, \\
      \begin{bmatrix} -K \\ I_n \end{bmatrix} & = \begin{bmatrix} U \\ X \end{bmatrix} G_K, \label{math:eq1}
   \end{align}
   \end{subequations}
   where $G_K\in\mathbb{R}^{T \times n}$.
\end{theorem}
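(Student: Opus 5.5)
The argument has two directions, and both rest on the data identity \eqref{math:ex_sys} combined with the rank condition. The aim is to show that, for every gain $K$, the closed-loop matrix $A_K = A-BK$ admits the representation $A_K = \dot X G_K$ for some $G_K$ satisfying \eqref{math:eq1}. Conversely, any matrix $G$ satisfying the constraint in \eqref{math:eq1} for some $K$ must yield $\dot X G = A-BK$. Together these show that \eqref{math:data_sys} describes exactly the closed-loop system $\dot x = A_K x$.

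The first step is existence. Since $\left[\begin{smallmatrix}U\\ X\end{smallmatrix}\right]\in\mathbb{R}^{(n+m)\times T}$ has full row rank $n+m$, its image is all of $\mathbb{R}^{n+m}$. The linear equation \eqref{math:eq1} is therefore solvable for any right-hand side $\left[\begin{smallmatrix}-K\\ I_n\end{smallmatrix}\right]$. An explicit solution is $G_K = \left[\begin{smallmatrix}U\\ X\end{smallmatrix}\right]^\dagger \left[\begin{smallmatrix}-K\\ I_n\end{smallmatrix}\right]$, using the right-inverse from the Notation section; the general solution adds any element of $\ker\left[\begin{smallmatrix}U\\ X\end{smallmatrix}\right]$ column-wise.

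The second step is the computation. For any $G_K$ satisfying \eqref{math:eq1}, I would use \eqref{math:ex_sys} to write
\begin{align*}
\dot X G_K = \begin{bmatrix} B & A\end{bmatrix}\begin{bmatrix}U\\ X\end{bmatrix} G_K = \begin{bmatrix} B & A\end{bmatrix}\begin{bmatrix}-K\\ I_n\end{bmatrix} = A - BK = A_K .
\end{align*}
This shows that $\dot x = \dot X G_K x$ coincides with $\dot x = A_K x$. The calculation also covers the converse direction: any pair $(K,G)$ satisfying \eqref{math:eq1} gives a closed-loop matrix of the true system, and $K$ is recovered as $K = -UG$.

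There is no serious obstacle here. The only point requiring care is to state clearly that the rank condition is used solely for solvability, i.e. for the existence of $G_K$ for arbitrary $K$. The identity $\dot X G = A-BK$, by contrast, holds for any solution $G$, independently of the choice made in the non-unique solution set when $T > n+m$.
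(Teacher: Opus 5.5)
Your proposal is correct and is essentially the standard argument behind the cited result, which the paper reuses for Theorem~\ref{theo:closed} and Lemma~\ref{lem:G_K}: full row rank of $\left[\begin{smallmatrix}U\\ X\end{smallmatrix}\right]$ makes \eqref{math:eq1} solvable for every $K$, and substituting \eqref{math:ex_sys} gives $\dot X G_K = \begin{bmatrix} B & A\end{bmatrix}\left[\begin{smallmatrix}-K\\ I_n\end{smallmatrix}\right] = A-BK$ for any solution. Your particular solution correctly uses the right-inverse $M^\top(MM^\top)^{-1}$ with $M=\left[\begin{smallmatrix}U\\ X\end{smallmatrix}\right]$, and you are right that the rank condition is needed only for existence of $G_K$, not for the identity $\dot X G = A-BK$.
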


\section{Data-driven Parameterizations and Their Analytical Properties}
\label{sec:param} 
In this section, we present both parameterizations,  derive their analytical properties, and clarify their relationship.
\subsection{Closed-loop Parameterization}
Inspired by \cite{Lopez2023}, we slightly modify the data-based CL parameterization of \cite{de2019formulas} (see Subsection \ref{subsec:persis}) to remove the requirement for state-derivative measurements, thereby avoiding noise amplification caused by numerical differentiation.
Integrating \eqref{math:lin_sys} over a time interval $\delta>0$ results in
\begin{align}
    x(t+\delta) - x(t) & = A \int_t^{t+\delta}  x(\tau) \mathrm{d}\tau + B \int_t^{t+\delta}  u(\tau) \mathrm{d}\tau. \label{math:sys_int}
\end{align}
Repeating this integration over $T$ distinct time intervals yields
\begin{align}
     \bar{X} =  A \tilde{X} + B \tilde{U} =  \begin{bmatrix}B & A \end{bmatrix} \begin{bmatrix} \tilde{U}\\ \tilde{X} \end{bmatrix},\label{math:ex_sys_int} 
\end{align}
where 
\begin{align}
    \bar{X} &  \! =  \! \begin{bmatrix} x(t_1 \! + \!\delta) \!- \!x(t_1) & \!  \! \!  \dots  \! \! \!  & x(t_T  \! +  \! \delta) \!- \! x(t_T)\end{bmatrix}  \!\in  \!\mathbb{R}^{n\times T},\\
    \tilde{U} &  \!= \! \begin{bmatrix} \int_{t_1}^{t_1+\delta }u(\tau)\mathrm{d}\tau & \dots &\int_{t_T}^{t_T+\delta }u(\tau)\mathrm{d}\tau \end{bmatrix}\in\mathbb{R}^{m\times T},\\
     \tilde{X} & \! =  \! \begin{bmatrix} \int_{t_1}^{t_1+\delta }x(\tau)\mathrm{d}\tau & \dots &\int_{t_T}^{t_T+\delta }x(\tau)\mathrm{d}\tau \end{bmatrix}\in\mathbb{R}^{n\times T}.
\end{align}
The following assumption ensures that the data is informative to characterize the system \eqref{math:lin_sys} and holds throughout the paper.
\begin{assumption}
    \label{ass:2} 
    The matrix $\begin{bmatrix}\tilde{U} \\ \tilde{X}\end{bmatrix}$ has rank $n+m$.
\end{assumption}

By \cite[Lemma~4]{Lopez2023}, Assumption~\ref{ass:2} holds under piecewise-constant inputs that are persistently exciting of order $n+1$, which requires $T\geq (m+1)n+m$. Next, we adapt Theorem \ref{theo:1} to the integrated dynamics \eqref{math:sys_int}.

\begin{theorem}
    \label{theo:closed} 
     The closed-loop system $\dot x = A_K x$ can equivalently be represented as%
\begin{subequations}
    \label{math:sys_closed} 
    \begin{align}
 \dot x &= \bar{X} G x,  \\
 \begin{bmatrix}-K \\ I_n \end{bmatrix}& = \begin{bmatrix}\tilde{U} \\ \tilde{X} \end{bmatrix} G, \label{math:data_c}
\end{align} 
\end{subequations}
where $G\in\mathbb{R}^{T\times n}$.
\end{theorem}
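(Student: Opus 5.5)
We mirror the argument behind Theorem~\ref{theo:1}, with $(\dot X, U, X)$ replaced by the integrated data $(\bar X, \tilde U, \tilde X)$. The only facts needed are the data identity \eqref{math:ex_sys_int} and the rank condition of Assumption~\ref{ass:2}. The statement has two directions. First, every $K$ admits a matrix $G$ satisfying \eqref{math:data_c}. Second, for any such $G$ the data-based vector field $\bar X G x$ coincides with $A_K x$.

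\emph{Existence of $G$.} By Assumption~\ref{ass:2}, the matrix $\left[\begin{smallmatrix}\tilde U\\ \tilde X\end{smallmatrix}\right]\in\mathbb{R}^{(n+m)\times T}$ has full row rank $n+m$. Its image is therefore all of $\mathbb{R}^{n+m}$. Hence, for any $K\in\mathbb{R}^{m\times n}$, each column of $\left[\begin{smallmatrix}-K\\ I_n\end{smallmatrix}\right]$ lies in this image, and some $G\in\mathbb{R}^{T\times n}$ solves \eqref{math:data_c}. A concrete choice is
\[
G=\begin{bmatrix}\tilde U\\ \tilde X\end{bmatrix}^\dagger\begin{bmatrix}-K\\ I_n\end{bmatrix},
\]
using the right-inverse. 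In general $G$ is not unique; one may add any element of $\ker\left[\begin{smallmatrix}\tilde U\\ \tilde X\end{smallmatrix}\right]$ to each column.

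\emph{Equivalence of the dynamics.} Take any $G$ satisfying \eqref{math:data_c}. Using \eqref{math:ex_sys_int},
\[
\bar X G=\begin{bmatrix}B & A\end{bmatrix}\begin{bmatrix}\tilde U\\ \tilde X\end{bmatrix}G=\begin{bmatrix}B & A\end{bmatrix}\begin{bmatrix}-K\\ I_n\end{bmatrix}=A-BK=A_K .
\]
Thus $\dot x=\bar X G x$ is exactly $\dot x=A_Kx$, whichever solution $G$ is chosen.

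\emph{Converse direction.} Suppose a pair $(K,G)$ satisfies \eqref{math:data_c}. Then $K$ is recovered as $K=-\tilde U G$, and the computation above shows that $\bar X G$ is the closed-loop matrix for that $K$. The set of data-based closed loops is therefore exactly the set of model-based closed loops $\{A-BK\}$.

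\emph{Main point to verify.} The only step needing care is that \eqref{math:ex_sys_int} holds exactly, with no derivative data. Each column of \eqref{math:ex_sys_int} follows from integrating \eqref{math:lin_sys} over $[t_i,t_i+\delta]$, as in \eqref{math:sys_int}. Linearity lets $A$ and $B$ be pulled outside the integrals, so no approximation enters. This is why the derivative-free data can replace $(\dot X,U,X)$ in Theorem~\ref{theo:1} without any further assumption beyond Assumption~\ref{ass:2}.
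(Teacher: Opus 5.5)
Your proposal is correct and follows the paper's approach. The paper's proof only says to repeat the argument of Theorem~\ref{theo:1} with \eqref{math:ex_sys_int} in place of \eqref{math:ex_sys}, and your proposal writes out exactly those implicit details: existence of $G$ from the full row rank in Assumption~\ref{ass:2}, and $\bar X G = \begin{bmatrix} B & A\end{bmatrix}\left[\begin{smallmatrix}\tilde U\\ \tilde X\end{smallmatrix}\right] G = A - BK$ for every solution $G$ of \eqref{math:data_c}.
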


\begin{proof}
    The proof is similar to that of Theorem \ref{theo:1} by using \eqref{math:ex_sys_int} instead of \eqref{math:ex_sys}.
\end{proof}

\begin{remark}
    The matrices $\bar{X}, \tilde{U}, \tilde{X}$ may be replaced by $\dot{X},U,X$, respectively, without altering the subsequent results.
\end{remark}

Let $\mathcal{G}$ denote the set of all $G\in\mathbb{R}^{T\times n}$ satisfying \eqref{math:data_c} and rendering $\bar{X} G$ Hurwitz, i.e., 
\begin{align}
 \mathcal{G} &\coloneqq\{G\in\mathbb{R}^{T\times n} \mid  I_n = \tilde X G, \operatorname{Re}(\lambda_i(\bar{X} G ))<0~ \forall i \}  \label{math:G} \\
 & ~  = \{G\in\mathbb{R}^{T\times n}   \mid   \exists K\in\mathcal{K} \text{ s.t. }  \begin{bmatrix} - K \\ I_n \end{bmatrix} = \begin{bmatrix} \tilde{U} \\ \tilde{X} \end{bmatrix} G\}. \notag
\end{align} 

Analogous to \eqref{math:cost_K}, we define a data-driven LQR cost based on the equivalence representation of Theorem \ref{theo:closed}. 
\begin{definition}
    \label{def:LQR_1} 
    The LQR cost under  the CL parameterization is defined as the matrix function 
    \begin{align}
    f_G:\mathcal{G}\to\mathbb{R}, ~ G\mapsto f_G = \operatorname{tr}\left(P_G  \right), \label{math:f_G}  
    \end{align} 
    where $P_G$ is the solution of the Lyapunov equation
       \begin{align}
     0 & = (\bar{X}G  )^\top P_G + P_G (\bar{X} G ) + Q + (\tilde{U} G  )^\top R (\tilde{U} G ). \label{math:Pol_eval} 
    \end{align} 
\end{definition}

We consider the minimization of the LQR cost \eqref{math:f_G}, i.e., 
\begin{align}
 \min_{G\in\mathcal{G}} f_G, \label{math:data_op1} 
\end{align} 
such that any optimal solution $G^*$ yields the optimal feedback $K^*= - \tilde{U} G^*$. Several solution approaches to \eqref{math:data_op1} are developed in the subsequent sections under different settings. 
Next, we state the analytical properties of this parameterization. 

\begin{lemma}
    \label{lem:G_K} 
    Consider the system \eqref{math:sys_closed}.
    \begin{enumerate}
        \item \label{en_1} Let $T>m+n$. Then, for any $K\in\mathbb{R}^{m\times n}$, the solution $G\in\mathbb{R}^{T\times n}$ to \eqref{math:data_c} is not unique.
        \item The general solution to \eqref{math:data_c} is $G = G^p + N$, where 
        \begin{align}
         G^p =    \begin{bmatrix} \tilde{U} \\ \tilde{X} \end{bmatrix}^\dagger \begin{bmatrix} -K \\ I_n \end{bmatrix} \quad \text{and} ~~ \begin{bmatrix}\tilde{U} \\ \tilde{X}\end{bmatrix} N = 0.
         \label{math:gen_sol} 
        \end{align} 
        \item Each solution $G$ to \eqref{math:data_c} renders $K=-\tilde{U}G$ uniquely. 
      \end{enumerate}
\end{lemma}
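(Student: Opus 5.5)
Write $W \coloneqq \begin{bmatrix}\tilde U \\ \tilde X\end{bmatrix}\in\mathbb{R}^{(m+n)\times T}$. Every part of the lemma is a statement about the linear matrix equation $W G = \begin{bmatrix} -K \\ I_n\end{bmatrix}$. The proof therefore only needs linear algebra together with Assumption~\ref{ass:2}, which gives $\operatorname{rank} W = n+m$. In other words, $W$ has full row rank, and its right-inverse $W^\dagger = W^\top (W W^\top)^{-1}$ is well defined.

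The plan is to prove part 2 first. Fix any $K\in\mathbb{R}^{m\times n}$ and set $G^p = W^\dagger \begin{bmatrix} -K \\ I_n\end{bmatrix}$. Since $W W^\dagger = I_{m+n}$, $G^p$ solves \eqref{math:data_c}, so a solution exists for every $K$. Let $G$ be any solution and put $N \coloneqq G - G^p$. Subtracting the two equations gives $W N = 0$. Conversely, if $WN=0$ then $W(G^p+N) = \begin{bmatrix} -K \\ I_n\end{bmatrix}$. Hence the solution set is exactly the affine set $G^p + \{N : WN = 0\}$, which is \eqref{math:gen_sol}.

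Part 1 then follows from the dimension of this set of homogeneous solutions. By rank--nullity, $\dim\ker W = T - (n+m) > 0$ when $T>m+n$. Pick a nonzero $v\in\ker W$ and any nonzero $c\in\mathbb{R}^{1\times n}$, and let $N = v c \neq 0$. Then $WN = (Wv)c = 0$. So $G^p$ and $G^p+N$ are two distinct solutions, and the solution is not unique for any $K$.

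For part 3, read off the first block row of \eqref{math:data_c}: it states $\tilde U G = -K$, so $K = -\tilde U G$ is determined by $G$. Taking two gains $K_1, K_2$ with the same $G$ gives $-K_1 = \tilde U G = -K_2$, so $K_1=K_2$. Thus the map $G\mapsto K$ is well defined, although by part 1 it is many-to-one.

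No step is a real obstacle. The only point needing care is to use the right-inverse form of $W^\dagger$, which requires full row rank from Assumption~\ref{ass:2}, and to construct an explicit nonzero $N$ of size $T\times n$ (the rank-one matrix $vc$) in part 1.
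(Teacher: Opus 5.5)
Your proof is correct and follows essentially the same route as the paper: rank--nullity on $\left[\begin{smallmatrix}\tilde U \\ \tilde X\end{smallmatrix}\right]$ for part 1, a pseudoinverse particular solution plus the null space for part 2, and reading off the first block row for part 3. Your version is slightly tighter in two places. You establish existence of a solution before claiming non-uniqueness. You also use the correct right-inverse $W^\top (W W^\top)^{-1}$, whereas the paper's proof writes the left-inverse form $(W^\top W)^{-1} W^\top$, which does not exist when $T > m+n$.
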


\begin{proof}  
    Assumption \ref{ass:2} and $T>m+n$ imply that $\left[\begin{smallmatrix}\tilde{U} \\ \tilde{X}\end{smallmatrix}\right]$ has a nontrivial null space. By the rank-nullity theorem, $\dim \ker\left(\left[\begin{smallmatrix}\tilde{U} \\ \tilde{X}\end{smallmatrix}\right]\right) = T - m - n > 0$, implying (1). 
    Since $\left[\begin{smallmatrix}\tilde{U} \\ \tilde{X}\end{smallmatrix}\right]$ has full row rank, its pseudoinverse is $(\left[\begin{smallmatrix}\tilde{U} \\ \tilde{X}\end{smallmatrix}\right]^\top \left[\begin{smallmatrix}\tilde{U} \\ \tilde{X}\end{smallmatrix}\right] )^{-1} \left[\begin{smallmatrix}\tilde{U} \\ \tilde{X}\end{smallmatrix}\right]^\top$. Substituting $G^p$ into \eqref{math:data_c} verifies that $G^p$ is a particular solution. 
    Since $ \operatorname{ker}(\left[\begin{smallmatrix}\tilde{U} \\ \tilde{X}\end{smallmatrix}\right])$ is nontrivial, the general solution is the sum of $G^p$ and an arbitrary $N$ in this null space, proving (2). Uniqueness in (3) follows from the linear map $-K = \tilde UG$ for all solutions $G$.
\end{proof}

Note that Lemma \ref{lem:G_K} implies that distinct matrices $G$ satisfying \eqref{math:gen_sol} yield an identical closed-loop matrix $A_K = \bar{X} G$. 

\begin{remark}
   The particular solution $G^p$ of \eqref{math:gen_sol} is the minimal Frobenius norm solution to \eqref{math:data_c}, i.e.,%
   \begin{align}
      G^p = \arg \min_G \Vert G \Vert_F^2 \quad \text{s.t.} \quad \eqref{math:data_c}.
   \end{align}
\end{remark}

\begin{lemma}
    \label{lemma:pos_s} 
Let $G \in \mathcal{G}$. Then the solution $P_G$ to \eqref{math:Pol_eval} is unique and positive semidefinite. If $Q \succ 0$ or $(A, \sqrt{Q})$ is observable, then $P_G \succ 0$. Furthermore, the image of the function $f_G$ \eqref{math:f_G} satisfies $f_\mathcal{G} = \{f_G \mid G\in\mathcal{G}\}\subseteq \mathbb{R}_{\geq 0}$.
\end{lemma}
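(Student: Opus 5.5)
The plan is to reduce \eqref{math:Pol_eval} to the model-based Lyapunov equation \eqref{math:Lyap1} using Theorem~\ref{theo:closed}, and then invoke standard Lyapunov theory.

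Let $G\in\mathcal{G}$ and set $K\coloneqq-\tilde U G$. By \eqref{math:ex_sys_int}, together with $\tilde X G=I_n$ and $\tilde U G=-K$, we have
\begin{align*}
\bar X G = A\tilde X G + B\tilde U G = A-BK = A_K .
\end{align*}
Moreover, $(\tilde U G)^\top R(\tilde U G)=K^\top R K$. Hence \eqref{math:Pol_eval} coincides with \eqref{math:Lyap1}. Since $G\in\mathcal{G}$, the matrix $A_K=\bar X G$ is Hurwitz, i.e., $K\in\mathcal{K}$.

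\emph{Uniqueness.} The Lyapunov operator $P\mapsto A_K^\top P+PA_K$ has eigenvalues $\lambda_i(A_K)+\lambda_j(A_K)$. All of these have negative real part, so none vanishes and the operator is invertible. Thus $P_G$ exists and is unique, and it is given by the integral \eqref{math:int_P}.

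\emph{Positive semidefiniteness.} The integrand in \eqref{math:int_P} is $e^{A_K^\top t}(Q+K^\top RK)e^{A_K t}\succeq 0$, because $Q\succeq0$ and $R\succ0$. Hence $P_G\succeq 0$, and $P_G$ is symmetric since the transpose of \eqref{math:int_P} is itself.

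\emph{Positive definiteness.} If $Q\succ0$, the integrand is positive definite at $t=0$ and continuous in $t$, so $v^\top P_G v>0$ for every $v\neq0$. For the observable case, suppose $v^\top P_G v=0$. Since the integrand is nonnegative, this forces $\sqrt{Q}\,e^{A_Kt}v=0$ and $R^{1/2}Ke^{A_Kt}v=0$ for all $t\ge0$, hence $Ke^{A_Kt}v=0$. It follows that $x(t)=e^{A_Kt}v$ satisfies $\dot x=A_Kx=Ax-BKx=Ax$, so $x(t)=e^{At}v$ and $\sqrt{Q}e^{At}v\equiv0$. Observability of $(A,\sqrt{Q})$ then gives $v=0$. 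This is the step that needs care: the feedback term vanishing along the trajectory is what allows transferring observability from $(A,\sqrt Q)$ to the closed loop.

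\emph{Image of $f_G$.} Since $P_G\succeq0$, we have $f_G=\operatorname{tr}(P_G)\ge0$, which gives $f_\mathcal{G}\subseteq\mathbb{R}_{\ge0}$.
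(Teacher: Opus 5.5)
Your proof is correct and follows essentially the paper's route: the integral representation of $P_G$ under the Hurwitz matrix $\bar X G$, a positive semidefinite integrand, and a kernel argument for the observable case. Your treatment of that last case is tighter than the paper's, which passes directly from $\sqrt{Q}z=0$ to $z=0$ and never relates $\bar X G$ to $A$; you first identify $\bar X G=A_K$ and extract both $\sqrt{Q}e^{A_Kt}v\equiv 0$ and $Ke^{A_Kt}v\equiv 0$, which gives $e^{A_Kt}v=e^{At}v$, exactly what is needed for observability of $(A,\sqrt{Q})$ to apply.
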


The proof is provided in Appendix \ref{app:proof1}.
   For  $G\in\mathcal{G}$, the solution to 
   \eqref{math:Pol_eval} can be obtained via vectorization 
   \begin{align}
      \begin{split}
         \label{math:vec}
      \operatorname{vec}(P_G ) & = - \bigl(I_n \otimes (\bar{X} G  )^\top +   (\bar{X} G )^\top \otimes I_n  \bigr)^{-1} \\
      & \quad ~\operatorname{vec}\left( Q + (\tilde U G)^\top R  \tilde G \right).
      \end{split}
   \end{align}
However, solving Lyapunov equations via vectorization can be computationally expensive and numerically ill-conditioned. Efficient alternatives are the Bartels--Stewart \cite{bartels1972solution} and the Hessenberg--Schur \cite{Hessenberg1979} algorithm. 
Motivated by \cite[Lemma 2.1]{dorfler2022}, we characterize the optimal solutions to \eqref{math:data_op1}.
\begin{lemma}
    The optimal solution $G^*$ to the problem \eqref{math:data_op1} is not unique and is contained in
    \begin{align}
     \mathcal{G}^* = \{ G\in\mathbb{R}^{T\times n}\mid G = \begin{bmatrix} \tilde{U} \\ \tilde{X} \end{bmatrix}^\dagger \begin{bmatrix} -K^* \\ I_n \end{bmatrix} + N  \} \subset \mathcal{G},
    \end{align} 
    where  $\left[\begin{smallmatrix}\tilde{U} \\ \tilde{X}\end{smallmatrix}\right] N = 0$ and $K^*$ is the optimal LQR gain.
\end{lemma}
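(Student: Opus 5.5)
The plan is to transfer optimality from the model-based problem over $\mathcal{K}$ to the data-based problem over $\mathcal{G}$ through the map $G\mapsto K=-\tilde U G$. First I will show that $f_G$ depends on $G$ only through $K=-\tilde U G$. Take $G\in\mathcal{G}$ and set $K=-\tilde U G$. Combining \eqref{math:ex_sys_int} with $\tilde X G=I_n$ gives
\[
\bar X G=A\tilde X G+B\tilde U G=A-BK=A_K .
\]
Hence $A_K$ is Hurwitz and $K\in\mathcal{K}$. Substituting $\bar XG=A_K$ and $\tilde UG=-K$ into \eqref{math:Pol_eval} turns it into \eqref{math:Lyap1} with $x_0x_0^\top$ replaced by $I_n$, as in Remark~\ref{remark:I_n}. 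Uniqueness of the Lyapunov solution for Hurwitz $A_K$ (Lemma~\ref{lemma:pos_s}) then gives $P_G=P_K$, and therefore $f_G=f_K$ as in \eqref{math:cost_K}.

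Conversely, for every $K\in\mathcal{K}$, Lemma~\ref{lem:G_K} provides solutions $G=G^p+N$ of \eqref{math:data_c}. Each such $G$ lies in $\mathcal{G}$, since $\bar XG=A_K$ is Hurwitz. So the map $G\mapsto-\tilde UG$ sends $\mathcal{G}$ onto $\mathcal{K}$, and $f_G=f_K$ along this correspondence. It follows that
\[
\inf_{G\in\mathcal{G}}f_G=\inf_{K\in\mathcal{K}}f_K ,
\]
and that $G$ is optimal for \eqref{math:data_op1} exactly when $-\tilde UG$ minimizes $f_K$ over $\mathcal{K}$.

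Next I will identify the minimizer over $\mathcal{K}$. From Section~\ref{subsec:PO_LQR}, $f_K=\operatorname{tr}(P_K)$ is minimized on $\mathcal{K}$ by $K^*$. The gradient flow converges to $K^*$, and by gradient dominance every stationary point is a global minimizer. Uniqueness of the minimizer follows from the comparison $P_K\succeq P^*$ for every $K\in\mathcal{K}$, with equality only at $K=K^*$. To prove this comparison, subtract the CARE \eqref{math:CARE} from \eqref{math:Lyap1}:
\[
A_K^\top(P_K-P^*)+(P_K-P^*)A_K+(K-K^*)^\top R(K-K^*)=0 .
\]
Since $A_K$ is Hurwitz, $P_K-P^*\succeq0$. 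Hence $\operatorname{tr}(P_K)=\operatorname{tr}(P^*)$ forces $P_K=P^*$, and then $(K-K^*)^\top R(K-K^*)=0$ gives $K=K^*$ because $R\succ0$. This step, establishing that the minimizer in $K$ is unique, is the main obstacle; the completion-of-squares identity above resolves it.

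Combining the two steps, the optimal set of \eqref{math:data_op1} is exactly the set of solutions of \eqref{math:data_c} with $K=K^*$. By Lemma~\ref{lem:G_K}(2) this set is $\mathcal{G}^*$, and it is contained in $\mathcal{G}$ because $K^*\in\mathcal{K}$. Non-uniqueness follows from Lemma~\ref{lem:G_K}(1) whenever $T>m+n$. In the boundary case $T=m+n$ the optimizer is unique, so the claim of non-uniqueness implicitly assumes $T>m+n$.
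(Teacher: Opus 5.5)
Your proof is correct and takes essentially the same approach as the paper: every $G\in\mathcal{G}^*$ induces the same closed loop $\bar X G = A_{K^*}$, hence the same $P_G$ and the same cost, and optimality then transfers from the model-based problem through the equivalence of Theorem~\ref{theo:closed}. Your version is more complete in two respects: the completion-of-squares identity giving $P_K \succeq P^*$, with equality only at $K=K^*$, supplies the uniqueness of the optimal gain that the inclusion ``optimal set $\subseteq \mathcal{G}^*$'' actually requires (the paper leaves this implicit), and you correctly note that non-uniqueness of $G^*$ needs the hypothesis $T>m+n$ from Lemma~\ref{lem:G_K}, which the statement omits.
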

\begin{proof}
    By Lemma \ref{lem:G_K}, any optimal solution is feasible. Moreover, all $G\in\mathcal{G}^*$ induce identical closed-loop dynamics $\dot x = A_Kx = \bar{X}Gx$, and therefore yield the same solution to \eqref{math:Pol_eval}, leading to identical cost. Optimality follows from the equivalence between data-driven and model-based closed-loop system representations established in Theorem~\ref{theo:closed}.
\end{proof}

Next, we show that $\mathcal{G}$ enjoys some favorable properties.
\begin{lemma}
    \label{lem:set} 
   The set $\mathcal{G}$ is path-connected, unbounded, and relatively open in the affine subspace%
   \begin{align}
      \mathcal{S}\coloneqq\{G \in\mathbb{R}^{T\times n}: \tilde{X} G = I_n\}. 
   \end{align}
\end{lemma}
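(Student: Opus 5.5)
We relate $\mathcal{G}$ to the set $\mathcal{K}$ of stabilizing gains through an affine map, and then transfer the known properties of $\mathcal{K}$ (open, unbounded, path-connected \cite[Section 3]{bu2019}).

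Define $\Phi:\mathcal{S}\to\mathbb{R}^{m\times n}$ by $\Phi(G)=-\tilde U G$. On $\mathcal{S}$, equation \eqref{math:ex_sys_int} gives
\[
\bar X G=(A\tilde X+B\tilde U)G=A-BK \quad\text{with } K=\Phi(G).
\]
Hence $\mathcal{G}=\Phi^{-1}(\mathcal{K})$. This is the second characterization in \eqref{math:G}.

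\textbf{Relative openness.} The map $\Phi$ is continuous (it is linear) and $\mathcal{K}$ is open. Therefore the preimage $\Phi^{-1}(\mathcal{K})$ is relatively open in $\mathcal{S}$. Equivalently, one can use continuity of eigenvalues: $G\mapsto\bar X G$ is continuous, and the Hurwitz matrices form an open set.

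\textbf{Surjectivity and a continuous section.} By Assumption~\ref{ass:2}, the stacked matrix $\left[\begin{smallmatrix}\tilde U\\ \tilde X\end{smallmatrix}\right]$ has full row rank. So for every $K$ the particular solution from Lemma~\ref{lem:G_K},
\[
\sigma(K)=\begin{bmatrix}\tilde U\\ \tilde X\end{bmatrix}^\dagger\begin{bmatrix}-K\\ I_n\end{bmatrix},
\]
lies in $\mathcal{S}$ and satisfies $\Phi(\sigma(K))=K$. The map $\sigma$ is affine, hence continuous. Moreover $\Phi^{-1}(K)=\sigma(K)+\mathcal{N}$, where $\mathcal{N}=\{N:\left[\begin{smallmatrix}\tilde U\\ \tilde X\end{smallmatrix}\right]N=0\}$ is a linear subspace.

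\textbf{Path-connectedness.} Take $G_0,G_1\in\mathcal{G}$ and set $K_i=\Phi(G_i)\in\mathcal{K}$. Write $G_i=\sigma(K_i)+N_i$ with $N_i\in\mathcal{N}$. Since $\mathcal{K}$ is path-connected, there is a continuous path $K(s)\subset\mathcal{K}$ from $K_0$ to $K_1$. Define
\[
G(s)=\sigma(K(s))+(1-s)N_0+sN_1 .
\]
This path is continuous. Each $G(s)$ lies in $\mathcal{S}$, and $\Phi(G(s))=K(s)\in\mathcal{K}$ because $\Phi$ annihilates $\mathcal{N}$. Hence $G(s)\in\mathcal{G}$ for all $s$, and the path joins $G_0$ to $G_1$.

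\textbf{Unboundedness.} Since $\mathcal{K}$ is unbounded, choose $K_j\in\mathcal{K}$ with $\|K_j\|\to\infty$. Then $\|\sigma(K_j)\|\ge\|K_j\|/\|\tilde U\|\to\infty$, because $\|K_j\|=\|\tilde U\sigma(K_j)\|\le\|\tilde U\|\,\|\sigma(K_j)\|$. So $\mathcal{G}$ is unbounded. Alternatively, when $T>m+n$, one can add $tN$ for a nonzero $N\in\mathcal{N}$ and let $t\to\infty$.

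\textbf{Main obstacle.} The only delicate step is path-connectedness, because $\Phi$ is not injective. The explicit section $\sigma$ together with the linear interpolation in the kernel $\mathcal{N}$ handles this. The remaining steps are immediate from the corresponding properties of $\mathcal{K}$.
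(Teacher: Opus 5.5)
Your proof is correct and follows essentially the same route as the paper. It uses the same path $G(s)=\sigma(K(s))+(1-s)N_0+sN_1$ lifted from a path in $\mathcal{K}$, gets relative openness from the preimage of an open set under a continuous linear map restricted to $\mathcal{S}$, and inherits unboundedness from $\mathcal{K}$. Your explicit bound $\Vert\sigma(K_j)\Vert\ge\Vert K_j\Vert/\Vert\tilde U\Vert$ and your restriction of the null-space argument to $T>m+n$ are slightly more careful than the paper's corresponding remarks.
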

The proof is given in Appendix \ref{app:proof2}.

\begin{lemma}
    \label{lemma:f_G_ana} 
    The LQR cost function \eqref{math:f_G} is real analytic over $\mathcal{G}$, i.e., $f_G \in C^\omega(\mathcal{G})$.
\end{lemma}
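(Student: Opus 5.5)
The plan is to write $f_G$ explicitly as a composition of maps that are obviously real analytic, working on $\mathcal{G}$ viewed as an open subset of $\mathcal{S}$ (Lemma~\ref{lem:set}). Since $\mathcal{S}$ is an affine subspace, real analyticity on $\mathcal{G}$ means real analyticity of $f$ restricted to $\mathcal{G}$ in affine coordinates of $\mathcal{S}$. It suffices, however, to show that the formula for $f_G$ extends to a real analytic function on an open subset of $\mathbb{R}^{T\times n}$ containing $\mathcal{G}$, since restricting an analytic function to an affine subspace preserves analyticity.

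\textbf{Step 1.} Define the open set
\[
\mathcal{O}\coloneqq\{G\in\mathbb{R}^{T\times n}\mid \operatorname{Re}(\lambda_i(\bar X G))<0~\forall i\}.
\]
It is open because eigenvalues depend continuously on the entries and $G\mapsto \bar XG$ is linear. Clearly $\mathcal{G}=\mathcal{O}\cap\mathcal{S}$.

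\textbf{Step 2.} Consider the vectorized Lyapunov operator
\[
L(G)=I_n\otimes(\bar XG)^\top+(\bar XG)^\top\otimes I_n .
\]
Its eigenvalues are the sums $\lambda_i(\bar XG)+\lambda_j(\bar XG)$, which have negative real part on $\mathcal{O}$. Hence $L(G)$ is invertible there. The entries of $L(G)$ are linear (thus polynomial) in $G$, so $\det L(G)$ is a polynomial that is nonvanishing on $\mathcal{O}$. By Cramer's rule, $L(G)^{-1}=\operatorname{adj}(L(G))/\det L(G)$ has entries that are rational functions with nonvanishing denominators, and is therefore real analytic on $\mathcal{O}$.

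\textbf{Step 3.} The right-hand side $\operatorname{vec}(Q+(\tilde UG)^\top R\,\tilde UG)$ is quadratic, hence polynomial, in $G$. Therefore, by the vectorization formula \eqref{math:vec} (with the evident correction $\tilde UG$), $\operatorname{vec}(P_G)=-L(G)^{-1}\operatorname{vec}(\cdot)$ is a product of real analytic functions and is real analytic on $\mathcal{O}$. Uniqueness of $P_G$ from Lemma~\ref{lemma:pos_s} guarantees that this formula indeed defines the $P_G$ of Definition~\ref{def:LQR_1}.

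\textbf{Step 4.} The trace is linear, so $f_G=\operatorname{tr}(P_G)$ is in fact a rational function of $G$ with denominator nonvanishing on $\mathcal{O}$. Hence $f_G\in C^\omega(\mathcal{O})$, and restricting to the relatively open set $\mathcal{G}\subseteq\mathcal{S}$ (parametrized affinely, e.g.\ $G=\tilde X^\dagger+N$ with $\tilde XN=0$) gives $f_G\in C^\omega(\mathcal{G})$.

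The only delicate point is Step 2: we must ensure invertibility of the Kronecker sum on all of $\mathcal{O}$ rather than merely on $\mathcal{G}$, and we must make precise what analyticity means on the lower-dimensional set $\mathcal{G}$. Both are handled by the spectrum of the Kronecker sum and by composing with the affine parametrization of $\mathcal{S}$. As an alternative route, one could apply the implicit function theorem for analytic maps to $F(G,P)=(\bar XG)^\top P+P\bar XG+Q+(\tilde UG)^\top R\tilde UG$, whose partial derivative in $P$ is the invertible Lyapunov operator.
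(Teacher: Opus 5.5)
Your proposal is correct and follows the same route as the paper: vectorize the Lyapunov equation via \eqref{math:vec}, use Cramer's rule to see that $G\mapsto P_G$ is rational with a denominator that does not vanish where $\bar X G$ is Hurwitz, and conclude by linearity of the trace. The paper states this in two lines; your version makes explicit the points it leaves implicit, namely why the Kronecker sum is invertible, what analyticity means on the relatively open subset $\mathcal{G}$ of the affine space $\mathcal{S}$, and the typo $\tilde G$ in place of $\tilde U G$ in \eqref{math:vec}.
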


\begin{proof}
By Cramer's rule, the map $G\mapsto P_G$ is rational in the entries of $G$ (see \eqref{math:vec}) and therefore belongs to $C^\omega(\mathcal{G})$. Since the trace is linear, the composition $G\mapsto \operatorname{tr}(P_G)$ is also in $C^\omega(\mathcal{G})$. Hence, $f_G\in C^\omega(\mathcal{G})$.
\end{proof}

Since $f_G$ is real analytic over $\mathcal{G}$, it is infinitely differentiable with smooth derivatives, ensuring the well-posedness of the associated gradient flow. We now characterize $\partial \mathcal{G}$ as
\begin{align}
 \partial \mathcal{G}  \subseteq  \! \{G \in\mathbb{R}^{T\times n }\! \mid  \! I_n 
   = \tilde X G,  \max_i \operatorname{Re}\bigl(\lambda_i(\bar{X} G)\bigr)=0 \}. \label{math:G_boundary}
\end{align}
It is unclear whether equality holds in \eqref{math:G_boundary} or whether the inclusion is strict.\footnote{Similar consideration arises for the set $\mathcal{K}$ \eqref{math:K}. In particular, for $m>1$, not every $K$ satisfying $\max_i \operatorname{Re}(\lambda_i(A_K))=0$ lies on the boundary $\partial \mathcal{K}$, see \cite[Prop. 3.5]{bu2019}.}

\begin{lemma}
    \label{Lemma:noncoercive} 
   The LQR cost \eqref{math:f_G} is not coercive over $\mathcal{G}$, i.e.,%
   \begin{align}
      \lim_{G\to G\in\partial G} f_G = \infty, \quad  \text{while} \quad \lim_{\Vert G \Vert_2 \to \infty, G\in\mathcal{G}} f_G \leq \infty.
   \end{align}
\end{lemma}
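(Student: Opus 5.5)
The plan is to prove the two claims separately, with the map $G\mapsto K=-\tilde U G$ linking everything to the model-based cost. By Theorem~\ref{theo:closed}, every $G\in\mathcal{G}$ satisfies $\bar X G = A-BK$ and $\tilde U G=-K$ with $K=-\tilde UG\in\mathcal{K}$. Hence the Lyapunov equation \eqref{math:Pol_eval} coincides with \eqref{math:Lyap1}, so $P_G=P_K$ and $f_G=f_K$ in the sense of \eqref{math:cost_K}. All properties of $f_G$ are therefore inherited from $f_K$ through the affine map $G\mapsto -\tilde U G$, restricted to the affine subspace $\mathcal{S}$ of Lemma~\ref{lem:set}.

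\textbf{Boundary behaviour.} Take a sequence $G_j\in\mathcal{G}$ with $G_j\to \bar G\in\partial\mathcal{G}$. Continuity of $G\mapsto \tilde X G$ gives $\tilde X\bar G=I_n$. By \eqref{math:G_boundary}, $\bar X\bar G$ has an eigenvalue $\lambda$ with $\operatorname{Re}\lambda=0$, and $K_j=-\tilde UG_j\to \bar K$ with $\bar K\in\partial\mathcal{K}$.

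To show $f_{G_j}\to\infty$, I will use the identity $f_G=\operatorname{tr}\bigl((Q+K^\top RK)Y_K\bigr)$, with $Y_K=\int_0^\infty e^{A_Kt}e^{A_K^\top t}\mathrm{d}t$ as in Remark~\ref{remark:I_n}. The first attempt is to bound $Y_K$ along the eigenvector $v$ of $\bar X\bar G$ for $\lambda$: one has $v^* Y_{K_j} v\gtrsim 1/(2|\operatorname{Re}\lambda_j|)\to\infty$. One then needs $Q+K_j^\top RK_j$ not to annihilate this direction.

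\textbf{Main obstacle.} This is the step where that requirement can fail: if $Q$ is only semidefinite, the direction $v$ could lie in $\ker Q\cap\ker\bar K$. In that case $A\bar v=\lambda \bar v$ with $Q\bar v=0$, which contradicts detectability in Assumption~\ref{ass:1} since $\operatorname{Re}\lambda=0$. So detectability should close the argument. Alternatively, I would simply invoke the coercivity of $f_K$ on $\mathcal{K}$ from \cite{bu2020clqr}, which states $f_K\to\infty$ as $K\to\partial\mathcal{K}$, and transfer it via $f_{G_j}=f_{K_j}$.

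\textbf{Unbounded directions (non-coercivity).} For the second claim I will construct an explicit divergent sequence with bounded cost. Fix $G_0\in\mathcal{G}$. By Lemma~\ref{lem:G_K}, assuming $T>m+n$, there is a nonzero $N$ with $\left[\begin{smallmatrix}\tilde U\\ \tilde X\end{smallmatrix}\right]N=0$. Set $G_s=G_0+sN$ for $s\in\mathbb{R}$. Then $\tilde XG_s=I_n$ and $\tilde UG_s=\tilde UG_0$, and by \eqref{math:ex_sys_int} also $\bar XG_s=\bar XG_0$. Hence $G_s\in\mathcal{G}$, $P_{G_s}=P_{G_0}$, and $f_{G_s}=f_{G_0}<\infty$ while $\Vert G_s\Vert_2\to\infty$. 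This shows that $f_G$ stays bounded along some unbounded paths, so $f_G$ is not coercive. It also justifies the stated ``$\leq\infty$'': along other unbounded sequences, those in which $K=-\tilde UG$ diverges, $f_G\to\infty$ by the coercivity of $f_K$, so the limit is not uniformly finite either.
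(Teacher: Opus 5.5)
Your non-coercivity half is the paper's own argument. You shift $G$ by a nonzero $N\in\ker\left[\begin{smallmatrix}\tilde U\\ \tilde X\end{smallmatrix}\right]$, which leaves $\tilde UG$ and $\bar XG$, and hence $P_G$, unchanged. You also rightly state the hypothesis $T>m+n$, which the paper uses only implicitly.

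For the boundary half you take a different route. You pull back to $K=-\tilde UG$, note that $\bar X\bar G=A-B\bar K$ forces $\bar K\in\partial\mathcal K$, and then close with detectability or with the coercivity of $f_K$. The paper stays in the $G$-parameterization. It observes that $I_n\otimes(\bar XG_i)^\top+(\bar XG_i)^\top\otimes I_n$ acquires an eigenvalue tending to zero, and concludes $\Vert P_{G_i}\Vert_F\to\infty$ because $\operatorname{vec}\bigl(Q+(\tilde UG_i)^\top R\tilde UG_i\bigr)$ is bounded. That is shorter and avoids model-based results, but its last inference does not follow by itself. An inverse whose norm blows up, applied to a bounded vector, need not blow up unless the vector keeps a nonvanishing component along the near-singular direction. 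Your ``main obstacle'' is exactly this missing step, and detectability is what supplies it, so your route is the more rigorous one.

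Your explicit eigenvector argument needs one repair. The identity $w^*Y_Kw=\Vert w\Vert^2/(2|\operatorname{Re}\lambda|)$ holds for a \emph{left} eigenvector $w$ of $A_K$. Your detectability contradiction ($\bar Kv=0\Rightarrow Av=\lambda v$), however, uses a \emph{right} eigenvector. Moreover, a large $w^*Y_Kw$ together with $(Q+K^\top RK)w\neq 0$ does not by itself force $\operatorname{tr}\bigl((Q+K^\top RK)Y_K\bigr)$ to be large.

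Work with $P_K$ instead. Take unit right eigenvectors $v_j$ of $A-BK_j$ with eigenvalues $\lambda_j\to\lambda\in i\mathbb{R}$. Then
\[
f_{G_j}=\operatorname{tr}(P_{K_j})\geq v_j^*P_{K_j}v_j=\frac{\Vert (Q+K_j^\top RK_j)^{1/2}v_j\Vert^2}{2|\operatorname{Re}\lambda_j|}.
\]
Along a subsequence $v_j\to v$ with $(A-B\bar K)v=\lambda v$, so detectability keeps the numerator bounded away from zero. Every subsequence has a further subsequence along which this holds, so $f_{G_j}\to\infty$ for the whole sequence. Alternatively, your fallback of citing the coercivity of $f_K$, which the paper itself asserts in Subsection~\ref{subsec:PO_LQR}, is sufficient on its own.
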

The proof appears in Appendix \ref{app:proof3}.

\begin{remark}
    Unlike the model-based LQR cost \eqref{math:cost_K}, the LQR cost $f_G$ under the CL parameterization is noncoercive, and its sublevel sets are therefore noncompact.
\end{remark}

\subsection{Integral Reinforcement Learning Parameterization}
The parameterization and some results of this subsection follow those given in \cite{JIANG20122699} and \cite{BIAN2016348}. 
The IRL parameterization is based on \eqref{math:off_pol_eval}, which can equivalently be written as 
\begin{align}
    \operatorname{tr}\Bigl(P \bigl(\underbrace{x(t+\delta ) x(t+\delta )^\top - x(t)x(t)^\top}_{\eqqcolon r_{\Delta x}(t)} \bigr) \Bigr) \notag \hspace{2cm}  \\
    \quad  = - \operatorname{tr}\Bigl( \left(Q + K^\top R K  \right)\underbrace{\int_t^{t+\delta} x x^\top \mathrm{d}\tau}_{\eqqcolon r_{xx}(t)} \Bigr)  \hspace{2cm} \label{math:int_lyap} \\
    \quad ~\! \! \! + 2 \operatorname{tr}\biggl(   B^\top P  \Bigl(\underbrace{\int_t^{t+\delta} xu^\top \mathrm{d}\tau}_{\eqqcolon r_{xu}(t)} +  \int_t^{t+\delta} x x^\top \mathrm{d}\tau K^\top \Bigr) \biggr) \notag
\end{align} 
or
\begin{multline}
 -  \operatorname{vec}\left(r_{x x}(t) \right)^\top \! \operatorname{vec}\left(Q\! + \!K^\top \! R K  \right)  =  \operatorname{vec}\left(r_{\Delta x}(t) \right)^\top \! \operatorname{vec}\left(P  \right) \\ - 2  \operatorname{vec}\left( (r_{xu}(t)+  r_{xx}(t) K^\top )^\top \right)^\top \operatorname{vec}\left(B^\top P  \right), 
\end{multline} 
where $r_{\Delta x}(t), r_{xx}(t)$ and $r_{xu}(t)$ are measured under the system \eqref{math:lin_sys}.
By defining 
\begin{align}
 \Gamma^{\Delta x} &= \begin{bmatrix} \operatorname{vec}\left(r_{\Delta x}(t_1) \right)  &  \! \! \! \dots \! \! \!  & \operatorname{vec}\left( r_{\Delta x}(t_T)\right) \end{bmatrix}^{\!\top} \! \in\mathbb{R}^{T \times n^2} ,\\
\Gamma^{xx}& = \begin{bmatrix} \operatorname{vec}\left(r_{x x}(t_1) \right)  &  \! \! \! \dots  \! \! \!  & \operatorname{vec}\left(r_{xx}(t_T) \right)\end{bmatrix}^\top \! \in\mathbb{R}^{T \times n^2} ,  \\
\Gamma^{ux} & = \begin{bmatrix} \operatorname{vec}\bigl(r_{x u}^{ \top}(t_1) \bigr) &   \! \! \! \dots  \! \! \!  & \operatorname{vec}\bigl(r_{x u}^{ \top}(t_T) \bigr)\end{bmatrix}^\top \in\mathbb{R}^{T \times mn}, 
    \end{align} 
the system of linear equations of \eqref{math:int_lyap} for $T$ samples is 
\begin{align}
 \underbrace{- \Gamma^{xx} \operatorname{vec}\left(Q+K^\top R K  \right)}_{\eqqcolon b\in\mathbb{R}^T } \hspace{4cm} \notag
 \\ 
 = \underbrace{\begin{bmatrix}\Gamma^{\Delta x} & -2E(K)\end{bmatrix}}_{\eqqcolon \bar{\Phi}\in\mathbb{R}^{T\times (n^2+mn)}} \begin{bmatrix} \operatorname{vec}\left(P  \right)\\ \operatorname{vec}\left(B^\top P  \right)\end{bmatrix} \quad ~\!\! \label{math:LSE_0} 
 \\ = \underbrace{\begin{bmatrix}\Gamma^{\Delta x}D & -2E(K)\end{bmatrix}}_{\eqqcolon \Phi\in\mathbb{R}^{T\times (n(n+1)/2+mn)}} \underbrace{\begin{bmatrix} \operatorname{vech}\left(P  \right)\\ \operatorname{vec}\left(B^\top P\right)\end{bmatrix}}_{\eqqcolon\theta}, \label{math:LSE} 
\end{align}
where $E(K)\coloneqq \Gamma^{ux}+\Gamma^{xx}\left(I_n \otimes K^\top \right)$ and the duplication matrix $D\in\mathbb{R}^{n^2 \times \frac{n(n+1)}{2}}$ satisfies $\operatorname{vec}\left(P  \right) = D \operatorname{vech}\left(P \right)$. 
Equation \eqref{math:LSE} determines $P$ and $B^\top P$ for a given $K\in\mathcal{K}$ from the data matrices $\Gamma^{xx},\Gamma^{ux}$, and $ \Gamma^{\Delta x}$, thereby providing an indirect representation of the underlying system.

As in the CL parameterization, sufficiently rich data is required to represent the system behavior in \eqref{math:LSE}. 
\begin{lemma}[\texorpdfstring{\cite[Lemma 6]{JIANG20122699}}{}]
    \label{lemma:rank} 
    If there exists an integer $T_0>0$, such that, for all $T\geq T_0$, 
    \begin{align} \label{math:rank_cond} 
     \operatorname{rank} \begin{bmatrix} \Gamma^{xx} & \Gamma^{ux} \end{bmatrix} = \frac{n(n+1)}{2}+mn 
    \end{align} hold, then $\Phi$ has full column rank.
\end{lemma}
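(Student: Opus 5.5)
The plan is to argue by contradiction via the kernel of $\Phi$. Fix $K\in\mathcal{K}$, as in the setting of \eqref{math:LSE}. Suppose $\Phi\theta=0$ for some nonzero $\theta=[\operatorname{vech}(P)^\top,\operatorname{vec}(W)^\top]^\top$, where $P\in\mathbb{S}^n$ and $W\in\mathbb{R}^{m\times n}$ are arbitrary and unrelated to the system. Row by row, this says that for every sampling interval $t_i$,
\begin{align*}
\operatorname{tr}\bigl(P\, r_{\Delta x}(t_i)\bigr) - 2\operatorname{tr}\bigl(W\,(r_{xu}(t_i)+r_{xx}(t_i)K^\top)\bigr)=0 .
\end{align*}
I will show that the rank condition \eqref{math:rank_cond} forces $P=0$ and $W=0$.

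The first step eliminates $r_{\Delta x}$ using the true dynamics \eqref{math:lin_sys}. Since $\tfrac{\mathrm{d}}{\mathrm{d}\tau}(xx^\top)=Axx^\top+xx^\top A^\top+Bux^\top+xu^\top B^\top$, integrating over each interval gives
\begin{align*}
\operatorname{tr}(P r_{\Delta x}) = \operatorname{tr}\bigl((A^\top P+PA) r_{xx}\bigr)+2\operatorname{tr}(B^\top P\, r_{xu}),
\end{align*}
which is the same computation as in \eqref{math:V_dot}. The $W$-term splits into $2\operatorname{tr}(W r_{xu})$ and $2\operatorname{tr}(W r_{xx}K^\top)$. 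Because $r_{xx}$ is symmetric, the latter equals $\operatorname{tr}\bigl((K^\top W+W^\top K)r_{xx}\bigr)$. Substituting, every row becomes
\begin{align*}
\operatorname{tr}(M r_{xx}(t_i))+2\operatorname{tr}(N r_{xu}(t_i))=0,
\end{align*}
with the symmetric matrix $M\coloneqq A^\top P+PA-K^\top W-W^\top K$ and $N\coloneqq B^\top P-W$. Stacking over $i$, this reads $\Gamma^{xx}\operatorname{vec}(M)+2\Gamma^{ux}\operatorname{vec}(N)=0$ (up to the transposition convention used for $\Gamma^{ux}$).

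The second step, which I expect to be the main technical obstacle, is to extract $M=0$ and $N=0$ from the rank condition. The difficulty is that $\Gamma^{xx}$ has $n^2$ columns but only $n(n+1)/2$ of them are distinct, since $r_{xx}$ is symmetric. The matrix $[\Gamma^{xx}\ \Gamma^{ux}]$ therefore has rank at most $n(n+1)/2+mn$, and \eqref{math:rank_cond} says exactly that the distinct columns are linearly independent. I will make this precise by writing $\Gamma^{xx}=\Gamma^{xx}_{\mathrm{s}}L$. Here $\Gamma^{xx}_{\mathrm{s}}$ collects the $n(n+1)/2$ columns indexed by $\operatorname{vech}$, and $L$ is a selection matrix. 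For symmetric $M$ one checks that $L\operatorname{vec}(M)=\tilde D\operatorname{vech}(M)$, where $\tilde D$ is diagonal with entries $1$ on diagonal positions and $2$ on off-diagonal positions. The full column rank of $[\Gamma^{xx}_{\mathrm{s}}\ \Gamma^{ux}]$ then yields $\tilde D\operatorname{vech}(M)=0$ and $\operatorname{vec}(N)=0$, hence $M=0$ and $N=0$. The assumption that \eqref{math:rank_cond} holds for all $T\geq T_0$ is used only to fix one sufficiently long data set.

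The final step closes the argument. From $N=0$ we get $W=B^\top P$. Substituting into $M=0$ gives $(A-BK)^\top P+P(A-BK)=0$. Since $K\in\mathcal{K}$, the matrix $A_K$ is Hurwitz, so the Lyapunov operator $P\mapsto A_K^\top P+PA_K$ is invertible. Hence $P=0$, and then $W=B^\top P=0$, so $\theta=0$, contradicting the assumption. Therefore $\Phi$ has full column rank.
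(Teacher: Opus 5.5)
Your proof is correct and is essentially the argument behind the cited \cite[Lemma~6]{JIANG20122699}; the paper itself gives no proof. In that argument, a nonzero kernel vector is rewritten via the true dynamics as $\Gamma^{xx}\operatorname{vec}(M)+2\Gamma^{ux}\operatorname{vec}(N)=0$, the duplicated symmetric columns are collapsed so that the rank condition forces $M=0$ and $N=0$, and the Hurwitz Lyapunov equation then gives $P=0$ and $W=0$. You are also right to fix $K\in\mathcal{K}$ explicitly: the statement leaves this implicit but needs it, because for a gain with $\lambda_i(A_K)+\lambda_j(A_K)=0$ any nonzero symmetric $P$ with $A_K^\top P+PA_K=0$ yields the nonzero kernel vector $[\operatorname{vech}(P)^\top\ \operatorname{vec}(B^\top P)^\top]^\top$ of $\Phi$.
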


A necessary condition for \eqref{math:rank_cond} is $T\geq T_0\geq \frac{n(n+1)}{2}+mn$. 
We  impose the following standing assumption.
\begin{assumption}
    \label{ass:rank_2} 
    Condition \eqref{math:rank_cond} holds. 
\end{assumption}

Under Assumption~\ref{ass:rank_2}, the least-squares problem $     \min_\theta \Vert b - \Phi\theta \Vert_2$ of \eqref{math:LSE} admits the  minimal Euclidean norm solution
\begin{align}
 \label{math:theta} 
 \hat \theta &= \Phi^\dagger b = \left(\Phi^\top  \Phi \right)^{-1}  \Phi^\top b =  \begin{bmatrix} \operatorname{vech}(\hat P)\\ \operatorname{vec}(B^\top \hat P  ) \end{bmatrix}
\end{align} 
which satisfies  $\Vert b-\Phi\hat \theta \Vert_2 =0$.
Note that the consistency of \eqref{math:LSE} relies on noise-free measurements $\Gamma^{\Delta x}, \Gamma^{xx}$ and $\Gamma^{ux}$ satisfying the dynamics \eqref{math:lin_sys} and Assumption~\ref{ass:rank_2}.

\begin{lemma}
    \label{lemma:pol_imp} 
    For any $K\in\mathcal{K}$, the matrix $\hat P$ obtained from \eqref{math:theta} coincides with the solution of the Lyapunov equation~\eqref{math:Lyap1}. 
\end{lemma}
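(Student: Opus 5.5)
The plan is to show that the true pair $(P_K, B^\top P_K)$, with $P_K$ the solution of the Lyapunov equation \eqref{math:Lyap1}, is an exact solution of the linear system \eqref{math:LSE}. The claim then follows from uniqueness, since Assumption~\ref{ass:rank_2} and Lemma~\ref{lemma:rank} give $\Phi$ full column rank.

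\emph{Step 1: existence.} Fix $K\in\mathcal{K}$. Since $A_K$ is Hurwitz, \eqref{math:Lyap1} has a unique symmetric solution $P_K\succeq 0$. I will retrace the derivation \eqref{math:V_dot}--\eqref{math:off_pol_eval} along the measured trajectory, replacing $K_k$ by $K$ and $P_k$ by $P_K$:
\begin{itemize}
\item first write $\dot x = A_K x + B(u+Kx)$;
\item then differentiate $x^\top P_K x$ and substitute \eqref{math:Lyap1};
\item finally integrate over each interval $[t_i,t_i+\delta]$.
\end{itemize}
This gives, for every $i$, exactly the identity \eqref{math:int_lyap} with $P=P_K$. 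The trace manipulations, with $\operatorname{tr}(B^\top P x u^\top)$ terms and $\operatorname{vec}$ identities such as $\operatorname{vec}(r_{xx}K^\top)=(K\otimes I_n)\operatorname{vec}(r_{xx})$, then need to be matched to the rows of $\Gamma^{ux}$ and $\Gamma^{xx}(I_n\otimes K^\top)$ as stored in the paper's conventions. Stacking the $T$ rows shows that $\theta_K:=[\operatorname{vech}(P_K)^\top,\ \operatorname{vec}(B^\top P_K)^\top]^\top$ satisfies $\Phi\theta_K=b$. The only facts used are that the data come from \eqref{math:lin_sys} without noise and that $P_K$ is symmetric, so that $\operatorname{vec}(P_K)=D\operatorname{vech}(P_K)$.

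\emph{Step 2: uniqueness.} By Lemma~\ref{lemma:rank} under Assumption~\ref{ass:rank_2}, $\Phi$ has full column rank, so $\Phi^\top\Phi$ is invertible. Because the system is consistent, $\hat\theta=(\Phi^\top\Phi)^{-1}\Phi^\top b=(\Phi^\top\Phi)^{-1}\Phi^\top\Phi\theta_K=\theta_K$. Reading off the first block gives $\operatorname{vech}(\hat P)=\operatorname{vech}(P_K)$. Both matrices are symmetric by construction, so $\hat P=P_K$; as a byproduct, the second block recovers $B^\top P_K$.

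The main obstacle is Step 1: the bookkeeping that the vectorized rows in \eqref{math:LSE_0} reproduce the scalar identity \eqref{math:int_lyap} exactly, in particular the sign and transpose conventions in $E(K)$ and the Kronecker identity for the $K^\top$ term. Everything else is immediate once the true Lyapunov solution is shown to satisfy the data equations.
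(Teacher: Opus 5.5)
Your proposal is correct and matches the paper's proof: the paper likewise shows that the true $\theta_K=[\operatorname{vech}(P_K)^\top\ \operatorname{vec}(B^\top P_K)^\top]^\top$ satisfies $\Phi\theta_K=b$, and then uses the full column rank of $\Phi$ (Lemma~\ref{lemma:rank}, applicable since $K\in\mathcal{K}$) to conclude $\hat\theta=\theta_K$. You only make explicit the integration/vectorization step that the paper compresses into ``by definition of $b$ and $\Phi$''; to match the paper's $E(K)$ you need the transposed form $\operatorname{vec}(K r_{xx})=(I_n\otimes K)\operatorname{vec}(r_{xx})$, because the rows of $\Gamma^{ux}$ store $\operatorname{vec}(r_{xu}^\top)$.
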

\begin{proof}
  Since $K\in\mathcal{K}$, \eqref{math:Lyap1} admits a unique solution $P_0\succeq 0$.
    By definition of $b$ and $\Phi$, this solution satisfies
    \begin{align}
      \Gamma^{xx} \operatorname{vec}\left( A_K^\top P_0 + P_0 A_K  \right) = b = \Phi \begin{bmatrix} \operatorname{vech}\left(P_0  \right)\\ \operatorname{vec}\left(B^\top P_0\right)\end{bmatrix}. \label{math:LSE_2} 
    \end{align} 
    Hence, $\theta_0 \eqqcolon \begin{bmatrix} \operatorname{vech}(P_0)^\top & \operatorname{vec}(B^\top P_0)^\top \end{bmatrix}^\top $  is a feasible solution to \eqref{math:LSE_2}. Under Assumption~\ref{ass:rank_2}, \eqref{math:LSE_2} has a unique least-squares solution \eqref{math:theta}. Therefore, $\hat{\theta}=\theta_0$, implying  $\hat{P}\!=\!P_0$.
\end{proof}

\begin{remark}
    \label{remark_1} 
     Lemma \ref{lemma:pol_imp} implies that \eqref{math:LSE}  enables a data-driven policy evaluation, analogous to \eqref{math:Pol_eval}. In particular, \eqref{math:theta} induces a mapping $g:\mathcal{K}\to S^n_{+}$ defined by\footnote{The inverse of the vectorization operator $\operatorname{vec}^{-1}_{k\times l}: \mathbb{R}^{k l}\to \mathbb{R}^{k\times l}$ is defined as
        $A\mapsto  \operatorname{vec}^{-1}_{k\times l}(A) = \left( \operatorname{vec}^\top(I_l ) \otimes I_k\right)(I_l \otimes A)$.}
    \begin{align}
        \label{math:g} 
      K\mapsto \hat{P}_K & \! = \! \operatorname{vec}^{-1}_{n \times n} \left( D \begin{bmatrix} I_{p} & 0_{p,mn}\end{bmatrix} \hat{\theta}_K \right) \!,~  p=\tfrac{n(n+1)}{2}
    \end{align} 
    where $\hat{\theta}_K$ denotes the least-squares estimate \eqref{math:theta} parametrized by the feedback gain $K$. The mapping \eqref{math:g} assigns to any $K\in\mathcal{K}$ a unique matrix $\hat P_K\succeq 0 $ satisfying \eqref{math:Lyap1}. 
\end{remark}

\begin{definition}
    \label{def:LQR_2} 
    The LQR cost under the IRL parameterization is defined as the matrix function 
    \begin{align}
        \label{math:f_IRL} 
      \hat f_K : \mathcal{K}\to \mathbb{R},~K \mapsto \hat f_K  = \operatorname{tr} ( \hat{P}_K ),
    \end{align}
    where $\hat{P}_K$ satisfies \eqref{math:LSE} and is obtained by \eqref{math:g}.
\end{definition}

Note that $\hat{f}_K$ denotes the LQR cost under IRL parameterization, where $\hat{P}_K$ is obtained from data, whereas $f_K$ corresponds to the model-based formulation in \eqref{math:cost_K}.
Analogous to \eqref{math:data_op1}, we consider minimizing the data-driven LQR cost \eqref{math:f_IRL}, i.e.
    \begin{align}
        \label{math:f_hat} 
     \min_{K\in\mathcal{K}} \hat{f}_K.
    \end{align} 

\begin{remark}
    \label{remark:same_P}
    By construction, the functions $f_K$ \eqref{math:cost_K} and $\hat{f}_K$ \eqref{math:f_IRL} coincide on the domain $\mathcal{K}$. Consequently, $\hat{f}_K$ inherits the analytical properties of $f_K$, in particular, $\hat{f}_K$ is real analytic, coercive and hence admits compact sublevel sets over $\mathcal{K}$.
\end{remark}

For the Riccati flow and the VI under IRL parameterization, a different set of linear equations is required. 
Following \cite{BIAN2016348}, the integrated version of \eqref{math:V_dot} is
\begin{align}
 \operatorname{tr}\left(P r_{\Delta x}(t)  \right) 
 & =  \operatorname{tr} \left(H r_{xx}(t)  \right) + 2 \operatorname{tr}\left(R K^+ r_{xu}(t) \right) 
\end{align} 
where $H = A^\top P + P A $ and $K^+=R^{-1}B^\top P$. This yields
\begin{align}
    \label{math:LSE2} 
 \underbrace{\Gamma^{\Delta x} \operatorname{vec}\left(P  \right)}_{\eqqcolon \tilde{b}}& = \underbrace{ \begin{bmatrix} \Gamma^{xx}D & 2 \Gamma^{ux} (I_n \otimes R) \end{bmatrix}}_{ \eqqcolon \tilde{\Phi}} \underbrace{\begin{bmatrix} \operatorname{vech}\left(H  \right)\\ \operatorname{vec}\left(K^+ \right)\end{bmatrix}}_{\tilde{\theta}},
\end{align} 
where $D\in\mathbb{R}^{n^2 \times \frac{n(n+1)}{2}}$ satisfies $\operatorname{vec}\left(H  \right) = D \operatorname{vech}\left( H \right)$. 

Equation \eqref{math:LSE2} has a unique solution $\hat{\tilde{\theta}}=\tilde{\Phi}^\dagger \tilde{b}$ if $\tilde{\Phi}$ has full column rank, leading to the following standing assumption.
\begin{assumption}
    \label{ass:rank_3} 
    The matrix  $\tilde \Phi$ has rank $\tfrac{n(n+1 )}{2 }+mn$.
\end{assumption}

\begin{remark}
    \label{remark:IRL} 
    Although the fundamental equations \eqref{math:LSE} and \eqref{math:LSE2} of the IRL parameterization are both constructed from a data-driven value function and rely on the same data matrices, they enable distinct parameter recovery.  In particular, \eqref{math:LSE} provides access to $P$ and $B^\top P$ for a given gain $K$, whereas \eqref{math:LSE2} yields $A^\top P + P A$ and $B^\top P$ for a given matrix $P$.
\end{remark}

The IRL parameterization may seem restricted to the LQR setting since  policy evaluation is embedded in \eqref{math:LSE} (see Remark~\ref{remark_1}), but it is not limited to this setting.
Integrating 
\begin{align}
  \frac{d }{dt} \left(x x^\top  \right) = A x x^\top + B u x^\top + x x^\top A^\top + x u^\top B^\top 
\end{align} 
over a time interval $\delta >0$ yields
    \begin{align}
      r_{\Delta x }(t) = A r_{xx}(t) + B r_{xu}^\top(t) + r_{xx}(t) A^\top \! + r_{xu}(t) B^\top \! . \label{math:alt_1} 
\end{align} 
Aggregating the vectorized form of \eqref{math:alt_1} over multiple time intervals gives,  in a form structurally similar to~\eqref{math:ex_sys_int},%
\begin{align}
(\Gamma^{\Delta x})^\top \! = \mathcal{A}(\Gamma^{xx})^\top \! + \mathcal{B}(\Gamma^{xu} )^\top = \begin{bmatrix} \mathcal{B}& \mathcal{A} \end{bmatrix} \begin{bmatrix} (\Gamma^{xu})^{\top} \\ (\Gamma^{xx})^\top \end{bmatrix}, \label{math:ext_ss} 
\end{align} 
where $ \Gamma^{xu} = \Gamma^{ux} C_{m,n}$, $C_{m,n}$ is a commutation matrix,
 $\mathcal{A}  = (I_n \otimes A) + (A \otimes I_n) \in \mathbb{R}^{n^2\times n^2}$, and $
 \mathcal{B} = (I_n \otimes B) C_{m,n} + (B\otimes I_n )\in\mathbb{R}^{n^2 \times mn}$. 
 Since $\mathcal{A}$ and $\mathcal{B}$ depend affinely on $A$ and $B$, respectively, the system $(A,B)$ can be identified via a least squares method from \eqref{math:ext_ss} if Assumption~\ref{ass:rank_2} holds.\footnote{Equation \eqref{math:ext_ss} can be written in linear regression form as $\operatorname{vec}(( \Gamma^{\Delta x})^\top) = \hat \Phi \bigl[ \operatorname{vec}\left(A \right)^\top \operatorname{vec}\left( B \right)^\top \bigr]^\top$\!\!, where $\hat{\Phi}$ is constructed from $\Gamma^{xx}$ and $\Gamma^{xu}$. } This demonstrates that the IRL parameterization enables data-driven formulations for general controller designs.

\subsection{Comparison of Both Parameterizations}
\label{subsec:comp} 
For valid parameterization, the rank condition of Assumption~\ref{ass:2} must hold for the CL parameterization, whereas Assumptions~\ref{ass:rank_2} or~\ref{ass:rank_3} are required for the IRL parameterization.
A persistency of excitation condition ensuring Assumption~\ref{ass:2} is available for the CL parameterization (see \cite[Lemma~4]{Lopez2023}), enabling the rank condition to be enforced a priori through input design.
In contrast, an equivalent condition has not yet been established for the IRL parameterization and the rank condition is thus verified only after data collection. 
Moreover, the resulting sample complexity differs between the parameterizations. The minimum number of samples required to satisfy the respective rank conditions is $T_{\mathrm{CL}}=m+n$ for the CL parameterization and $T_{\mathrm{IRL}}=\tfrac{n(n+1 )}{2 }+mn$ for the IRL parameterization, respectively. 
Their difference is
 $T_{\mathrm{IRL}} - T_{\mathrm{CL}} = (n-1)\bigl( m + \frac{n }{2}\bigr)\geq 0$,
implying $T_{\mathrm{CL}} = T_{\mathrm{IRL}} $ for $n=1$ and $T_{\mathrm{IRL}}\geq T_{\mathrm{CL}}$ otherwise. 
Therefore, the IRL parameterization incurs higher sample complexity, scaling quadratically with the state dimension, whereas CL parameterization scales linearly.

The parameterizations further differ in their data structure. The CL parameterization is based on integrals of terms that are linear in $x$ and $u$ (see \eqref{math:sys_int}), whereas the IRL parameterization relies on integrals of quadratic terms in $x$ and $u$ (see\eqref{math:int_lyap}). Even after removing redundant columns of $\Gamma^{\Delta x}$ and $\Gamma^{xx}$, the IRL data matrices remain higher dimensional, implying that IRL operates in an expanded space while avoiding the equality constraint  $I_n = \tilde{X} G$. 
Moreover, integrals of linear terms act as a low-pass filter and may provide inherent noise attenuation, whereas quadratic terms can amplify measurement uncertainties and increase implementation effort due to the larger number of required integrators.
From an identification perspective, \eqref{math:ext_ss} has a structure similar to \eqref{math:ex_sys_int}. Exploiting the structure of $\mathcal{A}$ and $\mathcal{B}$ keeps the number of unknown parameters to $n^2+mn$, at the expense of a larger sample requirement under IRL parameterization compared to standard least-squares identification under CL parameterization.

Structurally, the CL parameterization directly represents the closed-loop matrix via $A_K = \bar{X} G $, whereas the IRL identifies the value-function quantities $B^\top P$, and $P$ or $A^\top P + P A$ (see Remark~\ref{remark:IRL}), 
which is  advantageous for formulating PI and VI schemes. The decision variables also differ in dimension. 
The CL parameterization optimizes over $G\in\mathbb{R}^{T\times n}$, whereas the IRL parameterization involves $n^2+mn$ parameters. Furthermore, the optimal solution is unique under IRL and satisfies $K^* = R^{-1}(B^\top P^*)$, whereas the CL parameterization solution $G^*$ is non-unique due to the null space of $\begin{bmatrix}\tilde{U}^\top  & \tilde{X}^\top\end{bmatrix}^\top$. This ambiguity can be resolved via the sample covariance parameterization proposed in \cite{Zhao2025,ZhaoCovariance}.
Overall, the CL parameterization provides a direct and structurally transparent formulation with lower sample complexity, whereas IRL trades these properties for a value-function-based formulation.

\section{Policy Iteration Algorithms}
\label{sec:PI} 
In this section, we present data-driven PI algorithms for the two parameterizations introduced in the previous section. 
\subsection{Closed-loop Parameterization}
Under the CL parameterization, policy evaluation is equivalent to solving 
\eqref{math:Pol_eval}, which admits a unique solution $P_G\succeq 0$ for each $G\in\mathcal{G}$, as established in Lemma~\ref{lemma:pos_s}.
Analogous to \eqref{math:pol_imp}, we formulate a policy improvement step.
\begin{proposition}
    \label{prop:improved_G} 
    Let $P\succeq 0$. An improved policy is $K=-\tilde{U}G$, where $G$ is the solution to%
    \begin{subequations}
        \label{math:opt_pol_eval} 
    \begin{align}
        \min_G  & \operatorname{tr} \bigl( Q + G^\top \tilde{U}^\top R \tilde{U} G + P \bar{X} G + (\bar{X}G)^\top P\bigr) \label{math:obj_G}  \\
        \text{s.t.}~ & I_n  = \tilde{X}G. 
    \end{align} 
    \end{subequations} 
\end{proposition}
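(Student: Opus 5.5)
The plan is to show that the data-driven problem \eqref{math:opt_pol_eval} is the CL counterpart of the model-based improvement step \eqref{math:pol_imp}, so that its minimizers give $K=R^{-1}B^\top P$. By Lemma~\ref{lem:G_K}, every $G$ satisfying $I_n=\tilde X G$ defines $K=-\tilde U G$. Then Theorem~\ref{theo:closed}, i.e.\ the identity $\bar X=A\tilde X+B\tilde U$ from \eqref{math:ex_sys_int}, gives $\bar X G = A + B\tilde U G = A-BK$. Substituting these into \eqref{math:obj_G}, the objective becomes
\begin{align*}
J(K)=\operatorname{tr}\bigl(Q+K^\top R K + P(A-BK)+(A-BK)^\top P\bigr),
\end{align*}
which is the trace of the Hamiltonian matrix $Q+K^\top RK+(A-BK)^\top P+P(A-BK)$. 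This is the matrix form of $H(x,-Kx,2Px)$ in \eqref{math:BE}, averaged over $x x^\top = I_n$ in the spirit of Remark~\ref{remark:I_n}.

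First I will check that the reparameterization is onto. Because $\left[\begin{smallmatrix}\tilde U\\ \tilde X\end{smallmatrix}\right]$ has full row rank (Assumption~\ref{ass:2}), for every $K\in\mathbb{R}^{m\times n}$ the particular solution $G^p$ in \eqref{math:gen_sol} is feasible. The map $G\mapsto K$ from the affine set $\mathcal S$ to $\mathbb{R}^{m\times n}$ is therefore surjective. Consequently, minimizing over $G\in\mathcal S$ is equivalent to minimizing $J(K)$ over all $K$: the optimal values agree, and the minimizers are exactly the preimages of the minimizing $K$.

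Next I will minimize $J$. It is a convex quadratic in $K$ with Hessian determined by $R\succ0$. Setting the gradient to zero,
\begin{align*}
\nabla_K J = 2RK - 2B^\top P = 0,
\end{align*}
gives the unique minimizer $K^+=R^{-1}B^\top P$, which is exactly \eqref{math:P_imp} and \eqref{math:control} with $\nabla_x V=2Px$. Hence every solution $G$ of \eqref{math:opt_pol_eval} satisfies $-\tilde U G=R^{-1}B^\top P$. Equivalently, one can stay in the $G$ variables: the Lagrangian conditions $2\tilde U^\top R\tilde U G+2\bar X^\top P-\tilde X^\top \Lambda=0$ and $\tilde X G=I_n$ admit solutions, and each of them yields the same $K^+$.

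The step I expect to need the most care is the meaning of ``improved.'' If $P=P_{G_k}$ comes from the policy evaluation step \eqref{math:Pol_eval} for some $G_k\in\mathcal G$, then $K^+$ is Kleinman's update. Theorem~\ref{theo:PI} then gives that $A-BK^+$ is Hurwitz, so any minimizer $G$ lies in $\mathcal G$, and that $P_{G}\preceq P_{G_k}$. The key inequality follows from
\begin{align*}
(A-BK^+)^\top P+P(A-BK^+)+Q+K^{+\top}RK^+ = -(K-K^+)^\top R(K-K^+)\preceq 0,
\end{align*}
where $K=-\tilde U G_k$, combined with a Lyapunov comparison. For a general $P\succeq0$ that does not come from policy evaluation, I will interpret ``improved'' in the sense of \eqref{math:pol_imp}, namely as the Hamiltonian minimizer, which the argument above already establishes.
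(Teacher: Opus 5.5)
Your proof is correct, but it takes a different route from the paper's. The paper stays entirely in the data variables. It substitutes $\dot x=\bar X G x$, $u=\tilde U G x$ and $V=x^\top P x$ into the Hamiltonian \eqref{math:BE} and writes $H(x,G)$ as a trace against $xx^\top$. It then asserts that minimizing $H$ for every $x$ is equivalent to minimizing $\sum_{i=1}^n H(e_i,G)$ over the standard basis, which is exactly \eqref{math:obj_G}. The identification of the resulting gain with $R^{-1}B^\top P$ is deferred to Lemma~\ref{lemma:same_imp}, a verification through the stationarity condition \eqref{math:stat}, and no monotonicity claim is made.

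You instead use $\bar X=A\tilde X+B\tilde U$ and the surjectivity of $G\mapsto-\tilde U G$ from $\mathcal S$ onto $\mathbb{R}^{m\times n}$ to pull the problem back to a strictly convex quadratic in $K$. This gains you three things. First, it proves the proposition and Lemma~\ref{lemma:same_imp} in a single step. Second, it supplies the justification the paper omits for replacing ``minimize $H$ for all $x$'' by ``minimize the trace.'' That replacement is valid only because a common pointwise minimizer exists. With $\mathcal{M}(K)\coloneqq Q+K^\top RK+(A-BK)^\top P+P(A-BK)$, the completed square behind your final identity gives $\mathcal{M}(K)-\mathcal{M}(K^+)=(K-K^+)^\top R(K-K^+)\succeq 0$ for every $K$ and any $P$. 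Hence $K^+$ minimizes $H$ for each $x$ and under any positive definite weighting. Third, it gives a genuine notion of improvement, namely $A-BK^+$ Hurwitz and $P_{K^+}\preceq P$, when $P$ comes from policy evaluation.

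The paper's version is shorter and makes transparent why \eqref{math:obj_G} has its form: it is the data-driven Hamiltonian with $xx^\top$ replaced by $I_n$, as in Remark~\ref{remark:I_n}. It also never invokes $(A,B)$, even as a proof device.
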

\begin{proof}
    Substituting the CL parameterization \eqref{math:sys_closed}, the policy $u=\tilde{U}G x$, and the value function $V(x)=x^\top P x$ into the Hamiltonian \eqref{math:BE} gives
    \begin{align}
     H(x,G) & =  x^\top \bigl(Q \! +\!  G^\top \tilde{U}^\top R \tilde{U} G \!+\! P \bar{X} G \!+\! (\bar{X}G)^\top \! P \bigr) x  \\
         & = \operatorname{tr}\bigl( ( Q \! +\!  G^\top \tilde{U}^\top R \tilde{U} G \!+\! P \bar{X} G \!+\! (\bar{X}G)^\top P ) (x x^\top )  \bigr) \notag.
    \end{align} 
    Policy improvement minimizes $H$ for all $x$. Equivalently, it suffices to minimize $\sum_{i=1}^{n} H(x_i,G)$ for any basis $\{x_i\}_{i=1}^n$ of $\mathbb{R}^n$. Choosing the standard basis yields $\sum_{i=1}^{n} x_i x_i^\top = I_n$, hence minimizing $\sum_{i=1}^n H(x_i,G)$ is equivalent to minimizing \eqref{math:obj_G}. 
    The constraint $I_n = \tilde{X} G$ ensures a valid CL parameterization, yielding the constrained problem \eqref{math:opt_pol_eval}.
\end{proof}

By Lemma~\ref{lem:G_K}, the solution of \eqref{math:opt_pol_eval} is not unique. 
Before presenting the minimal norm solution, we first establish the following auxiliary result. Let $M \!\coloneq \! \tilde{U}^\top\! R \tilde{U}$ and $\Pi \! \coloneq \! I_T  - \! \tilde{X}^\dagger \! \tilde{X}$.

\begin{lemma}
    \label{lemma:aux}
    The Equations $(\Pi M \Pi)^\dagger \! = \!  (\tilde{U} \Pi)^\dagger R^{-1} ((\tilde{U} \Pi)^\dagger)^\top $ and $(\Pi M \Pi)^\dagger M = (\tilde{U}\Pi)^\dagger \tilde{U}$ hold.
\end{lemma}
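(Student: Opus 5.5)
The plan is to reduce both identities to the pseudoinverse of a matrix with full row rank, where $(\cdot)^\dagger$ has the explicit form $A^\top(AA^\top)^{-1}$. Set $A \coloneqq \tilde U \Pi \in \mathbb{R}^{m\times T}$.

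\emph{Step 1: properties of $\Pi$ and $A$.} Since $\tilde X^\dagger \tilde X$ is the orthogonal projector onto $\operatorname{im}(\tilde X^\top)$, the matrix $\Pi$ is the orthogonal projector onto $\ker(\tilde X)$. Hence $\Pi=\Pi^\top=\Pi^2$. It follows that $\Pi M\Pi = A^\top R A = W^\top W$ with $W \coloneqq R^{1/2}A$.

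Next I show that $A$ has full row rank $m$. Suppose $y^\top \tilde U\Pi = 0$. Then $\Pi\tilde U^\top y=0$, so $\tilde U^\top y \in \operatorname{im}(\tilde X^\top)$, i.e.\ $\tilde U^\top y = \tilde X^\top z$ for some $z$. This gives $\bigl[\begin{smallmatrix}\tilde U\\ \tilde X\end{smallmatrix}\bigr]^\top \bigl[\begin{smallmatrix} y\\ -z\end{smallmatrix}\bigr]=0$. By Assumption~\ref{ass:2} this stacked matrix has full row rank, so $y=0$. I expect this to be the only step with real content.

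\emph{Step 2: first identity.} Because $R^{1/2}$ is invertible, $W$ also has full row rank. Therefore
\begin{align*}
W^\dagger = A^\top R^{1/2}\bigl(R^{1/2}AA^\top R^{1/2}\bigr)^{-1} = A^\top (AA^\top)^{-1}R^{-1/2} = A^\dagger R^{-1/2}.
\end{align*}
I then use the standard identity $(W^\top W)^\dagger = W^\dagger (W^\dagger)^\top$, which is verifiable directly from the Penrose conditions or via the SVD. This gives $(\Pi M\Pi)^\dagger = A^\dagger R^{-1}(A^\dagger)^\top$, which is the first claim.

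\emph{Step 3: second identity.} Substituting the first identity,
\begin{align*}
(\Pi M\Pi)^\dagger M = A^\dagger R^{-1}(AA^\top)^{-1}A\,\tilde U^\top R\tilde U .
\end{align*}
By idempotence and symmetry of $\Pi$, $A\tilde U^\top = \tilde U\Pi\tilde U^\top = \tilde U\Pi\Pi^\top\tilde U^\top = AA^\top$. Hence $(AA^\top)^{-1}A\tilde U^\top = I_m$, and the expression collapses to $A^\dagger R^{-1}R\tilde U = (\tilde U\Pi)^\dagger\tilde U$.

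The main obstacle is Step 1's full-row-rank argument. Without it, $(R^{1/2}A)^\dagger \ne A^\dagger R^{-1/2}$ in general, and $(AA^\top)^{-1}$ would not exist, so both identities rest on that step.
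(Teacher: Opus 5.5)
Your proof is correct and follows essentially the same route as the paper. Both arguments derive full row rank of $\tilde U\Pi$ from Assumption~\ref{ass:2}, then obtain the first identity from the explicit right-inverse form: the paper checks the four Penrose conditions for $\Omega^\dagger R^{-1}(\Omega^\dagger)^\top$ directly, while you factor $\Pi M\Pi=W^\top W$ with $W=R^{1/2}\tilde U\Pi$ and invoke $(W^\top W)^\dagger=W^\dagger(W^\dagger)^\top$. The second identity then follows in both from $\tilde U(\tilde U\Pi)^\dagger=I_m$, which is the transpose of your $(AA^\top)^{-1}A\tilde U^\top=I_m$. Your stacked-matrix kernel argument for the rank step is slightly tighter than the paper's, which uses only $\operatorname{im}(\tilde U^\top)\cap\operatorname{im}(\tilde X^\top)=\{0\}$ and leaves the injectivity of $\tilde U^\top$ implicit.
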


The proof is provided in the Appendix~\ref{app:proof4}.

\begin{corollary}
    \label{col:1} 
    The  solution $\hat{G}$ of minimum Frobenius norm to the optimization problem \eqref{math:opt_pol_eval} is%
    \begin{align}
    \hat G & = \tilde{X}^\dagger - (\tilde U \Pi)^\dagger \bigl( \tilde{U} \tilde{X}^\dagger + R^{-1} (( \tilde{U} \Pi  )^\dagger)^\top (P \bar{X} )^\top \bigr), \label{math:pol_imp_sol} 
    \end{align} 
    where $\Pi = I_T - \tilde{X}^\dagger \tilde{X} $.
\end{corollary}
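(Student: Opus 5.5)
The plan is to eliminate the constraint $I_n=\tilde X G$ by parameterizing the affine subspace $\mathcal{S}$, which turns \eqref{math:opt_pol_eval} into an unconstrained convex quadratic problem. Its minimum-norm solution is then read off with pseudoinverse calculus and simplified using Lemma~\ref{lemma:aux}.

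\textbf{Step 1 (parameterize the constraint).} By Assumption~\ref{ass:2}, $\tilde X$ has full row rank, so $\tilde X\tilde X^\dagger=I_n$. Hence every feasible $G$ can be written as $G=\tilde X^\dagger+\Pi Z$ with $Z\in\mathbb{R}^{T\times n}$ free. Here $\Pi=I_T-\tilde X^\dagger\tilde X$ is the orthogonal projector onto $\ker\tilde X$; it is symmetric and idempotent. Without loss of generality we may take $Z=\Pi Z$, i.e., $Z\in\ker\tilde X$.

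\textbf{Step 2 (reduced objective and stationarity).} Substituting into \eqref{math:obj_G} and dropping constants, we minimize
\begin{align*}
 J(Z)=\operatorname{tr}\bigl((\tilde X^\dagger+\Pi Z)^\top M(\tilde X^\dagger+\Pi Z)\bigr)+2\operatorname{tr}\bigl(P\bar X\Pi Z\bigr).
\end{align*}
This function is convex because $M\succeq 0$. Setting the gradient to zero gives the normal equation
\begin{align*}
 \Pi M\Pi Z=-\Pi\bigl(M\tilde X^\dagger+\bar X^\top P\bigr).
\end{align*}
Its right-hand side lies in $\operatorname{im}(\Pi)$. Moreover, $\operatorname{im}(\Pi M\Pi)=\operatorname{im}(\Pi\tilde U^\top)$, and we argue that these ranges are compatible, i.e. the system is consistent. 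This holds because the cost is bounded below on the relevant directions: if $\tilde U\Pi z=0$ for some $z\in\ker\tilde X$, then $\begin{bmatrix}\tilde U\\\tilde X\end{bmatrix}z=0$, and then $\bar Xz=A\tilde Xz+B\tilde Uz=0$ by \eqref{math:ex_sys_int}, so the linear term vanishes in that direction as well. Among all solutions $Z\in\ker\tilde X$, the minimum-norm one is
\begin{align*}
 Z=-(\Pi M\Pi)^\dagger\bigl(M\tilde X^\dagger+\bar X^\top P\bigr).
\end{align*}
Since $(\Pi M\Pi)^\dagger$ has range in $\operatorname{im}\Pi$, we get $\Pi Z=Z$. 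The resulting $\hat G=\tilde X^\dagger+Z$ is the minimum Frobenius norm solution, because $\tilde X^\dagger\perp\ker\tilde X$ column-wise and $Z$ has minimal norm within $\ker\tilde X$.

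\textbf{Step 3 (simplify with Lemma~\ref{lemma:aux}).} Using $(\Pi M\Pi)^\dagger M=(\tilde U\Pi)^\dagger\tilde U$, we obtain $(\Pi M\Pi)^\dagger M\tilde X^\dagger=(\tilde U\Pi)^\dagger\tilde U\tilde X^\dagger$. Using $(\Pi M\Pi)^\dagger=(\tilde U\Pi)^\dagger R^{-1}((\tilde U\Pi)^\dagger)^\top$, the second term becomes $(\tilde U\Pi)^\dagger R^{-1}((\tilde U\Pi)^\dagger)^\top(P\bar X)^\top$. Adding $\tilde X^\dagger$ yields exactly \eqref{math:pol_imp_sol}.

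\textbf{Main obstacle.} I expect the delicate point to be Step 2: justifying both consistency of the normal equation and the minimum-norm claim over all of $G$. The consistency relies on the data identity $\bar X=A\tilde X+B\tilde U$, which places $\Pi\bar X^\top P$ in $\operatorname{im}(\Pi\tilde U^\top)$. The minimum-norm claim follows from the orthogonal decomposition $\operatorname{im}(\tilde X^\top)\oplus\ker\tilde X$ of $\mathbb{R}^T$.
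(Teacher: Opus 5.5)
Your proof is correct and follows essentially the same route as the paper. The paper forms the Lagrangian and eliminates the multiplier by left-multiplying the stationarity condition by $\Pi$, while you substitute $G=\tilde X^\dagger+\Pi Z$ directly into the objective; both then arrive at the same normal equation $\Pi M\Pi Z=-\Pi\bigl(M\tilde X^\dagger+(P\bar X)^\top\bigr)$, take its minimum-norm solution, and simplify with Lemma~\ref{lemma:aux}. Your consistency argument via $\bar X z=A\tilde X z+B\tilde U z=0$, and your use of the orthogonality of $\tilde X^\dagger$ to $\ker\tilde X$ to pass from the minimum-norm $Z$ to the minimum-norm $G$, justify points the paper leaves implicit.
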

\begin{proof}
    Let $\Lambda \in\mathbb{R}^{n\times n}$. The Lagrangian to \eqref{math:opt_pol_eval} is $
     \mathcal{L}  =  \operatorname{tr} \!\big(G^\top \! M G + 2 P \bar{X} G + Q  + \Lambda^{\!\top}\! (I_n - \tilde{X} G) \big).$  Stationarity requires 
    \begin{align}
     \frac{\partial \mathcal{L} }{\partial G } = 2 MG + 2 (P \bar{X})^\top - \tilde{X}^\top \Lambda = 0. \label{math:stat} 
    \end{align} 
    Left-multiplying \eqref{math:stat} by $\Pi$ yields 
    \begin{align}
      0 & 
      = \Pi M G + \Pi (P \bar{X}  )^{\!\top}\label{math:stat2} 
    \end{align} 
    as $\Pi \tilde{X}^\top = 0$. The constraint $\tilde{X} G = I_n $ has the general solution $\hat G = \tilde{X}^\dagger + \Pi V$ for arbitrary $V\in\mathbb{R}^{T \times n}$. Substituting $\hat G$ into \eqref{math:stat2} results in $0= \Pi M (\tilde{X}^\dagger + \Pi V ) + \Pi (P \bar{X} )^\top$, where its minimum-norm solution with respect to $V$ is $\hat V = - (\Pi M \Pi)^\dagger  \Pi  \bigl( M \tilde{X}^\dagger + (P\bar{X} )^\top \bigr)$. 
    Let $\Omega = \tilde{U} \Pi$.
By Lemma~\ref{lemma:aux}, $ (\Pi M \Pi)^\dagger \Pi = \Omega^\dagger R^{-1} (\Omega^\dagger)^\top \Pi = \Omega^\dagger R^{-1} (\Omega \Omega^\top)^{-1} \tilde{U} \Pi \Pi$ = $\Omega^\dagger R^{-1} (\Omega^\dagger)^\top$, since $\Pi$ is idempotent. 
    Substituting $\hat{V}$ into $\hat{G}$ and using the identity from Lemma~\ref{lemma:aux} implies $\hat{G}$ is \eqref{math:pol_imp_sol}.
\end{proof}

\begin{lemma}
    \label{lemma:same_imp} 
    Let $P\succeq 0$. The policies obtained from the model-based and CL-parameterized policy improvement step coincide. In particular, $K=R^{-1}B^\top P = -\tilde{U}\hat{G}$, where $\hat{G}$ is defined in \eqref{math:pol_imp_sol}.
\end{lemma}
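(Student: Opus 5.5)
The plan is a direct computation: substitute the data identity \eqref{math:ex_sys_int}, $\bar X = A\tilde X + B\tilde U$, into $-\tilde U\hat G$ with $\hat G$ from \eqref{math:pol_imp_sol}, and show that everything except $R^{-1}B^\top P$ cancels. Throughout I write $\Omega \coloneqq \tilde U\Pi$ with $\Pi = I_T - \tilde X^\dagger\tilde X$, as in the proof of Corollary~\ref{col:1}. Since the model-based improvement \eqref{math:P_imp} is $K=R^{-1}B^\top P$, this identity is exactly the claim. (One could add a remark that $\hat G$ is a genuine minimizer of \eqref{math:opt_pol_eval}: the objective is a convex quadratic because $M\succeq 0$ and the constraint is affine, so the stationarity conditions used in Corollary~\ref{col:1} are sufficient.)

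\textbf{Step 1: $\Omega$ has full row rank $m$.} Suppose $y^\top\tilde U\Pi = 0$. Then $\tilde U^\top y \in \ker(\Pi) = \operatorname{im}(\tilde X^\top)$, since $\Pi$ is the orthogonal projector onto $\ker\tilde X$. So $\tilde U^\top y = \tilde X^\top z$ for some $z$, i.e.\ $\begin{bmatrix}\tilde U^\top & \tilde X^\top\end{bmatrix}\begin{bmatrix} y\\ -z\end{bmatrix}=0$. Assumption~\ref{ass:2} then forces $y=0$. Hence $\Omega\Omega^\top$ is invertible and
\begin{align*}
\Omega^\dagger = \Pi\tilde U^\top(\Omega\Omega^\top)^{-1}.
\end{align*}
This step is the main point requiring care; the rest is bookkeeping.

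\textbf{Step 2: two identities.} Using $\Pi^\top=\Pi=\Pi^2$ and $\tilde X\Pi=0$:
\begin{align*}
\tilde U\Omega^\dagger &= \tilde U\Pi\Pi\tilde U^\top(\Omega\Omega^\top)^{-1} = \Omega\Omega^\top(\Omega\Omega^\top)^{-1} = I_m,\\
\bar X\Omega^\dagger &= (A\tilde X + B\tilde U)\,\Pi\tilde U^\top(\Omega\Omega^\top)^{-1} = A\,\tilde X\Pi\,\tilde U^\top(\Omega\Omega^\top)^{-1} + B\,\tilde U\Omega^\dagger = B.
\end{align*}
The second line uses $\tilde X\Pi=0$ for the first term and the first identity for the second term.

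\textbf{Step 3: conclude.} Expanding \eqref{math:pol_imp_sol} and applying the two identities gives
\begin{align*}
-\tilde U\hat G &= -\tilde U\tilde X^\dagger + \tilde U\Omega^\dagger\bigl(\tilde U\tilde X^\dagger + R^{-1}(\Omega^\dagger)^\top\bar X^\top P\bigr)\\
&= R^{-1}(\bar X\Omega^\dagger)^\top P = R^{-1}B^\top P.
\end{align*}
The terms $\pm\tilde U\tilde X^\dagger$ cancel because $\tilde U\Omega^\dagger = I_m$, and $(\Omega^\dagger)^\top\bar X^\top = (\bar X\Omega^\dagger)^\top = B^\top$.

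By Lemma~\ref{lem:G_K}(3), every minimizer of \eqref{math:opt_pol_eval} yields a unique gain $K=-\tilde U G$. The computation above shows that for $\hat G$ this gain coincides with the model-based update \eqref{math:P_imp}, which proves the lemma.
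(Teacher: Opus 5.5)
Your argument is correct, but it takes a different route from the paper's.

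\textbf{The paper's route.} The paper never uses the closed form \eqref{math:pol_imp_sol}. It substitutes $\tilde U G=-K$ and $\bar X=A\tilde X+B\tilde U$ into the stationarity condition \eqref{math:stat}, which then reads $\tilde U^\top(2B^\top P-2RK)+\tilde X^\top(2A^\top P-\Lambda)=0$. Full row rank of $\left[\begin{smallmatrix}\tilde U\\ \tilde X\end{smallmatrix}\right]$ then gives $RK=B^\top P$. The paper phrases this last step as first reading off $\Lambda=2A^\top P$ and then using full row rank of $\tilde U$.

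\textbf{Your route.} You evaluate $\tilde U\hat G$ directly through $\tilde U\Omega^\dagger=I_m$ and $\bar X\Omega^\dagger=B$. The second identity is exactly the first claim of the unlabeled lemma the paper proves at the start of Section~\ref{sec:CARE}. Your Step~1 is the rank argument from the appendix proof of Lemma~\ref{lemma:aux}.

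\textbf{What each buys.} Your route checks the explicit formula itself. It therefore does not depend on the paper's assertion that $\hat G$ satisfies \eqref{math:stat} ``by definition''. That assertion tacitly needs the projected equation \eqref{math:stat2} to be consistent, so that its minimum-norm least-squares solution actually solves it. The paper's stationarity argument, in turn, applies to every KKT point, and hence to every minimizer of the convex problem \eqref{math:opt_pol_eval}. That generality is what the first sentence of the lemma and Remark~\ref{remark:min_G} implicitly rely on.

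\textbf{One point to fix.} Your closing appeal to Lemma~\ref{lem:G_K}(3) does not give this generality. That item only says the map $G\mapsto-\tilde UG$ is single-valued; it does not say that distinct minimizers share a gain. If you want the statement for all minimizers, add one line. Two minimizers differ by some $W$ with $\tilde XW=0$, and the objective is constant along the segment joining them. Hence $\operatorname{tr}(W^\top MW)=\Vert R^{1/2}\tilde UW\Vert_F^2=0$, so $\tilde UW=0$.
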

\begin{proof}
We verify that $K= R^{-1}B^\top P$ satisfies the stationary condition \eqref{math:stat}, which holds for $\hat{G}$ by definition.
Substituting $\tilde{U}G = - R^{-1} B^\top P $ and \eqref{math:ex_sys_int} into \eqref{math:stat} results in 
\begin{align}
 0 & = - 2 \tilde{U}^\top B^\top P + 2 \bigl(P (A\tilde X + B \tilde U ) \bigr)^\top - \tilde{X}^\top \Lambda  \\
 & = \tilde{X}^\top (2 A^\top P - \Lambda), \notag
\end{align} 
and simplifies to $\Lambda = 2 A^\top P $. Substituting this into \eqref{math:stat} yields 
\begin{align}
 0 & = - 2\tilde{U}^\top  R K + 2 \bigl(P (A\tilde{X} + B \tilde{U} )\bigr)^\top - 2 A^\top P \\
 & = 2 \tilde{U}^\top(B^\top P - RK), \notag
\end{align} 
which implies $K = R^{-1} B^\top P$ since $\tilde{U}$ has full row rank.
\end{proof}

With the policy improvement step established, the PI algorithm under the CL parameterization is given in Algorithm~\ref{alg:PI_closed}.
\begin{algorithm}[t]
\caption{PI under CL parameterization}
\label{alg:PI_closed}
\KwIn{$G_0\in\mathcal{G}$ }
\KwOut{Optimal $G^*\in\mathcal{G}^*$}
$k=0$ \\
\While{$G_k \ne G_{k+1}$}{
 Evaluate $G_k$ with \eqref{math:Pol_eval} to obtain $P_k$ \\
  Improve policy $u=\tilde{U}G_kx$ via \eqref{math:pol_imp_sol} to obtain $G_{k+1}$ \\
  $k=k+1$
}
\KwRet $G_k$
\end{algorithm}

\begin{theorem}
The sequence $\{G_k\}_{k=0}^\infty$ generated by Algorithm~\ref{alg:PI_closed} converges to $G^*\in\mathcal{G}^*$ such that $K^*=-\tilde{U} G^*$ is the optimal LQR gain. Moreover, $G_k\in\mathcal{G}$ and thus $K_k =-\tilde{U} G_k\in\mathcal{K}$ for all $k\in \mathbb{N}_0$.
\end{theorem}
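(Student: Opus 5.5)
The plan is to reduce Algorithm~\ref{alg:PI_closed} to Kleinman's algorithm \eqref{math:kleinman} and then invoke Theorem~\ref{theo:PI}. Set $K_k \coloneqq -\tilde{U}G_k$ and let $P_k$ be the solution of \eqref{math:Pol_eval} for $G=G_k$. I will show by induction on $k$ that $G_k\in\mathcal{G}$, that $K_k$ coincides with the $k$-th Kleinman iterate started from $K_0=-\tilde U G_0$, and that $P_k$ coincides with the corresponding Kleinman value matrix.

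For the base case, $G_0\in\mathcal{G}$ is given, so $K_0\in\mathcal{K}$ by the second characterization of $\mathcal{G}$ in \eqref{math:G}. For the policy-evaluation step, Theorem~\ref{theo:closed} applied to any $G\in\mathcal{G}$ gives $\bar X G = A-BK$ and $\tilde U G=-K$; here one uses \eqref{math:ex_sys_int} together with $\tilde X G=I_n$. Substituting these identities shows that \eqref{math:Pol_eval} is literally the Lyapunov equation \eqref{math:P_eva} with $K_k$. By Lemma~\ref{lemma:pos_s}, its unique solution is $P_k\succeq 0$, and it equals the Kleinman $P_k$.

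For the improvement step, Lemma~\ref{lemma:same_imp} applied with $P=P_k\succeq 0$ gives $-\tilde U\hat G = R^{-1}B^\top P_k$, which is exactly the Kleinman update \eqref{math:P_imp}. Hence $K_{k+1}=-\tilde U G_{k+1}$ is the next Kleinman iterate. Theorem~\ref{theo:PI} then gives $K_{k+1}\in\mathcal{K}$. Since $\hat G$ satisfies $\tilde X\hat G=I_n$ by construction in Corollary~\ref{col:1}, we have $\bar X G_{k+1}=A-BK_{k+1}$, which is Hurwitz, so $G_{k+1}\in\mathcal{G}$. This closes the induction.

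Convergence of $K_k\to K^*$ and $P_k\to P^*$ then follows directly from Theorem~\ref{theo:PI}. The main obstacle is passing from convergence of $K_k$ to convergence of $G_k$ itself, because $G$ is non-unique (Lemma~\ref{lem:G_K}). I plan to use the explicit formula \eqref{math:pol_imp_sol}. There $G_{k+1}$ depends affinely and continuously on $P_k$ through the fixed data matrices $\tilde X^\dagger$ and $(\tilde U\Pi)^\dagger$, where $\tilde U\Pi$ has full row rank by Assumption~\ref{ass:2}, as used in Lemma~\ref{lemma:aux}. Therefore $P_k\to P^*$ implies
\begin{align*}
G_k\to G^*\coloneqq \tilde{X}^\dagger - (\tilde U \Pi)^\dagger \bigl( \tilde{U} \tilde{X}^\dagger + R^{-1} (( \tilde{U} \Pi )^\dagger)^\top (P^* \bar{X} )^\top \bigr).
\end{align*}
Finally, by Lemma~\ref{lemma:same_imp}, $-\tilde U G^*=R^{-1}B^\top P^*=K^*$, and $\tilde X G^*=I_n$. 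By Lemma~\ref{lem:G_K}, $G^*$ therefore has the form $[\tilde U;\tilde X]^\dagger[-K^*;I_n]+N$ with $N$ in the null space of $[\tilde U;\tilde X]$, so $G^*\in\mathcal{G}^*$.
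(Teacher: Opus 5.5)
Your proposal is correct and follows the same route as the paper: you identify Algorithm~\ref{alg:PI_closed} with Kleinman's iteration via Theorem~\ref{theo:closed} for evaluation and Lemma~\ref{lemma:same_imp} for improvement, invoke Theorem~\ref{theo:PI}, and use $\tilde X G_{k+1}=I_n$ to conclude $G_{k+1}\in\mathcal{G}$. Your extra step, deducing $G_k\to G^*$ from $P_k\to P^*$ through the affine dependence of \eqref{math:pol_imp_sol} on $P_k$, makes explicit the convergence of $G_k$ itself, which the paper's proof only asserts through the convergence of $K_k$.
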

\begin{proof}
    Algorithm~\ref{alg:PI_closed} is equivalent to Kleinman's Algorithm~\eqref{math:kleinman}, with the difference lying solely in the parameterization, expressing the policy via $G$ instead of $K$. 
    The equivalence of the policy evaluation and policy improvement  follows from Theorem~\ref{theo:closed} and Lemma~\ref{lemma:same_imp}, respectively.
    Consequently, both algorithms generate identical sequences $\{P_k\}_{k=0}^\infty$ and $\{K_k\}_{k=0}^\infty = \{-\tilde{U}G_k\}_{k=0}^\infty$. 
Convergence of $\{K_k\}_{k=0}^\infty$ to $K^*$ and the Hurwitz property of $A_{K_k}=\bar{X}G_k$ follow from Theorem~\ref{theo:PI}. Finally, the projection in \eqref{math:pol_imp_sol} enforces $I_n=\tilde{X}G_k$, which implies $G_k\in\mathcal{G}$ and thus $K_k\in\mathcal{K}$ for all $k$.\end{proof}

\begin{remark}
    \label{remark:eff} 
    Algorithm~\ref{alg:PI_closed} is computationally efficient, requiring only solving a Lyapunov equation in each iteration since $\tilde{X}^\dagger, \Pi, M,$ and $(\tilde U \Pi)^\dagger$ are computed once. Unlike gradient descent methods, it does not require step size considerations.
\end{remark}

\begin{remark}
    In contrast to the data-driven PI from \cite{Lopez2023}, Algorithm~\ref{alg:PI_closed} does not require the solution of a generalized Sylvester-transpose matrix equation and uses all samples instead of only $n+m$ linearly independent columns of $\left[\begin{smallmatrix}\tilde{U} \\ \tilde{X}\end{smallmatrix}\right]$.
\end{remark}

\subsection{IRL Parameterization}
For completeness, the PI algorithm under IRL parameterization from \cite{JIANG20122699} is given in Algorithm~\ref{alg:PI_IRL}.

\begin{algorithm}[t]
\caption{PI under IRL parameterization \cite{JIANG20122699}}
\label{alg:PI_IRL}
\KwIn{$K_0\in\mathcal{K}$ }
\KwOut{Optimal $K^*$}
$k=0$ \\
\While{$P_k \ne P_{k+1}$}{
 Solve \eqref{math:LSE} using $K_k$ to obtain $P_k$ and $B^\top P_k$ \\
 Improve policy via $K_{k+1}=R^{-1}B^\top P_k$ \\
  $k=k+1$
}
\KwRet $P_k$
\end{algorithm}

\begin{theorem}[\texorpdfstring{\cite[Theorem 7]{JIANG20122699}}{}]
   The sequences $\{K_k\}_{k=0}^\infty$ and $\{P_k\}_{k=0}^\infty$ generated by Algorithm~\ref{alg:PI_IRL} converge to the optimal feedback $K^*$ and the value matrix $P^*$ for $k\to\infty$. Moreover, $K_k\in\mathcal{K}$ for all $k\in\mathbb{N}_0$.
\end{theorem}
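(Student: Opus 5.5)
The plan is to show that Algorithm~\ref{alg:PI_IRL} produces exactly the same iterates as Kleinman's algorithm \eqref{math:kleinman}. Convergence and stability then follow directly from Theorem~\ref{theo:PI}. The argument is by induction on $k$, with the induction hypothesis that $K_k\in\mathcal{K}$ and that $K_k$ equals the $k$-th Kleinman iterate started from the same $K_0$.

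The base case is the hypothesis $K_0\in\mathcal{K}$.

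For the induction step, suppose $K_k\in\mathcal{K}$.
\begin{itemize}
\item By Assumption~\ref{ass:rank_2} and Lemma~\ref{lemma:rank}, $\Phi$ built with $K_k$ has full column rank. Hence \eqref{math:LSE} has the unique solution $\hat\theta$ given by \eqref{math:theta}.
\item By Lemma~\ref{lemma:pol_imp}, the $P$-block of $\hat\theta$ equals the unique solution $P_k$ of the Lyapunov equation \eqref{math:P_eva} with gain $K_k$.
\item Lemma~\ref{lemma:pol_imp} is stated only for $\hat P$. I will extend it to the second block by rereading its proof: the true pair $\theta_0=[\operatorname{vech}(P_k)^\top\ \operatorname{vec}(B^\top P_k)^\top]^\top$ satisfies \eqref{math:LSE}, because the identity \eqref{math:off_pol_eval} holds along the measured trajectory for every interval. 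Uniqueness then gives $\hat\theta=\theta_0$, so the estimated $B^\top P_k$ block is exactly $B^\top P_k$.
\item Consequently the update $K_{k+1}=R^{-1}B^\top P_k$ coincides with the Kleinman improvement \eqref{math:P_imp}.
\item Theorem~\ref{theo:PI} guarantees that the Kleinman iterate $K_{k+1}$ is stabilizing, so $K_{k+1}\in\mathcal{K}$. This closes the induction.
\end{itemize}

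Once the two sequences are identified, Theorem~\ref{theo:PI} gives $K_k\to K^*$ and $P_k\to P^*$ with $P^*\preceq P_{k+1}\preceq P_k$, and in particular $K_k\in\mathcal{K}$ for all $k$.

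The main obstacle is the well-posedness of the least-squares step for each new $K_k$. The matrix $\Phi$ depends on $K$ through $E(K)$, so I must check that the rank condition on $[\Gamma^{xx}\ \Gamma^{ux}]$ yields full column rank of $\Phi$ for \emph{every} $K$, and not only for the gain used to collect data. I would verify this directly.
\begin{itemize}
\item Suppose $\Phi\theta=0$ with $\theta=(\operatorname{vech}(P),\operatorname{vec}(W))$.
\item Using \eqref{math:alt_1}, $\Gamma^{\Delta x}D\operatorname{vech}(P)$ can be rewritten in terms of $\Gamma^{xx}$ and $\Gamma^{ux}$ with coefficients $A^\top P+PA$ and $B^\top P$.
\item Collecting terms gives $\Gamma^{xx}\operatorname{vec}(\cdot)+\Gamma^{ux}\operatorname{vec}(\cdot)=0$ with coefficient blocks $A_K^\top P+PA_K+K^\top(B^\top P-W)+(B^\top P-W)^\top K$ and $2(B^\top P-W)$.
\item The rank condition \eqref{math:rank_cond} forces both coefficients to vanish, since the $\Gamma^{xx}$ coefficient is symmetric and only its $\operatorname{vech}$ part matters. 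Hence $W=B^\top P$, and then $A_K^\top P+PA_K=0$.
\item Because $K\in\mathcal{K}$, $A_K$ is Hurwitz, so this Lyapunov equation forces $P=0$, and then $W=0$.
\end{itemize}
This is precisely the content of the cited \cite[Lemma 6]{JIANG20122699}, and it is where stabilizability of the current iterate is essential.
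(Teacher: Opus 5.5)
Your proof is correct and takes essentially the same route the paper relies on: the paper cites this result without proof, and its own proof of the CL analogue (Algorithm~\ref{alg:PI_closed}) likewise shows that the data-driven iterates coincide with Kleinman's and then invokes Theorem~\ref{theo:PI}. Your explicit check that $\Phi$ has full column rank for every stabilizing $K_k$, using \eqref{math:alt_1} and the Hurwitz property of $A_{K_k}$, is exactly the content of the cited Lemma~6, and it rightly supplies the dependence on $K$ that Lemma~\ref{lemma:rank} leaves implicit, so that \eqref{math:LSE} is uniquely solvable at every iteration.
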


We now compare Algorithms~\ref{alg:PI_closed} and \ref{alg:PI_IRL}. 
Algorithm~\ref{alg:PI_closed} evaluates the policy by solving an $n\times n$ Lyapunov equation, yielding a per-iteration cost of $\mathcal{O}(n^3)$ using standard dense linear algebra, while the policy improvement is computationally light after preprocessing (see Remark~\ref{remark:eff}). In contrast, Algorithm~\ref{alg:PI_IRL} performs policy evaluation via a least-squares problem with $d=\tfrac{n(n+1 )}{2}+mn$ unknowns, resulting in a complexity of $\mathcal{O} (T n^4)$, and requires higher sample complexity. Consequently, Algorithm~\ref{alg:PI_closed} is typically more computationally efficient, especially for large state dimensions.

\section{Riccati Flows and Value Iteration Algorithms}
\label{sec:CARE} 
This section presents data-driven formulations for the CARE, forming the basis for Riccati flows and VI algorithms.
The CL parameterization yields novel results, whereas the IRL-based formulations are from \cite{BIAN2016348}.

\subsection{Closed-loop Parameterization}
A natural approach to obtain a CARE is to substitute the improved policy \eqref{math:pol_imp_sol} into the policy evaluation~\eqref{math:Pol_eval}, yielding
\begin{align}
  &  \bigl(  \bar{X} ( I - (\tilde U \Pi)^\dagger \tilde U) \tilde{X}^\dagger \bigr)^\top \! P + P \bigl(   \bar{X} ( I - (\tilde U \Pi)^\dagger \tilde U) \tilde{X}^\dagger \bigr) \notag \\
  & \quad  - P \bar{X}(\tilde{U}\Pi )^\dagger R^{-1} \bigl( \bar{X}(\tilde{U}\Pi )^\dagger \bigr)^\top  P      \label{math:care_CL_simplified} + Q = 0 
\end{align}
with coefficients expressed entirely in terms of data. 
The structure of \eqref{math:care_CL_simplified} is analogous to the CARE \eqref{math:CARE}.
In the following, we establish that the coefficients in \eqref{math:care_CL_simplified} corresponding to $A$ and $B$ coincide with the true system matrices.

\begin{lemma}
    The equations $B=\bar{X}(\tilde{U}\Pi )^\dagger$ and
    $A= \bar{X} (I_T - (\tilde{U} \Pi)^\dagger \tilde{U}) \tilde{X}^\dagger$ hold.
\end{lemma}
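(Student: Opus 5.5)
The plan is to substitute the data relation \eqref{math:ex_sys_int}, $\bar X = A\tilde X + B\tilde U$, into both right-hand sides. We then simplify using two facts: (i) $\tilde X\tilde X^\dagger = I_n$, and (ii) $\tilde U\Pi(\tilde U\Pi)^\dagger = I_m$. Fact (i) holds because $\tilde X$ has full row rank $n$, which is implied by Assumption~\ref{ass:2}. Fact (ii) holds provided $\tilde U\Pi$ has full row rank $m$, so that $(\tilde U\Pi)^\dagger$ is a right inverse.

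\textbf{Establishing (ii).} This is the main point to check. Since $\Pi = I_T-\tilde X^\dagger\tilde X$ is the orthogonal projector onto $\ker(\tilde X)$, we need $\operatorname{rank}(\tilde U\Pi)=m$. Suppose $v^\top\tilde U\Pi = 0$ for some $v\in\mathbb{R}^m$. Then $\Pi\tilde U^\top v=0$, so $\tilde U^\top v\in\operatorname{im}(\tilde X^\top)$. Hence $\tilde U^\top v = \tilde X^\top w$ for some $w$, i.e., $\left[\begin{smallmatrix}\tilde U\\ \tilde X\end{smallmatrix}\right]^\top \left[\begin{smallmatrix} v\\ -w\end{smallmatrix}\right]=0$. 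By Assumption~\ref{ass:2} this forces $v=0$ (and $w=0$). Therefore $\tilde U\Pi$ has full row rank, $(\tilde U\Pi)^\dagger = (\tilde U\Pi)^\top(\tilde U\Pi(\tilde U\Pi)^\top)^{-1}$, and $\tilde U\Pi(\tilde U\Pi)^\dagger = I_m$.

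\textbf{Identity for $B$.} The range of $(\tilde U\Pi)^\dagger = \Pi\tilde U^\top(\cdot)$ lies in $\operatorname{im}(\Pi)=\ker(\tilde X)$, so $\tilde X(\tilde U\Pi)^\dagger = 0$. Moreover, $\tilde U(\tilde U\Pi)^\dagger = \tilde U\Pi(\tilde U\Pi)^\dagger = I_m$, using $\Pi^2=\Pi$ and that the pseudoinverse has the form $\Pi(\cdot)$. Hence
\[
\bar X(\tilde U\Pi)^\dagger = A\tilde X(\tilde U\Pi)^\dagger + B\tilde U(\tilde U\Pi)^\dagger = B.
\]

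\textbf{Identity for $A$.} Expanding gives
\[
\bar X(I_T-(\tilde U\Pi)^\dagger\tilde U)\tilde X^\dagger = \bar X\tilde X^\dagger - \bar X(\tilde U\Pi)^\dagger\tilde U\tilde X^\dagger = A + B\tilde U\tilde X^\dagger - B\tilde U\tilde X^\dagger = A,
\]
using $\tilde X\tilde X^\dagger=I_n$ and the $B$ identity just shown.

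The only nontrivial step is the rank argument for $\tilde U\Pi$ together with the range inclusion $\operatorname{im}((\tilde U\Pi)^\dagger)\subseteq\ker\tilde X$. Once these are in place, the remaining algebra is routine.
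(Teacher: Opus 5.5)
Your proof is correct and follows essentially the same route as the paper. Both substitute $\bar X = A\tilde X + B\tilde U$ and use that $\tilde U\Pi$ has full row rank (the paper takes this from the proof of Lemma~\ref{lemma:aux} via $\operatorname{im}(\tilde U^\top)\cap\operatorname{im}(\tilde X^\top)=\{0\}$, which is exactly your kernel argument), together with $\tilde X\Pi = 0$ and $\tilde X\tilde X^\dagger = I_n$. The differences are cosmetic: the paper right-multiplies by $\Pi$ before inverting to get $B$, and it packages the $A$ computation through $E = I_T - (\tilde U\Pi)^\dagger\tilde U$ with $\tilde U E = 0$, whereas you expand and cancel using the $B$ identity.
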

\begin{proof}
    Right-multiplying $\Pi$ to \eqref{math:ex_sys_int} results in 
   $  \bar{X} \Pi = A \tilde{X} \Pi + B \tilde{U} \Pi = B \tilde U \Pi$,     because $\tilde X \Pi = 0$.  The full row rank of $\tilde{U}\Pi$ (see Proof of Lemma~\ref{lemma:aux}) implies $B = \bar{X} \Pi (\tilde{U}\Pi )^\dagger  = \bar{X} (\tilde U \Pi )^\dagger$ as $\Pi$ is idempotent.  Next, let  $E = I_T - (\tilde{U} \Pi)^\dagger \tilde{U}$. Then $\tilde U E  = \tilde U - \tilde{U} \Pi \tilde{U}^\top (\tilde{U}\Pi \tilde{U})^{-1} \tilde{U}= 0$. Finally,%
    \begin{align}
     \bar{X} E \tilde{X}^\dagger &= ( A \tilde{X} + B \tilde{U}) E \tilde{X}^\dagger = A \tilde{X} E \tilde{X}^\dagger  \\
     & = A \tilde{X} (I_T - \Pi \tilde{U}^\top (\tilde{U}\Pi (\tilde{U}\Pi)^\top)^{-1} \tilde{U})\tilde{X}^\dagger  = A \notag 
    \end{align}%
    because $\tilde{X}\Pi =0$ and $\tilde{X}\tilde{X}^\dagger = I_n$.
\end{proof}

\begin{remark}
    \label{remark:min_G}Note that \eqref{math:care_CL_simplified} is not the only valid CARE, any solution $G$ of \eqref{math:opt_pol_eval} substituted into \eqref{math:Pol_eval} yields one.
\end{remark}

We next verify that the obtained data-driven expressions for $A$ and $B$ coincide with their least-squares estimates, given by
\begin{align}
 \begin{bmatrix}  B_{\text{LS}} & A_{\text{LS}} \end{bmatrix} & = \bar{X} \begin{bmatrix}\tilde U \\ \tilde{X} \end{bmatrix}^\dagger \label{math:LS_id}. 
\end{align}
\begin{proposition}
    \label{prop:LS} 
    Let $   A  = \bar{X} (I_T - (\tilde{U} \Pi)^\dagger \tilde{U}) \tilde{X}^\dagger $ and $ B  = \bar{X} (\tilde{U} \Pi   )^\dagger $. Then, $A$ and $B$ coincide with the least-squares estimates $A_{\mathrm{LS}}$ and $B_{\mathrm{LS}}$ of \eqref{math:LS_id}, respectively.
\end{proposition}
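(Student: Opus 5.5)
The plan is to prove an algebraic identity that does not depend on how $\bar X$ was generated, namely
\begin{align}
 F \coloneqq \begin{bmatrix} (\tilde U \Pi)^\dagger & (I_T - (\tilde U \Pi)^\dagger \tilde U)\tilde X^\dagger \end{bmatrix} = W^\dagger, \quad W \coloneqq \begin{bmatrix}\tilde U \\ \tilde X\end{bmatrix}.
\end{align}
Right-multiplying this identity by $\bar X$ then gives $\bar X (\tilde U\Pi)^\dagger = B_{\mathrm{LS}}$ and $\bar X(I_T-(\tilde U\Pi)^\dagger\tilde U)\tilde X^\dagger = A_{\mathrm{LS}}$ directly from \eqref{math:LS_id}. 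This shows the statement is not just a consequence of exact data, where both sides equal the true $(A,B)$ by the preceding lemma and because $\bar X W^\dagger = [B~A]WW^\dagger = [B~A]$. It continues to hold for data that only approximately satisfy \eqref{math:ex_sys_int}.

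To prove $F = W^\dagger$, I use two facts. Under Assumption~\ref{ass:2}, $W$ has full row rank, so $W^\dagger = W^\top(WW^\top)^{-1}$. This is the unique right inverse of $W$ whose columns lie in $\operatorname{im}(W^\top)$: if $F$ is another such right inverse, then $W(F-W^\dagger)=0$ puts the columns of $F-W^\dagger$ in $\ker W$. Those columns also lie in $\operatorname{im}(W^\top) = (\ker W)^\perp$, so $F - W^\dagger = 0$. It therefore suffices to verify (i) $WF = I_{m+n}$ and (ii) $\operatorname{im}(F)\subseteq \operatorname{im}(W^\top)$.

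For (i), I use $(\tilde U\Pi)^\dagger = \Pi\tilde U^\top(\tilde U\Pi\tilde U^\top)^{-1}$. This uses the full row rank of $\tilde U\Pi$ established in the proof of Lemma~\ref{lemma:aux}, together with $\Pi^2=\Pi=\Pi^\top$. The four blocks of $WF$ then follow:
\begin{itemize}
\item $\tilde U(\tilde U\Pi)^\dagger = I_m$;
\item $\tilde X(\tilde U\Pi)^\dagger = 0$, since $\tilde X\Pi = 0$;
\item $\tilde U(I_T-(\tilde U\Pi)^\dagger\tilde U)\tilde X^\dagger = (\tilde U - \tilde U)\tilde X^\dagger = 0$;
\item $\tilde X(I_T-(\tilde U\Pi)^\dagger\tilde U)\tilde X^\dagger = \tilde X\tilde X^\dagger = I_n$.
\end{itemize}
These are exactly the manipulations already used in the preceding lemma.

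For (ii), note that $\operatorname{im}(W^\top) = \operatorname{im}(\tilde U^\top) + \operatorname{im}(\tilde X^\top)$. First, $\tilde X^\dagger = \tilde X^\top(\tilde X\tilde X^\top)^{-1}$ has columns in $\operatorname{im}(\tilde X^\top)$. Next, $\Pi\tilde U^\top = \tilde U^\top - \tilde X^\dagger\tilde X\tilde U^\top$ has columns in $\operatorname{im}(\tilde U^\top)+\operatorname{im}(\tilde X^\top)$, and hence so does $(\tilde U\Pi)^\dagger$. Finally, $(I_T-(\tilde U\Pi)^\dagger\tilde U)\tilde X^\dagger = \tilde X^\dagger - (\tilde U\Pi)^\dagger\tilde U\tilde X^\dagger$ is a combination of these two, so it also lies in $\operatorname{im}(W^\top)$.

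The only mildly delicate point is step (ii), specifically the range argument for $(\tilde U\Pi)^\dagger$. This hinges on writing $\Pi$ explicitly via $\tilde X^\dagger$. Everything else is bookkeeping with the identities $\tilde X\Pi=0$, $\tilde X\tilde X^\dagger=I_n$ and $\tilde U(\tilde U\Pi)^\dagger = I_m$.
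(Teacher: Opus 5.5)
Your proof is correct, and it proves a stronger statement than the paper's argument does. The paper's proof is essentially your step (i) alone: it says that comparing $\bar X F$ with \eqref{math:LS_id} is ``equivalent'' to $WF = I_{m+n}$, and then checks the four blocks with the same identities you use ($\tilde X\Pi = 0$, $\tilde X\tilde X^\dagger = I_n$, $\tilde U(\tilde U\Pi)^\dagger = I_m$). Strictly, $WF = I_{m+n}$ is not equivalent to $\bar X F = \bar X W^\dagger$; it is only sufficient, and only because in the paper's noise-free setting $\bar X = [B~A]W$ by \eqref{math:ex_sys_int}. Then $\bar X F = [B~A] = \bar X W^\dagger$ holds for \emph{every} right inverse $F$ of $W$. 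Since $W$ has a nontrivial kernel when $T > m+n$, the condition $WF = I_{m+n}$ does not by itself identify $F$ with $W^\dagger$. Your step (ii), the range condition $\operatorname{im}(F)\subseteq\operatorname{im}(W^\top)$, supplies exactly what is missing; it is equivalent to the fourth Penrose condition $(FW)^\top = FW$. It yields the pure matrix identity $F = W^\dagger$, so the coincidence with the least-squares estimates holds for arbitrary $\bar X$, for example data that satisfy \eqref{math:ex_sys_int} only approximately. That is the regime where ``least-squares estimate'' carries content beyond the true $(A,B)$, and it backs the implicit-identification point of Remark~\ref{remark:indirect} more convincingly. The paper's route is shorter, but in the exact-data case it relies on, the proposition is already essentially implied by the preceding lemma. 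Your route costs one extra line of range bookkeeping and gives a statement that does not depend on how $\bar X$ was generated.
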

\begin{proof}
Formulating $A$ and $B$ as  $\begin{bmatrix} B & A \end{bmatrix} = \bar{X} \begin{bmatrix} (\tilde U \Pi)^\dagger & ( I_T - (\tilde{U} \Pi)^\dagger \tilde{ U} ) \tilde{X}^\dagger\end{bmatrix}$ and comparing it with \eqref{math:LS_id}, yields the equivalent statement%
    \begin{align}
        \label{math:equiv} 
     \begin{bmatrix} \tilde{U} \\ \tilde{X} \end{bmatrix} \begin{bmatrix} (\tilde U \Pi)^\dagger & ( I_T - (\tilde{U} \Pi)^\dagger \tilde{ U} ) \tilde{X}^\dagger\end{bmatrix} =  I_{m+n}.
    \end{align} 
Substituting $(\tilde U \Pi)^\dagger = (\tilde U \Pi)^\top (\tilde U \Pi (\tilde U \Pi)^\top)^{-1}$, $\tilde{U}(\tilde{U}\Pi) = I$, and $\tilde{X} (\tilde U \Pi)^\dagger =0$ 
into \eqref{math:equiv} confirms the identity.
\end{proof}%

\begin{remark}
    \label{remark:indirect} 
 CARE \eqref{math:care_CL_simplified} enables data-driven Riccati flow and VI, and convergence in the noise-free case follows from Theorems~\ref{theo:ric_flow} and~\ref{theo:VI}, respectively. However, since the data-driven expressions of $A$ and $B$ coincide with their least-squares estimates \eqref{math:LS_id}, this approach implicitly performs system identification and is therefore not direct. Consequently, the explicit Riccati flow and VI formulations are omitted.
\end{remark}

Inspired by \cite{lopez2025databasedcontrolcontinuoustimelinear}, the following  CARE can be constructed.
\begin{proposition}
    \label{prop:CARE_closed_2} 
    Let $P\in\mathbb{S}^n_+,~  F(P)= \tilde{X}^\top P \bar{X} + \bar{X}^\top P \tilde{X} + \tilde{U}^\top R \tilde{U} + \tilde{X}^\top Q \tilde{X} $ and define $J \in\mathbb{S}^{n+m}$ as 
    \begin{align}
     J = \begin{bmatrix} J_{11} & J_{12} \\ J_{21} & J_{22}\end{bmatrix} = \begin{bmatrix} \tilde{X}^\top & \tilde{U}^\top \end{bmatrix}^\dagger F(P) \begin{bmatrix} \tilde{X} \\ \tilde{U} \end{bmatrix} ^\dagger. \label{math:J} 
    \end{align} 
    A valid data-driven CARE is 
    \begin{align}
    0 & = J_{11}- J_{12} R^{-1} J_{21}. \label{math:CARE_closed} 
    \end{align} 
\end{proposition}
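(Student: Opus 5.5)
The plan is to show that the data matrix $J$ in \eqref{math:J} coincides exactly with the model-based block matrix appearing in the LMI \eqref{math:LQR_boyd_care}. Concretely, I claim
\begin{align}
 J = \begin{bmatrix} A^\top P + P A + Q & PB \\ B^\top P & R \end{bmatrix}. \notag
\end{align}
Given this identity, $J_{22}=R$, and the Schur-complement expression $J_{11}-J_{12}R^{-1}J_{21}$ is literally $A^\top P + PA - PBR^{-1}B^\top P + Q$. Hence \eqref{math:CARE_closed} is the CARE \eqref{math:CARE}, and its unique positive (semi)definite solution is $P^*$ under Assumption~\ref{ass:1}.

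The first step factors $F(P)$ through the data. Substitute the integrated dynamics \eqref{math:ex_sys_int}, $\bar X = A\tilde X + B\tilde U$, into $F(P)$. Expanding gives
\begin{align}
 F(P) = \tilde X^\top (A^\top P + PA + Q)\tilde X + \tilde X^\top P B \tilde U + \tilde U^\top B^\top P \tilde X + \tilde U^\top R \tilde U. \notag
\end{align}
This is exactly
\begin{align}
 F(P) = \begin{bmatrix}\tilde X^\top & \tilde U^\top\end{bmatrix} \mathcal{M}\, \begin{bmatrix}\tilde X \\ \tilde U\end{bmatrix}, \notag
\end{align}
where $\mathcal{M}$ denotes the block matrix displayed above. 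This step is a routine computation.

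The second step removes the outer data factors using pseudoinverses. The matrix $W\coloneqq \left[\begin{smallmatrix}\tilde X\\ \tilde U\end{smallmatrix}\right]$ is a row permutation of $\left[\begin{smallmatrix}\tilde U\\ \tilde X\end{smallmatrix}\right]$. By Assumption~\ref{ass:2} it therefore has full row rank $n+m$, so $W^\dagger = W^\top(WW^\top)^{-1}$ and $WW^\dagger = I_{n+m}$. Moreover $\begin{bmatrix}\tilde X^\top & \tilde U^\top\end{bmatrix} = W^\top$ has full column rank, and $(W^\top)^\dagger = (W^\dagger)^\top$ with $(W^\top)^\dagger W^\top = I_{n+m}$. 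Therefore
\begin{align}
 J = (W^\top)^\dagger W^\top \mathcal{M} W W^\dagger = \mathcal{M}. \notag
\end{align}
It follows that $J$ is symmetric, which is consistent with the claim $J\in\mathbb{S}^{n+m}$. Reading off the blocks gives $J_{11}=A^\top P+PA+Q$, $J_{12}=PB=J_{21}^\top$, and $J_{22}=R$, which invertibly exposes $R$.

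The last step is to conclude with the Schur complement as described in the first paragraph. The only point needing care is the pseudoinverse bookkeeping in the second step: one must use the right inverse on the right factor and the left inverse on the left factor. One must also ensure the row ordering $(\tilde X,\tilde U)$ in \eqref{math:J} matches the block ordering of $\mathcal{M}$, so that $J_{11}$ is the $n\times n$ block. Beyond that, the argument is direct, and it shows that \eqref{math:CARE_closed} depends on $A$ and $B$ only through the data.
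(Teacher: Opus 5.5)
Your proposal is correct and follows essentially the same route as the paper. You substitute $\bar X = A\tilde X + B\tilde U$ to factor $F(P) = \begin{bmatrix}\tilde X^\top & \tilde U^\top\end{bmatrix}\tilde J \begin{bmatrix}\tilde X^\top & \tilde U^\top\end{bmatrix}^\top$, with $\tilde J$ the block matrix of \eqref{math:LQR_boyd_care}, use Assumption~\ref{ass:2} to cancel the data factors so that $J=\tilde J$, and read off the CARE as the Schur complement with $J_{22}=R$. Your explicit left/right-inverse bookkeeping makes the cancellation step clearer than the paper's write-up, which also displays this factorization with a spurious $0_{T,T}$ on the left-hand side.
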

\begin{proof}
Pre- and post-multiplying the Schur complement of the CARE \eqref{math:CARE} by $\left[\begin{smallmatrix}\tilde{X}^\top & \tilde{U}^\top\end{smallmatrix}\right] $ and its transpose yields 
\begin{align}
    0_{T,T}  =
{\begin{bmatrix} \tilde{X}^\top & \tilde U^\top \end{bmatrix}} \tilde{J}  \begin{bmatrix} \tilde{X}^\top & \tilde U^\top \end{bmatrix}^\top  \overset{ \eqref{math:ex_sys_int} }{=} F(P), \label{math:quad} 
\end{align}
where $\tilde{J} =  \left[\begin{smallmatrix} Q + PA + A^\top P & PB \\ B^\top P & R \end{smallmatrix}\right]$.
 By Assumption~\ref{ass:2}, $J$ is equal to $\tilde{J}$ and the CARE is recovered by \eqref{math:CARE_closed}.
\end{proof}

Equation \eqref{math:CARE_closed} enables a data-driven Riccati flow and VI as in Theorems~\ref{theo:ric_flow} and \ref{theo:VI}, respectively. For brevity, their explicit formulations are omitted, 
but the corresponding approaches are employed in the numerical results.

\begin{remark}
    \label{remark:recov} 
Once $P^*$ is obtained via Riccati flow or VI, the optimal feedback is recovered as $K^* = R^{-1} J_{21}^*$, where $J^*$ is the evaluation of \eqref{math:J} at $P^*$.
\end{remark}

\begin{remark}
    Although \eqref{math:CARE_closed} relies on the data matrices of the CL parameterization, it does not utilize its decision variable $G$ and the equality $I_n = \tilde{X} G$. Consequently, the formulation of Proposition~\ref{prop:CARE_closed_2} does not strictly conform to the definition of the CL parameterization. Moreover, it can be shown that substitution of the least-squares estimate \eqref{math:LS_id} into $\tilde{J}$ results in \eqref{math:J}, thus system identification is carried out implicitly. 
\end{remark}

\subsection{IRL Parameterization}
To ensure completeness, we state the Riccati flow and the VI under IRL parameterization from \cite{BIAN2016348}.
By using the solution to the linear equations \eqref{math:LSE2}, a CARE can be formulated as 
\begin{align}
    \label{math:CARE_IRL_1} 
 0 & = H - (K^+)^\top R K^+ + Q, 
\end{align} 
where $H$ and $K^+$ depend on $P$.
Thus, the Riccati flow is 
\begin{align}
    \label{math:Riccati_flow_IRL} 
 \dot P & = H - (K^+)^\top R K^+ + Q, \quad P(0)\in\mathbb{S}^n_+.
\end{align}

\begin{theorem}
    The trajectory $P(t)$ generated by \eqref{math:Riccati_flow_IRL} converges to $P^*$ for $t\to\infty$. Moreover, $K^+= K^*$ can be recovered by solving \eqref{math:LSE2} with $P^*$.
\end{theorem}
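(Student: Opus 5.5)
The plan is to show that, under Assumption~\ref{ass:rank_3}, the right-hand side of \eqref{math:Riccati_flow_IRL} equals the model-based Riccati right-hand side $A^\top P + PA - PBR^{-1}B^\top P + Q$ for every $P\in\mathbb{S}^n_+$. The flow \eqref{math:Riccati_flow_IRL} then coincides with \eqref{math:CARE_flow}, and convergence to $P^*$ follows from Theorem~\ref{theo:ric_flow}.

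\textbf{Step 1: identify the least-squares solution.} Fix any $P\in\mathbb{S}^n$ and set $H_0 = A^\top P + PA$ and $K^+_0 = R^{-1}B^\top P$. Integrating \eqref{math:V_dot} over each interval $[t_i,t_i+\delta]$ gives
\begin{align}
\operatorname{tr}(P r_{\Delta x}(t_i)) = \operatorname{tr}(H_0 r_{xx}(t_i)) + 2\operatorname{tr}(R K^+_0 r_{xu}(t_i)),
\end{align}
since the data satisfy \eqref{math:lin_sys}. Stacking these identities shows that $\tilde\theta_0 = [\operatorname{vech}(H_0)^\top~\operatorname{vec}(K_0^+)^\top]^\top$ solves $\tilde b = \tilde\Phi\tilde\theta$ exactly. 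Because $\tilde\Phi$ has full column rank (Assumption~\ref{ass:rank_3}), this system has at most one solution, so $\hat{\tilde\theta} = \tilde\Phi^\dagger\tilde b = \tilde\theta_0$. This is the same uniqueness argument as in Lemma~\ref{lemma:pol_imp}. Consequently $H(P) = A^\top P + PA$ and $K^+(P) = R^{-1}B^\top P$ along the flow.

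\textbf{Step 2: identify the vector fields.} Substituting Step~1 into \eqref{math:Riccati_flow_IRL} gives
\begin{align}
H - (K^+)^\top R K^+ + Q = A^\top P + PA - PBR^{-1}B^\top P + Q,
\end{align}
which is exactly the right-hand side of \eqref{math:CARE_flow}. The map $P\mapsto \tilde\Phi^\dagger\tilde b(P)$ is linear, so the data-driven vector field is polynomial in $P$, just like the model-based one. Both flows therefore start from the same $P(0)$ and have the same locally Lipschitz vector field, so by uniqueness of ODE solutions their trajectories coincide on the whole existence interval.

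\textbf{Step 3: convergence and gain recovery.} Theorem~\ref{theo:ric_flow} gives global existence of the model-based trajectory and $P(t)\to P^*$; by Step~2 the same holds for \eqref{math:Riccati_flow_IRL}. For the second claim, apply Step~1 at $P=P^*$: solving \eqref{math:LSE2} returns $K^+ = R^{-1}B^\top P^* = K^*$.

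\textbf{Main obstacle.} The main point to check is that Step~1 holds for every symmetric $P$, including $P$ that are not value matrices of a stabilizing policy. This is what lets the identification hold along the entire trajectory, not only at $P^*$. It is secured by the fact that the integral identity for $\frac{d}{dt}\bigl(x^\top P x\bigr)$ holds for arbitrary $P$, requiring no Lyapunov equation, together with the rank condition of Assumption~\ref{ass:rank_3}.
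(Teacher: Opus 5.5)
Your proof is correct and follows the same route as the paper: Assumption~\ref{ass:rank_3} makes \eqref{math:LSE2} return exactly $H=A^\top P+PA$ and $K^+=R^{-1}B^\top P$, so the vector field of \eqref{math:Riccati_flow_IRL} coincides with that of \eqref{math:CARE_flow}, Theorem~\ref{theo:ric_flow} gives convergence, and evaluating \eqref{math:LSE2} at $P^*$ recovers $K^*$. Your version is somewhat more careful than the paper's, which asserts the identification only for $P\succeq 0$. You justify it for every symmetric $P$ through the integrated form of \eqref{math:V_dot} and make the ODE-uniqueness step explicit, which removes any reliance on $P(t)$ staying positive semidefinite.
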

\begin{proof}
    By Assumption \ref{ass:rank_3}, $H = A^\top P + P A $ and $K^+=R^{-1}B^\top P$ are uniquely determined by \eqref{math:LSE2} for any $P\succeq 0$. Thus, \eqref{math:CARE_IRL_1} is equivalent to the model-based CARE and Theorem~\ref{theo:ric_flow} ensures the convergence of $P(t)$ to $P^*$ for $t\to\infty$. Consequently, $K^+ = K^*$ holds at convergence.
\end{proof}

\begin{remark}
    In contrast to the policy gradient flow  \eqref{math:grad}, the Riccati flow \eqref{math:Riccati_flow_IRL} may generate unstable feedbacks $K^+(t) $ for some $t\geq 0$. Hence, \eqref{math:Riccati_flow_IRL} is not, in general, suitable for direct online implementations.  
    An advantage, however, is that it requires no stabilizing initial policy.
\end{remark}

The VI under IRL parameterization is given in Algorithm~\ref{alg:VI_IRL}.\begin{algorithm}[t]
\caption{Value Iteration under IRL \cite[Algorithm 2]{BIAN2016348}}
\label{alg:VI_IRL}
\KwIn{$P_0\succeq 0$ }
\KwOut{Optimal value matrix $P^*$, optimal gain $K^*$}
$k=q=0$\\
\While{$P_k \ne P_{k+1}$}{
 Solve \eqref{math:LSE2} w.r.t. $(H_k, K^+_k)$
 $\tilde{P}_{k+1} = P_k + \varepsilon_k \! \left(H_k - (K^+)^\top R K^+ + Q   \right)$ \\
 \eIf{$\tilde{P}_{k+1}\notin B_q$}{
    $P_{k+1} = P_0$, $q = q+1$
 }{    $P_{k+1} = \tilde{P}_{k+1}$
 }
  $k=k+1$
}
\KwRet $P_k,K^+_k$
\end{algorithm}
\begin{theorem}[\texorpdfstring{\cite[Theorem 4.2]{BIAN2016348}}{}]
    Let $\{\varepsilon_k\}_{k=0}^\infty$ and $\{B_q\}_{k=0}^\infty$ satisfy \eqref{math:vareps} and \eqref{math:B_q}, respectively.
    The sequences $\{P_k\}_{k=0}^\infty$ and $\{K^+_k\}_{k=0}^\infty$ generated by Algorithm \ref{alg:VI_IRL} converge to $P^*$ and $K^*$, respectively.
\end{theorem}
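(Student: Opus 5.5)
The plan is to reduce Algorithm~\ref{alg:VI_IRL} to the model-based VI of Algorithm~\ref{alg:VI}, show that both produce identical iterates, and then invoke Theorem~\ref{theo:VI}.

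\textbf{Step 1: the linear solve recovers the model quantities.} Fix any $P_k\in\mathbb{S}^n$ generated by the algorithm. I would show that the unique solution $(H_k,K^+_k)$ of \eqref{math:LSE2} equals
\begin{equation*}
H_k=A^\top P_k+P_kA, \qquad K^+_k=R^{-1}B^\top P_k.
\end{equation*}
The true pair satisfies every integrated identity $\operatorname{tr}(P_k r_{\Delta x}(t_i))=\operatorname{tr}(H r_{xx}(t_i))+2\operatorname{tr}(RK^+r_{xu}(t_i))$. This holds because the derivation from \eqref{math:V_dot} is valid for an arbitrary symmetric $P$, not only Lyapunov solutions. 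Hence the true pair is a feasible solution of \eqref{math:LSE2}. Under Assumption~\ref{ass:rank_3}, $\tilde\Phi$ has full column rank, so $\tilde\Phi^\dagger\tilde b$ is the unique solution and coincides with the true pair. This mirrors the argument in Lemma~\ref{lemma:pol_imp}.

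\textbf{Step 2: the data-driven update equals the model-based one.} Substituting Step~1 into the update gives
\begin{equation*}
H_k-(K_k^+)^\top RK_k^+ + Q = A^\top P_k+P_kA-P_kBR^{-1}B^\top P_k+Q.
\end{equation*}
So $\tilde P_{k+1}$ is exactly the candidate of Algorithm~\ref{alg:VI}. The projection and reset logic involving $B_q$, $q$ and $P_0$ is identical in both algorithms. By induction on $k$, starting from the same $P_0\succeq0$ and the same $\{\varepsilon_k\}$ and $\{B_q\}$, the sequences $\{P_k\}$ from the two algorithms coincide.

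\textbf{Step 3: conclude convergence.} Theorem~\ref{theo:VI} then gives $P_k\to P^*$. Since $K^+_k=R^{-1}B^\top P_k$ is a continuous (linear) function of $P_k$, we get $K^+_k\to R^{-1}B^\top P^*=K^*$.

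\textbf{Main obstacle.} The delicate point is Step~1. The iterates $P_k$ need not be positive semidefinite in general, and I must confirm that the identity behind \eqref{math:LSE2} holds for every symmetric $P$ and that the rank of $\tilde\Phi$ does not depend on $P$. It does not, since $\tilde\Phi$ is built from data only and $P$ enters only through $\tilde b$. A secondary check is that \eqref{math:LSE2} uses $\operatorname{vec}(H)=D\operatorname{vech}(H)$ with symmetric $H$; this is consistent because $A^\top P+PA$ is symmetric whenever $P$ is.
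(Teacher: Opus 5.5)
Your proposal is correct and follows the same reduction the paper relies on. The paper gives no separate proof here and only cites \cite[Theorem 4.2]{BIAN2016348}, but it proves the companion IRL Riccati-flow theorem exactly this way: under Assumption~\ref{ass:rank_3}, \eqref{math:LSE2} uniquely returns $H_k=A^\top P_k+P_kA$ and $K_k^+=R^{-1}B^\top P_k$, so Algorithm~\ref{alg:VI_IRL} reproduces Algorithm~\ref{alg:VI} iterate by iterate, and Theorem~\ref{theo:VI} together with continuity of $P\mapsto R^{-1}B^\top P$ gives the claim. Your worry about indefinite iterates does not arise, since every $P_k$ lies in some $B_q\subseteq\mathbb{S}^n_+$ or equals $P_0\succeq 0$; in any case your Step~1 holds for every symmetric $P$.
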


To ensure convergence and boundedness of the iterates $P_k$ in Algorithm~\ref{alg:VI_IRL}, the update is reset to the initial iterate $P_0$ whenever it leaves the predefined compact set $B_q$ and the iteration continues with a reduced step size. The sequences $\{\varepsilon_k\}_{k=0}^\infty$ and $\{B_q \}_{k=0}^\infty$ are hyperparameters that influence the convergence rate and can be omitted if  $\{\varepsilon_k\}$ is chosen to guarantee $P_k \to P^*$. 

Comparing the VI (or Riccati flow) induced by the CL and IRL parameterizations with the corresponding CAREs in \eqref{math:CARE_closed} and \eqref{math:CARE_IRL_1} shows that the IRL parameterization requires solving the linear system \eqref{math:LSE2} at each iteration (time instant), whereas the CL parameterization requires only a one-time computation of
 $\bigl(\begin{bmatrix} \tilde{X}^\top & \tilde{U}^\top \end{bmatrix}^\top \bigr)^\dagger$ once, yielding lower computational cost. 

\section{Convex Optimization Problems}
\label{sec:conv_prob} 
This section presents convex optimization problems under CL and IRL parameterizations.
\subsection{Closed-loop Parameterization}
Similar to the data-driven discrete-time formulation in \cite{de2019formulas}, we present the optimization problem \eqref{math:LQR_feron} under the CL parameterization and transform it into a convex problem.

\begin{theorem}
    The optimal LQR feedback of the system \eqref{math:lin_sys} is $K^* = - \tilde{U} Z^* (Y^*)^{-1}$, where $Z^*\in\mathbb{R}^{T\times n}$ and $Y^*\in\mathbb{S}_+^n$ minimize the convex program%
    \begin{subequations}
        \label{math:opt_G2} 
    \begin{align}
 \min_{Y,Z,S}~ & \operatorname{tr}\left(Q Y  \right) + \operatorname{tr}\left(S  \right) \\
 \text{s.t.}~ &   \begin{bmatrix} S & R^{1/2} \tilde{U} Z \\
Z^\top \tilde{U}^\top R^{1/2} & Y \end{bmatrix} \succeq 0 \\
 & \bar{X} Z + Z^\top \bar{X}^\top + I \preceq 0 \\
 & Y = \tilde{X} Z,~ Y \succeq 0, 
\end{align} 
    \end{subequations}
with $S\in\mathbb{S}^m$. 
\end{theorem}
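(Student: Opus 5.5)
The plan is to reduce the convex program \eqref{math:opt_G2} to the model-based problem \eqref{math:LQR_feron}, with $x_0x_0^\top$ replaced by $I_n$ as in Remark~\ref{remark:I_n}. Two ingredients do the work: the CL parameterization of Theorem~\ref{theo:closed}, and the change of variables $Z = GY$. The argument then follows the well-known convexification of \cite{Feron} and its discrete-time data-driven counterpart in \cite{de2019formulas}.

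\emph{Step 1: rewrite \eqref{math:LQR_feron} in terms of $G$.} Fix a feasible pair $(K,Y)$ of \eqref{math:LQR_feron} with $Y\succ 0$. By Theorem~\ref{theo:closed} there is a $G$ with $-K=\tilde U G$, $I_n=\tilde X G$ and $A_K=\bar X G$. The stability LMI then becomes $\bar X G Y + Y G^\top \bar X^\top + I \preceq 0$, and the cost becomes $\operatorname{tr}(QY)+\operatorname{tr}(G^\top\tilde U^\top R\tilde U G Y)$.

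\emph{Step 2: substitute $Z=GY$.} The constraint $I_n=\tilde X G$ is then equivalent to $\tilde X Z = Y$. The Lyapunov inequality becomes $\bar X Z + Z^\top\bar X^\top + I\preceq 0$, which is linear in $Z$. The quadratic cost term is $\operatorname{tr}(R^{1/2}\tilde U Z Y^{-1} Z^\top \tilde U^\top R^{1/2})$. We introduce an epigraph variable $S\succeq R^{1/2}\tilde U Z Y^{-1}Z^\top\tilde U^\top R^{1/2}$ and apply a Schur complement, which is valid since $Y\succ0$. This gives the first LMI, and minimizing $\operatorname{tr}(S)$ recovers the cost. All constraints are LMIs or affine, so the program is convex.

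\emph{Step 3: show the two directions.} Forward: every feasible $(K,Y)$ yields a feasible $(Y,Z,S)$ with equal cost. Backward: from feasible $(Y,Z,S)$ we must recover $G=ZY^{-1}$ and $K=-\tilde U Z Y^{-1}$. For this we need $Y\succ0$. It follows because $I\preceq -(\bar XZ+Z^\top \bar X^\top)$, which forces $Y=\tilde XZ$ to be invertible. Indeed, if $Yv=0$ for some $v\neq0$, then $v^\top(\bar XZ+Z^\top\bar X^\top)v$ must be strictly negative. Writing $\bar X Z = A\tilde X Z + B\tilde U Z = AY + B\tilde UZ$, the term $v^\top AYv$ vanishes, leaving $2v^\top B\tilde U Z v$. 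That term is not obviously nonzero, so I would instead use the substitution to obtain $A_K Y + YA_K^\top + I\preceq0$ with $Y\succeq0$. Then $Y = \int e^{A_Kt}(\cdot)e^{A_K^\top t}\,\mathrm dt \succeq \int e^{A_Kt}e^{A_K^\top t}\,\mathrm dt\succ0$ once $A_K$ is shown to be Hurwitz.

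This is the main obstacle: Hurwitzness and $Y\succ0$ must be established without assuming invertibility of $Y$. I would first show $Y\succ0$ by contradiction. If $Y v=0$, then $v^\top(AY+YA^\top + B\tilde UZ + Z^\top\tilde U^\top B^\top +I)v\le 0$ reduces to $2v^\top B\tilde U Z v + \|v\|^2 \le 0$. Because $S$-feasibility forces $R^{1/2}\tilde UZ v=0$ (from the PSD block matrix with $Yv=0$), this gives $\|v\|^2\le0$, a contradiction. With $Y\succ0$ in hand, the Lyapunov inequality yields $A_K$ Hurwitz and $K\in\mathcal K$.

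\emph{Step 4: conclude optimality.} The optimal value coincides with that of \eqref{math:LQR_feron}. By Theorem~\ref{math:theo:dual} and the discussion following it, that value is attained by $K^*$, and the minimizer of the gain is unique. Hence $K^*=-\tilde U Z^*(Y^*)^{-1}$. A remaining subtlety is that \eqref{math:LQR_feron} uses strict inequalities while \eqref{math:opt_G2} uses non-strict ones; the Remark after Theorem~\ref{math:theo:dual} is invoked to handle this.
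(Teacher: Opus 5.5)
Your proposal is correct and follows the paper's proof: substitute the CL parameterization of Theorem~\ref{theo:closed} into \eqref{math:LQR_feron}, change variables via $Z=GY$, and apply a Schur complement to the quadratic cost term. Your extra argument that every feasible point already has $Y\succ 0$ is sound; it combines the kernel property of the PSD block matrix, which forces $\tilde U Z v=0$ whenever $Yv=0$, with the identity $\bar X Z = AY + B\tilde U Z$. This fills in a step the paper leaves implicit when it writes $G^*=Z^*(Y^*)^{-1}$.
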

\begin{proof}
    Substituting \eqref{math:sys_closed} into \eqref{math:LQR_feron} yields 
\begin{subequations}
    \begin{align}
 \min_{Y,G} ~ & \operatorname{tr}\left(Q Y  \right) + \operatorname{tr} (R^{1/2} \tilde{U} G Y G^\top \tilde{U}^\top R^{1/2})\\
 \text{s.t.}~ &  \bar{X} G Y + Y (\bar{X} G )^\top  + I \preceq 0 \\
 & I_n = \tilde{X} G, ~Y \succeq 0.
\end{align} 
\end{subequations}
By substituting $G Y = Z $ and applying a Schur complement, we obtain the convex program \eqref{math:opt_G2}, where $G^* = Z^* (Y^*)^{-1}$ and consequently $K^* = - \tilde{U} Z^* (Y^*)^{-1}$.
\end{proof}

Although it is possible to formulate problem \eqref{math:LQR_boyd} based on the data-driven CARE \eqref{math:care_CL_simplified}, such a formulation is omitted due to its indirect nature
(see Remark \ref{remark:indirect}). Instead, we formulate problem \eqref{math:LQR_boyd} without the minimal-norm constraint on $G$.

\begin{theorem}
    The optimal LQR feedback and the Riccati matrix of system \eqref{math:lin_sys} are $K^* = - \tilde{U} Z^* {S^*}^{-1}$ and $P^* = {S^*}^{-1}$, where $Z^*\in\mathbb{R}^{T\times n}$ and $S^*\in\mathbb{S}_+^n$ minimize the convex program%
    \begin{subequations}
        \label{math:conv_closed_2} 
         \begin{align}
     \min_{Z,S}~& -\operatorname{tr}\left(S \right) \\
     \text{s.t.}~ &  \begin{bmatrix}Z^\top \bar{X}^\top + Z \bar{X} & Z^\top  \tilde{U}^\top & S^\top Q^{1/2} \\
     \tilde{U} Z & - R^{-1} & 0_{m,n} \\
    Q^{1/2} S & 0_{n,m} & - I_n \end{bmatrix} \preceq 0 \label{math:c00}  \\
    &  N^\top ( \tilde{U}^\top R  \tilde{U} ) Z + N^\top \bar{X}^\top = 0 \label{math:c10}\\
    & S = \tilde{X} Z,~ S\succeq 0 \label{math:c30} 
    \end{align} 
    \end{subequations}
    with $N \in \mathbb{R}^{T \times (T-n)}$ satisfying $\operatorname{im}(N) = \ker(\tilde{X})$.
\end{theorem}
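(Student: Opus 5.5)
The plan is to obtain \eqref{math:conv_closed_2} from the model-based program \eqref{math:LQR_boyd} by a congruence transformation, and then to express the model quantities through the CL parameterization. First I will replace $\succ$ by $\succeq$ (Remark on SDPs) and use the Schur complement to write the LMI \eqref{math:LQR_boyd_care} as the Riccati inequality $A^\top P + PA - PBR^{-1}B^\top P + Q \succeq 0$. The maximizer is $P^*$, which satisfies the CARE with equality. I expect the sign convention of \eqref{math:c00} to correspond to the closed-loop form with $K=R^{-1}B^\top P$, namely $A_K^\top P + P A_K + Q + K^\top R K \preceq 0$. This inequality is the relaxed Lyapunov inequality, and it holds with equality at the optimum.

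Next I set $S=P^{-1}$ and multiply the inequality by $S$ on both sides, which gives
\begin{equation*}
S A_K^\top + A_K S + S Q S + S K^\top R K S \preceq 0 .
\end{equation*}
I then introduce $G$ through the CL parameterization of Theorem~\ref{theo:closed}, so that $A_K=\bar X G$, $-K=\tilde U G$ and $\tilde X G=I_n$, and I put $Z=GS$. This yields $A_K S=\bar X Z$, $KS=-\tilde U Z$ and $S=\tilde XZ$, the last being \eqref{math:c30}. The quadratic terms $SQS$ and $Z^\top\tilde U^\top R\tilde U Z$ are removed by two Schur complements using the blocks $-R^{-1}$ and $-I_n$, which produces the LMI \eqref{math:c00} (up to the transposition convention in the $(1,1)$ block).

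The objective needs separate treatment. Maximizing $\operatorname{tr}(P)$ is not the same as maximizing $\operatorname{tr}(P^{-1})$. The remedy is the monotonicity of Kleinman/Lyapunov solutions: every feasible $P$ satisfies $P\preceq P^*$ in the Löwner order, hence $S\succeq S^*$. I must therefore check that the intended extremal point is actually selected by the objective. Since $\min -\operatorname{tr}(S)$ maximizes $\operatorname{tr}(S)$, this appears to favour large $S$, that is, small $P$. I will instead argue the other way: the constraint \eqref{math:c10} pins $Z$ to the policy-improvement structure, so the feasible set becomes the set of $P$ satisfying the closed-loop Lyapunov inequality with $K=R^{-1}B^\top P$. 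I will then show that \eqref{math:c10} encodes this. Writing $G=ZS^{-1}$, the condition $N^\top(MG + (P\bar X)^\top)=0$ after right-multiplication by $S^{-1}$ is exactly the stationarity condition \eqref{math:stat2} projected onto $\ker\tilde X$. By Corollary~\ref{col:1} and Lemma~\ref{lemma:same_imp}, it holds if and only if $-\tilde U G = R^{-1}B^\top P$. I will verify the cleared form $N^\top M Z + N^\top \bar X^\top = 0$ by noting that $(P\bar X)^\top S = \bar X^\top$.

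With \eqref{math:c10} encoding $K=R^{-1}B^\top P$, the LMI \eqref{math:c00} is equivalent to the inequality $A^\top P + PA - PBR^{-1}B^\top P + Q \preceq 0$. The solutions of this inequality satisfy $P\succeq P^*$, a standard comparison result obtained from Theorem~\ref{theo:PI} and the Lyapunov integral representation \eqref{math:int_P}. Consequently $S\preceq S^*$, and maximizing $\operatorname{tr}(S)$ attains $S^*=(P^*)^{-1}$ uniquely, because a matrix that is $\preceq S^*$ and has equal trace must equal $S^*$. Recovery of $K^*$ then follows as $K^*=-\tilde U G^* = -\tilde U Z^*(S^*)^{-1}$.

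I expect the main obstacle to be this last comparison step together with the bookkeeping of signs and transposes in \eqref{math:c00}. Specifically, I need to establish $P\succeq P^*$ for every feasible $P$. My first approach is to subtract the CARE from the inequality, which gives a Lyapunov inequality for $P-P^*$ driven by the Hurwitz matrix $A_{K^*}$ plus a positive semidefinite term. From that, $P-P^*\succeq 0$ follows by the integral formula.
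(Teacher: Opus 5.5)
Your overall route is sound and more careful than the paper's. The paper starts from the CL-parameterized CARE equalities \eqref{math:Pol_eval}, \eqref{math:stat2}, $I_n=\tilde X G$ inside a nonconvex $\max\operatorname{tr}(P)$ problem. It applies the same substitution $S=P^{-1}$, $Z=GS$ and the same Schur complements, and then simply asserts that relaxing the Lyapunov equality to $\preceq 0$ does not alter the solution; the objective also switches from $\operatorname{tr}(P)$ to $\operatorname{tr}(S)$ without comment. Your reduction of \eqref{math:c10} to $\tilde U Z=-R^{-1}B^\top$, i.e.\ $K=R^{-1}B^\top P$, is correct. So is your reduction of \eqref{math:c00} to $\mathcal{R}(P)\coloneqq A^\top P+PA-PBR^{-1}B^\top P+Q\preceq 0$. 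The comparison $P\succeq P^*$ is exactly what justifies both the relaxation and the objective $\max\operatorname{tr}(S)$. Your opening via \eqref{math:LQR_boyd} is a detour, since that program has the opposite inequality, and you rightly abandon it.

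The gap is in the step you flag yourself. Subtracting the CARE and expanding around $K^*$ gives, with $\Delta=P-P^*$,
\[
A_{K^*}^\top\Delta+\Delta A_{K^*}=\mathcal{R}(P)+\Delta BR^{-1}B^\top\Delta .
\]
Here the positive semidefinite quadratic term has the wrong sign. The right-hand side is not sign-definite, so the integral formula yields nothing. This route cannot be patched, because it never uses $P\succ 0$, and the conclusion is false for merely symmetric $P$. For example, with $n=m=1$, $A=0$, $B=Q=R=1$, one has $P^*=1$, yet $P=-2$ satisfies $\mathcal{R}(P)=-3\le 0$.

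The missing idea is to expand around the closed loop of the candidate itself, $K=R^{-1}B^\top P$. This gives $A_K^\top\Delta+\Delta A_K=\mathcal{R}(P)-\Delta BR^{-1}B^\top\Delta\preceq 0$, but you must first show that $A_K$ is Hurwitz, and this is where $P\succ 0$ and Assumption~\ref{ass:1} enter. After congruence with $P$, \eqref{math:c00} reads $A_K^\top P+PA_K\preceq-(Q+K^\top RK)$. For an eigenpair $A_Kv=\lambda v$ this gives $2\operatorname{Re}(\lambda)\,\bar v^\top Pv\le-\|\sqrt{Q}v\|^2-\|R^{1/2}Kv\|^2$. Hence $\operatorname{Re}\lambda\ge 0$ forces $\operatorname{Re}\lambda=0$, $\sqrt{Q}v=0$ and $Kv=0$, so $Av=\lambda v$, which contradicts detectability of $(A,\sqrt{Q})$. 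Then
\[
\Delta=\int_0^\infty e^{A_K^\top t}\bigl(\Delta BR^{-1}B^\top\Delta-\mathcal{R}(P)\bigr)e^{A_Kt}\,\mathrm{d}t\succeq 0 .
\]
Equivalently, $P\succeq P_K\succeq P^*$ via Theorem~\ref{theo:PI}, which likewise needs $A_K$ Hurwitz first.

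Finally, writing $G=ZS^{-1}$ covers only $S\succ 0$, but the program admits singular $S$. For example, $S=0$ with $\tilde UZ=-R^{-1}B^\top$ is feasible. Handle such points by convexity: $(1-\lambda)S+\lambda S^*$ is feasible and positive definite for $\lambda\in(0,1)$, hence $\preceq S^*$, which gives $S\preceq S^*$.
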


\begin{proof}
The data-driven CARE without the minimal-norm constraint on $G$ (see Remark~\ref{remark:min_G}) is characterized by \eqref{math:Pol_eval}, \eqref{math:stat2}, and $I_n = \tilde{X}G$. 
These conditions are necessary and sufficient for optimality of the LQR problem, as they collectively enforce the HJB equation for $P\succeq 0$. 
Thus, the problem of computing $P^*$ can be formulated as the nonconvex program
\begin{align}
 \max_{P,G} \operatorname{tr}\left(P \right) \text{ s.t. } \eqref{math:Pol_eval},~ \eqref{math:stat2},~ I_n = \tilde{X}G \text{ and } P\succeq 0, \label{math:nonconvex} 
\end{align} 
which is subsequently relaxed to an SDP.
The condition \eqref{math:stat2} with $\Pi = I_T - \tilde{X}^\dagger\tilde X$ simplifies to $N^\top (\tilde{U}^\top R \tilde{U} ) G + N^\top \bar{X}^\top P =  0$. Pre- and post-multiplying  \eqref{math:Pol_eval} by $P^{-1}$ and substituting $Z= GP^{-1}$ and $P^{-1}=S$ yield
 \begin{align}
  Z^\top \bar{X}^\top +  \bar{X} Z + S^\top Q S + Z^\top \tilde{U}^\top R \tilde U Z = 0. \label{math:lyap_subst} 
 \end{align} 
 Applying these substitutions to \eqref{math:stat2} and $I_n = \tilde{X}G$ results in \eqref{math:c10} and \eqref{math:c30}, respectively. Since $Q\succeq 0$ and $\tilde{U}^\top  R \tilde{U}\succeq 0$, \eqref{math:lyap_subst} can be relaxed to a LMI without altering the solution of \eqref{math:nonconvex} and repeatedly applying the Schur complement then yields \eqref{math:c00}. 
 Finally, the resulting SDP is \eqref{math:conv_closed_2}.
\end{proof}

Theorem \ref{math:theo:dual} suggests that \eqref{math:opt_G2}  and \eqref{math:conv_closed_2} may be dual. 
Establishing this rigorously remains an open problem.
For completeness, we state the convex program from \cite{lopez2025databasedcontrolcontinuoustimelinear},  adapted to the integrated dynamics \eqref{math:sys_int} using the CARE of Proposition~\ref{prop:CARE_closed_2}.

\begin{theorem}[\texorpdfstring{\cite[Th. 17]{lopez2025databasedcontrolcontinuoustimelinear}}{}]
    The optimal Riccati matrix $P^*$ of the system \eqref{math:lin_sys}  is the solution of%
     \begin{subequations}
        \label{math:closed_conv_3} 
    \begin{align}
     \max_P ~& \operatorname{tr}\left( P \right) \\
     \text{s.t.}~ & F(P)\succeq 0,~ P\succeq 0,
    \end{align} 
\end{subequations}
    where $F(P)$ is defined as in Proposition~\ref{prop:CARE_closed_2}.
\end{theorem}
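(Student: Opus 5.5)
The plan is to reduce \eqref{math:closed_conv_3} to the model-based convex program \eqref{math:LQR_boyd} by showing that the data constraint $F(P)\succeq 0$ is equivalent to the LMI
\[
\tilde{J}(P)\coloneqq\begin{bmatrix} A^\top P+PA+Q & PB\\ B^\top P & R\end{bmatrix}\succeq 0 .
\]
Once the feasible sets coincide, maximizing $\operatorname{tr}(P)$ over them gives the same optimizer. From \cite[p.~115]{LMIbook}, as recalled after \eqref{math:LQR_boyd}, that optimizer is the stabilizing solution $P^*$ of the CARE \eqref{math:CARE}.

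\textbf{Step 1 (factorization).} Following the computation in the proof of Proposition~\ref{prop:CARE_closed_2}, substitute $\bar X = A\tilde X + B\tilde U$ from \eqref{math:ex_sys_int} into $F(P)$. This gives
\[
F(P)=W^\top \tilde{J}(P)\, W, \qquad W\coloneqq\begin{bmatrix}\tilde X\\ \tilde U\end{bmatrix}\in\mathbb{R}^{(n+m)\times T}.
\]

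\textbf{Step 2 (equivalence of the LMIs).} If $\tilde J(P)\succeq 0$, then $F(P)\succeq 0$ by congruence. Conversely, by Assumption~\ref{ass:2}, $W$ has full row rank. Hence for every $v\in\mathbb{R}^{n+m}$ there is a $w$ with $Ww=v$, and then $v^\top \tilde J(P) v = w^\top F(P) w\ge 0$. Equivalently, $\tilde J(P) = (W^\top)^\dagger F(P) W^\dagger$, which is exactly $J$ in \eqref{math:J} up to the block ordering. Therefore the feasible set $\{P\succeq 0: F(P)\succeq 0\}$ equals $\{P\succeq 0:\tilde J(P)\succeq 0\}$.

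\textbf{Step 3 (optimality).} Using $R\succ 0$, the Schur complement turns $\tilde J(P)\succeq 0$ into the Riccati inequality
\[
A^\top P+PA-PBR^{-1}B^\top P+Q\succeq 0 .
\]
This step is the main obstacle, because the paper states the result for strict inequalities and $P\succ 0$, whereas here all inequalities are non-strict. I would handle it with the standard comparison argument. Take any feasible $P$ and set $K^*=R^{-1}B^\top P^*$, so that $A_{K^*}$ is Hurwitz. Subtracting the CARE for $P^*$ from the inequality for $P$ gives
\[
A_{K^*}^\top (P-P^*)+(P-P^*)A_{K^*}\succeq (P-P^*)BR^{-1}B^\top(P-P^*)\succeq 0 .
\]
Since $A_{K^*}$ is Hurwitz, the Lyapunov integral representation then yields $P-P^*\preceq 0$.

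Consequently $\operatorname{tr}(P)\le\operatorname{tr}(P^*)$ for every feasible $P$. Moreover, $P^*$ is feasible: it is positive semidefinite, and it satisfies $F(P^*)\succeq 0$ with the Schur complement equal to zero. So $P^*$ attains the maximum. It is also the unique maximizer, because $P\preceq P^*$ together with $\operatorname{tr}(P)=\operatorname{tr}(P^*)$ forces $P=P^*$.
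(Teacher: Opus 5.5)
Your proof is correct. The paper gives no proof of its own here; the result is quoted from \cite{lopez2025databasedcontrolcontinuoustimelinear} and adapted to the integrated data via Proposition~\ref{prop:CARE_closed_2}. That adaptation amounts to your Steps~1--2: the factorization $F(P)=W^\top\tilde J(P)W$ and the recovery $J=\tilde J$ under Assumption~\ref{ass:2} make \eqref{math:closed_conv_3} the non-strict version of the model-based program \eqref{math:LQR_boyd}. The optimizer of that program would then be taken from \cite[p.~115]{LMIbook}, together with the remark after Theorem~\ref{math:theo:dual} that strict inequalities may be replaced by non-strict ones. Where you genuinely differ is Step~3, which replaces those citations by a direct comparison argument. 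It rests on the identity $A^\top P+PA-PBR^{-1}B^\top P+Q=A_{K^*}^\top\Delta+\Delta A_{K^*}-\Delta BR^{-1}B^\top\Delta$ with $\Delta=P-P^*$, combined with a Lyapunov integral for the Hurwitz matrix $A_{K^*}$ (Assumption~\ref{ass:1}). This buys you a self-contained proof that treats the non-strict inequalities explicitly; since $\tilde J(P^*)$ is singular, the maximum would not be attained with strict inequalities. It also gives uniqueness of the maximizer. In fact it shows the stronger statement that $P^*$ is the greatest element of the feasible set in the Loewner order. Consequently the constraint $P\succeq 0$ is redundant, and any objective $\operatorname{tr}(PX_0)$ with $X_0\succ 0$ would also return $P^*$. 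One minor point: no block reordering is needed in Step~2, because \eqref{math:J} is already built from $[\tilde X^\top~\tilde U^\top]$, so $J=(W^\top)^\dagger F(P)W^\dagger$ exactly.
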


The optimal feedback $K^*$ can be computed from $P^*$ as outlined in Remark~\ref{remark:recov}.

\subsection{IRL Parameterization}
We derive a set of equations under the IRL parameterization that admits a CARE interpretation and differs from \eqref{math:CARE_IRL_1}. This enables a convex optimization formulation analogous to  \eqref{math:LQR_boyd}.
For brevity, define for each interval $[t_i,t_i+\delta]$
\begin{align}
 \Gamma^{\Delta x}_i \coloneqq  r_{\Delta x}(t_i), ~
 \Gamma^{xx}_i\coloneqq r_{xx}(t_i) \text{ and }\Gamma^{xu}_i\coloneqq r_{xu}(t_i). 
\end{align} 
Substituting $K=R^{-1}B^\top P$ into \eqref{math:int_lyap} yields, for $i=1,\dots,T$, 
\begin{align}
 \operatorname{tr}\left(P \Gamma_i^{\Delta x} \right)
 & = - \operatorname{tr}(Q \Gamma_i^{xx}) + \operatorname{tr}\left(P B R^{-1} B^\top P \Gamma_i^{xx} \right) \notag \\
 & \quad ~ \! + 2 \operatorname{tr}(B^\top P  \Gamma_i^{xu}). \label{math:data_CARE} 
\end{align}
which can be viewed as a data-driven CARE, where $P$ and $PB$ are unknown, as the improved policy is enforced inside an equation that mirrors a policy evaluation (see Remark~\ref{remark_1}).
Next, let $W=PB$ and define  $f_i:\mathbb{S}^{n}\times \mathbb{R}^{n\times m} \to \mathbb{R}$ by 
\begin{align}
     (P,W) \mapsto  & f_i = 
 \operatorname{tr}\left(P \Gamma_i^{\Delta x} \right) + \operatorname{tr}(Q \Gamma_i^{xx}) \notag \\ 
 & \quad \! ~  - \operatorname{tr}(W  R^{-1} W^\top \Gamma_i^{xx}) 
  \!-\! 2 \operatorname{tr}\left( W^\top \Gamma_i^{xu} \right). \label{math:data_CARE_f} 
\end{align} 
Then $f_i(P,W)=0$ is equivalent to \eqref{math:data_CARE}.

\begin{lemma}
    \label{lemma:conv_f} 
    The function $f_i$ in \eqref{math:data_CARE_f} is concave in $(P,W)$.
\end{lemma}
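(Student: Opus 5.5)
Since $f_i$ is a sum of terms, and a sum of concave functions is concave, the plan is to split $f_i(P,W)$ into its affine part and its quadratic part in $W$ and treat them separately. The affine part is
\begin{align*}
(P,W)\mapsto \operatorname{tr}(P\Gamma_i^{\Delta x}) + \operatorname{tr}(Q\Gamma_i^{xx}) - 2\operatorname{tr}(W^\top \Gamma_i^{xu}).
\end{align*}
It is linear in $(P,W)$ plus a constant, hence both convex and concave on $\mathbb{S}^n\times\mathbb{R}^{n\times m}$. It therefore remains to show that $g(W)\coloneqq \operatorname{tr}(W R^{-1}W^\top \Gamma_i^{xx})$ is convex in $W$. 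Then $-g$ is concave, and it is trivially jointly concave in $(P,W)$ because it does not depend on $P$.

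For the convexity of $g$, the key observation is that $\Gamma_i^{xx}=r_{xx}(t_i)=\int_{t_i}^{t_i+\delta}x(\tau)x(\tau)^\top\mathrm{d}\tau$ is an integral of rank-one positive semidefinite matrices. Hence $\Gamma_i^{xx}\succeq 0$ and it admits a square root $(\Gamma_i^{xx})^{1/2}$. Moreover, $R\succ 0$, so $R^{-1}\succ 0$ and $R^{-1/2}$ exists. Using the cyclic property of the trace, we then rewrite
\begin{align*}
g(W)=\operatorname{tr}\bigl((\Gamma_i^{xx})^{1/2}WR^{-1/2}\,R^{-1/2}W^\top(\Gamma_i^{xx})^{1/2}\bigr)=\bigl\Vert (\Gamma_i^{xx})^{1/2}\,W\,R^{-1/2}\bigr\Vert_F^2 .
\end{align*}
This is the squared Frobenius norm of a linear map of $W$. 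A composition of a convex function (the squared norm) with a linear map is convex, so $g$ is convex.

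An alternative check, useful as a sanity test, is to vectorize: $g(W)=\operatorname{vec}(W)^\top\bigl(R^{-1}\otimes\Gamma_i^{xx}\bigr)\operatorname{vec}(W)$. The Kronecker product of two positive semidefinite matrices is positive semidefinite, so the Hessian of $g$ is $2(R^{-1}\otimes\Gamma_i^{xx})\succeq 0$.

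Combining the two parts, $f_i$ is an affine function minus a convex function, and is therefore concave in $(P,W)$. The only step requiring care is establishing $\Gamma_i^{xx}\succeq 0$. It follows directly from its definition as an integral of $xx^\top$, and does not require the rank assumptions (Assumption~\ref{ass:rank_2} or~\ref{ass:rank_3}). Note that concavity is not strict in general, since $\Gamma_i^{xx}$ may be singular. That is harmless here, because only concavity is claimed.
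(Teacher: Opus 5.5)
Your proof is correct and follows the paper's argument. Like the paper, you split off the affine and constant terms and rewrite the quadratic term as $\Vert (\Gamma_i^{xx})^{1/2} W R^{-1/2}\Vert_F^2$, a convex function of $W$, so $f_i$ is affine minus convex. Your explicit derivation of $\Gamma_i^{xx}\succeq 0$ from its definition as $\int x x^\top \mathrm{d}\tau$ (which the paper only asserts) and the Kronecker-product Hessian check $2(R^{-1}\otimes\Gamma_i^{xx})\succeq 0$ are valid supplements.
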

\begin{proof}
    The terms $\operatorname{tr}(P\Gamma_i^{\Delta x})$ and $-2\operatorname{tr}(W^\top\Gamma_i^{xu})$ are affine in $P$
and $W$, and $\operatorname{tr}(Q\Gamma_i^{xx})$ is constant. 
Moreover,
\begin{align}
\operatorname{tr}\!\left(W R^{-1} W^\top \Gamma_i^{xx}\right)
= \big\| (\Gamma_i^{xx})^{1/2} W R^{-1/2}\big\|_F^2 \geq 0,
\end{align}
hence $-\operatorname{tr}(W R^{-1} W^\top \Gamma_i^{xx})$ is the negative of a convex quadratic, since $\Gamma_i^{xx}\succeq 0$ and $R\succ 0$. Therefore $f_i$ is concave as a sum of concave terms.
\end{proof}

Lemma~\ref{lemma:conv_f} implies that the equalities $f_i(P,W)=0$ are generally nonconvex, while the superlevel sets 
$\{(P,W)\mid f_i(P,W)\ge 0\}$ are convex. This motivates relaxing $f_i(P,W)=0$ via an epigraph variable to obtain a convex program analogous to \eqref{math:LQR_boyd}.

\begin{theorem}
    The LQR gain of the system \eqref{math:lin_sys} is $K^* = R^{-1} {W^*}^\top$, where $W^*\in \mathbb{R}^{n\times m}$ minimizes the convex program%
\begin{subequations}
        \label{math:LQR_data_2} 
         \begin{align}
     \max_{P,W,Z} ~& \operatorname{tr}\left(P \right)\\
     \text{s.t.} ~& \operatorname{tr}\left(P \Gamma_i^{\Delta x} \right) + \operatorname{tr}\left(Q \Gamma_i^{xx} \right) - \operatorname{tr}\left(Z \Gamma_i^{xx} \right) \notag \\ 
     &  - 2 \operatorname{tr}\left(W^\top \Gamma_i^{xu} \right)= 0 \quad i=1,\dots,T  \label{math:scalar_c} \\
     & \begin{bmatrix} Z & W \\ W^\top & R \end{bmatrix}\succeq 0,~P\succeq 0 \label{math:shur_c} 
    \end{align} 
    \end{subequations}
    with $Z\in\mathbb{S}^n$.
\end{theorem}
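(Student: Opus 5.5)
The plan is to reduce \eqref{math:LQR_data_2} to the model-based program \eqref{math:LQR_boyd}: show that its feasible $P$ are exactly the solutions of the relaxed Riccati inequality, and that at the optimum the relaxation is tight with $W=P^*B$.

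\textbf{Step 1: rewrite the scalar constraints in model form.} The data satisfy \eqref{math:alt_1}. Taking the trace against $P$ gives
\[
\operatorname{tr}(P\Gamma_i^{\Delta x}) = \operatorname{tr}\bigl((A^\top P+PA)\Gamma_i^{xx}\bigr) + 2\operatorname{tr}(B^\top P\Gamma_i^{xu}).
\]
Substituting this into \eqref{math:scalar_c}, the $i$-th constraint becomes
\[
\operatorname{tr}\bigl((A^\top P + PA + Q - Z)\Gamma_i^{xx}\bigr) + 2\operatorname{tr}\bigl((PB - W)^\top \Gamma_i^{xu}\bigr)=0 .
\]
Stacking over $i=1,\dots,T$ and vectorizing gives a homogeneous linear system
\[
\begin{bmatrix}\Gamma^{xx}D & 2\Gamma^{ux}\end{bmatrix}\begin{bmatrix}\operatorname{vech}(A^\top P+PA+Q-Z)\\ \operatorname{vec}((PB-W)^\top)\end{bmatrix}=0,
\]
where the $\Gamma^{ux}$ block picks up a commutation matrix as needed. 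Its coefficient matrix is $\tilde\Phi$ up to the invertible factor $I_n\otimes R$. By Assumption~\ref{ass:rank_3} (equivalently Assumption~\ref{ass:rank_2}), it has full column rank. Since $Z$ and $A^\top P+PA+Q$ are symmetric, the constraints \eqref{math:scalar_c} are therefore equivalent to
\[
Z = A^\top P + PA + Q \quad\text{and}\quad W = PB .
\]

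\textbf{Step 2: reduce to \eqref{math:LQR_boyd}.} Insert these identities into \eqref{math:shur_c}. The LMI becomes
\[
\begin{bmatrix} A^\top P+PA+Q & PB\\ B^\top P & R\end{bmatrix}\succeq 0,
\]
which is the non-strict form of \eqref{math:LQR_boyd_care}. So \eqref{math:LQR_data_2} is \eqref{math:LQR_boyd}, with $\succ$ replaced by $\succeq$ as in the Remark following Theorem~\ref{math:theo:dual}, and with $Z$ and $W$ uniquely determined by $P$.

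Convexity is immediate: the objective is linear, \eqref{math:scalar_c} is affine in $(P,W,Z)$, and \eqref{math:shur_c} consists of LMIs. Equivalently, by the Schur complement $Z\succeq WR^{-1}W^\top$, which is the epigraph relaxation of the concave $f_i$ from Lemma~\ref{lemma:conv_f}.

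\textbf{Step 3: optimum and gain recovery.} By the result cited after \eqref{math:LQR_boyd} (\cite[p.~115]{LMIbook}), maximizing $\operatorname{tr}(P)$ over this set yields $P^*$, the stabilizing solution of \eqref{math:CARE}. Step~1 then gives $W^*=P^*B$, so
\[
R^{-1}{W^*}^\top = R^{-1}B^\top P^* = K^*.
\]
The statement's word ``minimizes'' should be read as ``solves'' the maximization.

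\textbf{Main obstacle.} The delicate point is Step~1: establishing that the stacked constraint matrix is exactly $\tilde\Phi$ up to invertible factors. This requires careful handling of the $\operatorname{vec}/\operatorname{vech}$ and commutation bookkeeping, together with the symmetry of $Z$, so that the rank assumption forces both residuals to vanish. A secondary subtlety is that the non-strict LMI form still attains $P^*$, specifically that the maximal-trace element of the closed relaxed Riccati set is the stabilizing CARE solution. For this I will appeal to the Remark after Theorem~\ref{math:theo:dual} and to the strong duality of Theorem~\ref{math:theo:dual}.
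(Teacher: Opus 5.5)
Your proof is correct, but it takes a different route from the paper's.

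\textbf{Your route.} You substitute the quadratic data identity \eqref{math:alt_1} into \eqref{math:scalar_c}. The full column rank of $\tilde\Phi$ (Assumption~\ref{ass:rank_3}) then forces $Z = A^\top P + PA + Q$ and $W = PB$ at \emph{every} feasible point. The data program therefore coincides with the non-strict version of \eqref{math:LQR_boyd}, and the Schur slack $Z - WR^{-1}W^\top$ is exactly the Riccati residual $A^\top P + PA + Q - PBR^{-1}B^\top P$.

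\textbf{The paper's route.} The paper first asserts that $f_i(P,W)=0$ encodes the CARE, so the nonconvex problem \eqref{math:nonconvex_IRL} attains $P^*$. It then shows that the epigraph relaxation is tight only at the optimum, via a KKT argument: Slater's condition, complementary slackness $\Lambda_{11}S^\star=0$, the summed identity \eqref{math:sum}, and a Cauchy--Schwarz contradiction giving $S^\star=0$.

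\textbf{What each buys.} Your reduction is shorter and avoids duality entirely. In particular, you never need to verify Slater's condition, and note that the block LMI is singular at the LQR solution itself. It also makes explicit that the program implicitly recovers $A^\top P+PA$ and $PB$, in the spirit of Remark~\ref{remark:indirect}. The paper's argument stays at the data level and certifies tightness through dual variables. However, it also needs the rank condition to conclude that $f_i=0$ encodes the HJB equation, so it gains no generality here.

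\textbf{Tightening Step 3.} You do not need strong duality there. To make the step self-contained and to show that $W^*$ is well defined, use the standard comparison argument. For any feasible $P$, set $K=R^{-1}B^\top P$ and $\mathcal{D}=P^*-P$. Then
\[
(A-BK^*)^\top\mathcal{D}+\mathcal{D}(A-BK^*) = -\bigl(A^\top P+PA+Q-PBR^{-1}B^\top P\bigr)-(K^*-K)^\top R(K^*-K)\preceq 0 .
\]
Since $A-BK^*$ is Hurwitz, this gives $P\preceq P^*$. Hence $P^*$ is the \emph{unique} maximizer of $\operatorname{tr}(P)$, and your Step~1 yields $W^*=P^*B$.
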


\begin{proof}
The equations $f_i(P,W)=0, i=1,\dots T$, enforce  the HJB equation. 
Thus, the nonconvex problem 
\begin{align}
 \max_{P,W}  \operatorname{tr}\left( P  \right)\text{ s.t. } f_i(P,W)=0,~ i=1,\dots,T,  ~ P\succ 0
 \label{math:nonconvex_IRL} 
\end{align} 
attains the Riccati matrix $P^*$ and the LQR feedback $K^* = R^{-1} (W^*)^\top$. Next, we show that the SDP \eqref{math:LQR_data_2}  is an exact relaxation of \eqref{math:nonconvex_IRL}.
Let $(P^\star, W^\star, Z^\star)$ be optimal for \eqref{math:LQR_data_2} and define the Schur-complement slack $S^\star \coloneq Z^\star - W^\star R^{-1} {W^\star}^\top \succeq 0$.
Substituting $S^\star$ into \eqref{math:scalar_c} and comparing with \eqref{math:data_CARE_f} yields
\begin{align}
 f_i(P^\star, W^\star) = \operatorname{tr }(S^\star \Gamma_i^{xx})\geq 0, \quad i=1,\dots, T. 
\end{align} 
The SDP \eqref{math:LQR_data_2} satisfies the Slater's condition, because the LQR solution is strictly feasible, so the KKT conditions are necessary and sufficient. 
Let $\lambda_i  \in \mathbb{R},~\Lambda = \left[ \begin{smallmatrix} \Lambda_{11} & \Lambda_{12}\\ \Lambda_{12}^\top & \Lambda_{22} \end{smallmatrix} \right] \succeq 0$, $\tilde{\Lambda}\succeq  0$ be dual variables. Stationarity yields $ I + \sum_i \lambda_i \Gamma_i^{\Delta x} + \tilde{\Lambda} =0, \Lambda_{11} = \sum_i \Gamma_i^{xx}$, and $ \Lambda_{12}=\sum_i \lambda_i (\Gamma_i^{xu})^\top$.
From the complementary slackness, we obtain $\Lambda_{11} S^\star =0$.
Multiplying \eqref{math:alt_1} by $\lambda_i$ and summing over $i$ results in 
\begin{align}
    \sum_i \lambda_i \Gamma_i^{\Delta x} = A \Lambda_{11} +\Lambda_{11} A^\top + B \Lambda_{12} + \Lambda_{12}^\top B^\top.  \label{math:sum} 
\end{align}
Combining \eqref{math:sum} with the stationary condition yields 
\begin{align}
     A \Lambda_{11} +\Lambda_{11} A^\top + B \Lambda_{12} + \Lambda_{12}^\top B^\top = - I - \tilde{\Lambda} \prec 0 \label{math:KTT} 
\end{align}
Suppose  $S^*\neq 0$. Then, there exists $z\neq 0$  such that $S^\star z = \sigma z$ with $\sigma > 0$ and, by $\Lambda_{11} S^\star =0$, $\Lambda_{11} z=0$. Pre- and post-multiplying \eqref{math:KTT} by $z$ and $z^\top$, respectively, yields 
\begin{align}
    z^\top (B \Lambda_{12} + \Lambda_{12}^\top B^\top ) z = - z^\top ( I + \tilde{\Lambda}) z <0 \label{math:KKT2} 
\end{align}
But from $\Lambda\geq 0$, the Cauchy-Schwarz inequality is 
$(z^\top \Lambda_{12} B^\top z )^2 \leq (z^\top \Lambda_{11} z)(z^\top B  \Lambda_{11} B^\top z) =0$, hence $z^\top \Lambda_{12} B^\top z =0$. Hence, the left-hand side of \eqref{math:KKT2} is zero, a contradiction. Thus, $S^\star =0$. Consequently, $f_i(P^\star, W^\star )=0$ for all $i$, so $(P^\star,W^\star)$ is optimal for \eqref{math:nonconvex_IRL}.
 Therefore, $P^\star = P^*$ and $W^\star = W^*$, yielding $K^* = R^{-1} (W^\star)^\top$.
\end{proof}

The parameterization \eqref{math:LSE2} also yields a convex program.

\begin{theorem}
    The optimal LQR gain of the system \eqref{math:lin_sys} is $K^* = {K^+}^*$, where ${K^+}^*$ is the solution of the convex program%
    \begin{subequations}
        \label{math:IRL_conv_2} 
            \begin{align}
 \max_{P,H,K^+} & \operatorname{tr}\left( P  \right)\\
 \text{s.t. } &  \begin{bmatrix} H + Q & (K^+) ^\top \\ K^+ & R^{-1}\end{bmatrix} \succeq 0 \label{math:CARE_Schur} \\
 & \eqref{math:LSE2},~ P\succ 0,
\end{align} 
    \end{subequations}
    with $H\in\mathbb{S}^n$.
\end{theorem}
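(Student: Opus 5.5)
The plan is to reduce problem \eqref{math:IRL_conv_2} to the model-based convex program \eqref{math:LQR_boyd}, using the exact parameter recovery guaranteed by Assumption~\ref{ass:rank_3}.

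First, fix any $P\in\mathbb{S}^n$. By construction, \eqref{math:LSE2} is satisfied by $H=A^\top P+PA$ and $K^+=R^{-1}B^\top P$, because every row of \eqref{math:LSE2} is the integrated identity derived from \eqref{math:V_dot}. Since $\tilde\Phi$ has full column rank, this is the \emph{unique} pair $(\operatorname{vech}(H),\operatorname{vec}(K^+))$ solving \eqref{math:LSE2}. This holds for every symmetric $P$, not just $P\succeq 0$, because the identity is linear in $P$. Hence the equality constraint acts as an affine map: it eliminates $H$ and $K^+$ as functions of $P$, and \eqref{math:IRL_conv_2} is equivalent to a problem in $P$ alone. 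Convexity is then immediate. The constraint \eqref{math:LSE2} is linear in $(P,H,K^+)$, \eqref{math:CARE_Schur} is an LMI in $(H,K^+)$, and the objective is linear.

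Second, substitute and rewrite the LMI. With $H=A^\top P+PA$ and $K^+=R^{-1}B^\top P$, the Schur complement with respect to the block $R^{-1}\succ 0$ shows that \eqref{math:CARE_Schur} is equivalent to
\[
A^\top P+PA+Q-PBR^{-1}B^\top P\succeq 0 .
\]
This is exactly the relaxed Riccati inequality obtained from \eqref{math:LQR_boyd_care} by its Schur complement with respect to $R$. It follows that \eqref{math:IRL_conv_2} is the SDP version of \eqref{math:LQR_boyd}, with $\succeq$ in place of $\succ$ in the LMI and $P\succ0$ kept. By the cited result \cite[p.~115]{LMIbook} and the remark that $\succ$ may be replaced by $\succeq$ without changing the optimum, the maximizer is the stabilizing CARE solution $P^*$.

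Third, recover the gain. At the optimum $P=P^*$, uniqueness of the solution of \eqref{math:LSE2} gives ${K^+}^*=R^{-1}B^\top P^*=K^*$.

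The main obstacle is the second step, specifically attainment and uniqueness of the maximizer. The difficulty is that the constraint $P\succ 0$ is open. I would argue as follows. Every feasible $P$ satisfies $P\preceq P^*$, the standard comparison result for the maximal solution of the Riccati inequality under Assumption~\ref{ass:1}. Because $P^*$ is itself feasible, the supremum of $\operatorname{tr}(P)$ is attained at $P^*$. It is unique, since any other feasible $P$ with $\operatorname{tr}(P)=\operatorname{tr}(P^*)$ would give $P^*-P\succeq 0$ with zero trace, hence $P=P^*$. If $Q$ is only semidefinite, $P^*$ is still positive definite under the detectability and stabilizability assumption, as used in Section~\ref{subsec:LQR}, so it satisfies $P\succ 0$.
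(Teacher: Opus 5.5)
Your proposal is correct and is essentially the paper's proof. Assumption~\ref{ass:rank_3} makes \eqref{math:LSE2} pin down $H=A^\top P+PA$ and $K^+=R^{-1}B^\top P$, the Schur complement turns \eqref{math:CARE_Schur} into the Riccati inequality behind \eqref{math:LQR_boyd_care}, and so the program reduces to \eqref{math:LQR_boyd}; your explicit comparison $P\preceq P^*$ spells out the maximality fact the paper takes from \cite{LMIbook}.

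One correction: detectability alone gives only $P^*\succeq 0$, not $P^*\succ 0$. For example, $A=-1$, $B=1$, $Q=0$, $R=1$ gives $P^*=0$, and the program with $P\succ 0$ is then infeasible. Feasibility of $P^*$ therefore needs something like observability of $(A,\sqrt{Q})$, a premise the paper also tacitly uses in Section~\ref{subsec:LQR}.
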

\begin{proof}
    For any $P\succeq 0$, the equality \eqref{math:LSE2} uniquely determines $K^+ = R^{-1}B^\top P$ and $H = A^\top P + PA$ under Assumption~\ref{ass:rank_3}. Since $R\succ 0$, \eqref{math:CARE_Schur} is equal to \eqref{math:LQR_boyd_care}, implying the equivalence of \eqref{math:IRL_conv_2} and \eqref{math:LQR_boyd}.
\end{proof}

Problems \eqref{math:LQR_data_2} and \eqref{math:IRL_conv_2} correspond, respectively, to the fundamental equations \eqref{math:LSE} and \eqref{math:LSE2} of the IRL parameterization (see Remark~\ref{remark:IRL}) and to the convex programs \eqref{math:LQR_feron} and~\eqref{math:LQR_boyd}.

\begin{table}
       \caption{Comparison of Convex Problems under CL and IRL parameterizations}\label{tab:comp}
\centering
   \setlength{\tabcolsep}{2pt}%
   \resizebox{\columnwidth}{!}{%
\begin{tabular}{cccccc}
\toprule
&  \multicolumn{3}{c}{CL Parameterization}  & \multicolumn{2}{c}{IRL Parameterization} \\
\cmidrule(lr){2-4} \cmidrule(lr){5-6} 
Feature & Problem\,\eqref{math:opt_G2} & Problem\,\eqref{math:conv_closed_2} & Problem\,\eqref{math:closed_conv_3} & Problem\,\eqref{math:LQR_data_2} & Problem\,\eqref{math:IRL_conv_2} \\
\midrule
\makecell{ \# scalar \\ variables }  & $n(n\!+\!1) \!+\! Tn$ & $\tfrac{n(n+1)}{2} \!+\! Tn$ & $\tfrac{n(n+1)}{2} $ & $n(n\!+\!1)\! + \!mn $ &  $n(n\!+\!1) \!+\! mn $\\
\makecell{ \# equality \\ constraints } & $n^2$ & $ \tfrac{n(1-n)}{2} \!+\!Tn$ & - & $T$ & $T$ \\
LMI size & $2n, n$ & $2n+m$ & $ T$ & $n, n+m$ &$n, n+m$   \\
\bottomrule
\end{tabular}}
\end{table}
Table~\ref{tab:comp} compares the problems \eqref{math:opt_G2}, \eqref{math:conv_closed_2}, \eqref{math:closed_conv_3}, \eqref{math:LQR_data_2}, and \eqref{math:IRL_conv_2} in terms of the number of  decision variables, equality constraints, and LMI dimensions.
Problem \eqref{math:opt_G2} involves relatively small LMIs but a larger number of scalar variables, while problem \eqref{math:conv_closed_2} reduces the number of variables at the expense of a larger LMI and additional equality constraints, which may increase computational burden for large datasets. Problem \eqref{math:closed_conv_3} has the fewest decision variables, but its LMI dimension scales with the sample size, limiting its scalability. In contrast, the IRL-based formulations \eqref{math:LQR_data_2} and \eqref{math:IRL_conv_2} maintain LMIs of fixed size independent of 
$T$, at the cost of equality constraints that grow linearly with the data. Since equality constraints typically incur a lower computational cost than large-scale LMIs, the IRL parameterization is expected to scale more favorably for large datasets. However, this advantage comes at the expense of increased data requirements, as the IRL parameterization generally requires more samples than the CL parameterization (see Subsection~\ref{subsec:comp}). 

\section{Policy Gradient Flows}
\label{sec:grad_flow} 
This section presents policy gradient flows under both parameterizations. While the formulation under CL parameterization is adapted to the continuous-time case from \cite{Zhao2023}, the IRL-based formulation yields novel results. 
\subsection{Closed-loop Parameterization}
In this subsection, we derive a closed-form expression for the gradient of the LQR cost under CL parameterization. Moreover, we show that the gradient projected onto the tangent space of $\mathcal{G}$ induces a projected gradient flow that achieves optimal LQR performance. 

\begin{proposition}
    Let $G\in\mathcal{G}$. Then, the gradient of the LQR cost \eqref{math:f_G} with respect to $G$  is
   \begin{align}
      \nabla f_G & = 2 \bigl(\tilde{U}^\top R \tilde U G + \bar{X}^\top P_G  \bigr) Y_G  \label{math:grad_G}
   \end{align}
   where $P_G$ and $Y_G\in\mathbb{S}^{n}_+$ satisfy \eqref{math:Pol_eval} and
   \begin{align}
      0 & = \bar{X} G Y_G + Y_G (\bar{X} G)^\top + I_n, \label{math:Y_G}
   \end{align}
   respectively.
\end{proposition}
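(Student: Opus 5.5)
We compute the directional derivative of $f_G = \operatorname{tr}(P_G)$ along an arbitrary perturbation $\Delta \in \mathbb{R}^{T\times n}$ and identify the gradient through the trace inner product $\langle \nabla f_G, \Delta\rangle = \operatorname{tr}(\nabla f_G^\top \Delta)$. The gradient is taken in the ambient space $\mathbb{R}^{T\times n}$. Since $\mathcal{G}$ is relatively open in the affine space $\mathcal{S}$ (Lemma~\ref{lem:set}), the Lyapunov equation \eqref{math:Pol_eval} is still uniquely solvable for $G+s\Delta$ with small $|s|$, even when $\tilde X\Delta \neq 0$: Hurwitzness of $\bar X G$ is an open condition. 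The map $G\mapsto P_G$ is rational (cf. Lemma~\ref{lemma:f_G_ana}), so it extends smoothly to a neighborhood, and directional derivatives are well defined. Restricting afterwards to the tangent directions $\tilde X\Delta=0$ recovers the intrinsic derivative on $\mathcal{G}$.

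\textbf{Step 1 (differentiate the Lyapunov equation).} Write $A_G \coloneqq \bar X G$ and $P' \coloneqq \mathrm{D}P_G[\Delta]$. Differentiating \eqref{math:Pol_eval} gives
\begin{align*}
A_G^\top P' + P' A_G + (\bar X\Delta)^\top P_G + P_G \bar X \Delta + \Delta^\top \tilde U^\top R \tilde U G + G^\top \tilde U^\top R \tilde U \Delta = 0 .
\end{align*}
Call the sum of the last four terms $E(\Delta)$; it is symmetric. Because $A_G$ is Hurwitz, $P'$ is the unique solution of $A_G^\top P' + P' A_G + E(\Delta)=0$.

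\textbf{Step 2 (adjoint trick).} We use $Y_G$ from \eqref{math:Y_G}, which exists, is unique, and is positive semidefinite because $A_G$ is Hurwitz. Then
\begin{align*}
\operatorname{tr}(P') = -\operatorname{tr}\bigl(P'(A_G Y_G + Y_G A_G^\top)\bigr) = -\operatorname{tr}\bigl((A_G^\top P' + P' A_G)Y_G\bigr) = \operatorname{tr}\bigl(E(\Delta) Y_G\bigr).
\end{align*}
The middle equality uses cyclicity of the trace. This is the only nonmechanical step, and it is the standard duality between the two Lyapunov equations, exactly as in the model-based gradient \eqref{math:grad_mod}.

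\textbf{Step 3 (collect terms).} Using cyclicity and symmetry of $P_G$, $Y_G$, and $R$, we have $\operatorname{tr}((\bar X\Delta)^\top P_G Y_G) = \operatorname{tr}(P_G\bar X\Delta Y_G)$, so both $P_G$ terms together contribute $2\operatorname{tr}(Y_G P_G \bar X \Delta)$. Similarly, both $\tilde U^\top R\tilde U$ terms contribute $2\operatorname{tr}(Y_G G^\top \tilde U^\top R \tilde U \Delta)$. Hence
\begin{align*}
\operatorname{tr}(P') = \operatorname{tr}\Bigl(\bigl[2(\tilde U^\top R\tilde U G + \bar X^\top P_G)Y_G\bigr]^\top \Delta\Bigr),
\end{align*}
which identifies $\nabla f_G$ as in \eqref{math:grad_G}.

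The main obstacle is bookkeeping rather than substance. One must correctly transpose each term to pull out a common $\Delta$ on the right, and one must state clearly that the gradient lives in the ambient space $\mathbb{R}^{T\times n}$, while the flow on $\mathcal{G}$ later uses its projection onto $\ker(\tilde X)$.
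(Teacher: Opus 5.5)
Your proof is correct and follows the same route as the paper. You differentiate the Lyapunov equation \eqref{math:Pol_eval} and pair the resulting perturbation equation with the dual Lyapunov solution $Y_G$ of \eqref{math:Y_G}, which gives $\operatorname{tr}(\mathrm{d}P_G) = 2\operatorname{tr}(\Theta Y_G)$ with $\Theta = \Delta^\top(\tilde U^\top R\tilde U G + \bar X^\top P_G)$.

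The only difference is cosmetic. You use the algebraic trace-duality identity, whereas the paper writes $\mathrm{d}P_G = \int_0^\infty e^{(\bar X G)^\top t}(\Theta+\Theta^\top)e^{\bar X G t}\,\mathrm{d}t$ and recognizes $\int_0^\infty e^{\bar X G t}e^{(\bar X G)^\top t}\,\mathrm{d}t = Y_G$.
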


\begin{proof}
   The matrix function \eqref{math:f_G} is a composition of $C^\omega$ maps, $G \mapsto P_G \mapsto \operatorname{tr }(P_G)$, see Lemma~\ref{lemma:f_G_ana}.
   The differential of \eqref{math:Pol_eval} with respect to $G$ is given by 
   \begin{align}
     0 & = (\bar{X} G )^\top dP_G + dP_G \bar{X} G + \Theta^\top + \Theta \label{math:ly4}, 
   \end{align}
   where $\Theta = dG^\top \bigl(\tilde{U}^\top R \tilde U G + \bar{X}^\top P_G  \bigr)$.
Since $\bar{X} G$ is Hurwitz  for every $G\in\mathcal{G}$, the solution of \eqref{math:ly4} is
$  dP_G = \int_{	0}^{\infty}  e^{ (\bar{X}G )^\top t}  ( \Theta +\Theta^\top  ) e^{ \bar{X} G  t} \mathrm{d}t.$
Thus, the differential of the LQR cost is given by 
\begin{align}
    d f_G  &= \operatorname{tr }(dP_G)  = \operatorname{tr}\left(\int_{	0}^{\infty}  e^{ (\bar{X}G )^\top t}  ( \Theta + \Theta^\top  ) e^{ \bar{X} G  t} \mathrm{d}t\right) \notag \\
    & = 2 \operatorname{tr}\bigl(\Theta \int_{	0}^{\infty}  e^{ \bar{X}G t}  e^{ (\bar{X} G)^\top  t} \mathrm{d}t  \bigr). \label{math:dFG}
\end{align}
The integral in \eqref{math:dFG} is the solution to \eqref{math:Y_G}, since $\bar{X} G $ is Hurwitz.
Thus, $df_G = 2 \operatorname{tr}(\Theta Y_G )$,
yielding \eqref{math:grad_G}.
\end{proof}

To maintain feasibility of the gradient flow, the projection $\Pi = I_T - \tilde{X}^\dagger \tilde{X}$ is used to premultiply the gradient $\nabla f_G$, thereby projecting it onto the tangent space of $\mathcal{G}$, i.e., $\operatorname{ker}(\tilde{X})$.
For $a>0$, define the non-compact sublevel set
$L_a \coloneq \{G  \in  \mathcal{G} \!\mid  f_G  \leq \! a\}$ of the function $f_G$.
We adapt the following projected gradient-dominance condition of $f_G$ on $\mathcal{G}$, mirroring \cite{Zhao2023}.
\begin{lemma}
   For $G\in L_a$, there exists $C(a)>0$ such that 
   \begin{align}
    \label{math:grad_dom} 
    f_G - f^* \leq C(a) \Vert \Pi \nabla f_G \Vert,
   \end{align} 
   where $f^* = \min_{G\in\mathcal{G}} f_G $.
\end{lemma}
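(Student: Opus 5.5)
The plan is to transfer the known gradient dominance of the model-based cost $f_K$ in \eqref{math:cost_K} to $f_G$ through the linear map $G\mapsto K=-\tilde U G$. The sublevel set $L_a$ is not compact (Lemma~\ref{Lemma:noncoercive}), so I will not argue on $L_a$ directly. Instead I will work with its image in $\mathcal{K}$, which is a sublevel set of the coercive function $f_K$ and hence compact.

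\textbf{Step 1 (relating the two gradients).} For $G\in\mathcal{G}$ with $K=-\tilde U G$, Theorem~\ref{theo:closed} gives $\bar X G=A_K$. Hence $P_G=P_K$ and $Y_G=Y_K$, so $f_G=f_K$ and $f^*=\min_{\mathcal G}f_G=\min_{\mathcal K}f_K$. I then substitute $\bar X^\top = \tilde X^\top A^\top + \tilde U^\top B^\top$ from \eqref{math:ex_sys_int} into \eqref{math:grad_G}. Using $\tilde U G=-K$, this gives
\begin{align*}
\nabla f_G = -2\tilde U^\top\bigl(RK-B^\top P_K\bigr)Y_K + 2\tilde X^\top A^\top P_K Y_K = -\tilde U^\top \nabla f_K + 2\tilde X^\top A^\top P_K Y_K ,
\end{align*}
where the first term uses \eqref{math:grad_mod}. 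Since $\Pi\tilde X^\top=(\tilde X\Pi)^\top=0$, it follows that $\Pi\nabla f_G=-\Pi\tilde U^\top\nabla f_K$.

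\textbf{Step 2 (lower bound on the projected gradient).} In the proof of Lemma~\ref{lemma:aux}, which is also used for Lemma~\ref{lemma:same_imp}, $\tilde U\Pi$ has full row rank under Assumption~\ref{ass:2}. Therefore $\Pi\tilde U^\top=(\tilde U\Pi)^\top$ has full column rank, and its smallest singular value $\sigma:=\sigma_{\min}(\tilde U\Pi)$ is positive. This yields $\Vert\Pi\nabla f_G\Vert\ge\sigma\Vert\nabla f_K\Vert$, with $\sigma$ depending only on the data.

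\textbf{Step 3 (model-based gradient dominance on a compact set).} If $G\in L_a$, then $K=-\tilde UG$ lies in $\mathcal K_a:=\{K\in\mathcal K\mid f_K\le a\}$. This set is compact because $f_K$ is coercive (Subsection~\ref{subsec:PO_LQR}, \cite{bu2020clqr}). The results of \cite{bu2020clqr} give gradient dominance on $\mathcal K_a$. If it is stated in the squared form $f_K-f^*\le c(a)\Vert\nabla f_K\Vert^2$, I convert it using continuity of $\nabla f_K$ on the compact set: with $M(a):=\max_{\mathcal K_a}\Vert\nabla f_K\Vert<\infty$, this gives $f_K-f^*\le c(a)M(a)\Vert\nabla f_K\Vert$. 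Combining this with Steps 1 and 2 gives \eqref{math:grad_dom} with $C(a)=c(a)M(a)/\sigma$. If the linear form is available directly, the factor $M(a)$ is not needed.

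The main obstacle is the non-compactness of $L_a$. A direct argument on $L_a$ fails, since continuity and compactness cannot bound the constants there. The reduction in Step 1 avoids this: $\Pi\nabla f_G$ depends on $G$ only through $K$, so every constant can be taken over the compact set $\mathcal K_a$. The remaining care point is the rank claim for $\tilde U\Pi$. It follows from Assumption~\ref{ass:2}: if $v^\top\tilde U\Pi=0$, then $\tilde U^\top v\in\operatorname{im}(\tilde X^\top)$, which contradicts full row rank of $\left[\begin{smallmatrix}\tilde U\\ \tilde X\end{smallmatrix}\right]$ unless $v=0$.
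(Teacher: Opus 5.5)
Your proposal is correct, but it takes a different route from the one the paper sketches. The paper omits the proof and refers to an adaptation of \cite[Lemma~4]{Zhao2023}. That route lifts the problem to the convex reparameterization \eqref{math:opt_G2} in $(Y,Z)$, uses first-order convexity there to bound $f_G-f^*$ by a directional derivative, and then maps the bound back to $\mathcal{G}$. You avoid the convex lifting entirely. Your key step is the identity $\Pi\nabla f_G=-(\tilde U\Pi)^\top\nabla f_K$ with $K=-\tilde U G$, which follows from \eqref{math:ex_sys_int} and $\Pi\tilde X^\top=0$. Combined with $\sigma_{\min}(\tilde U\Pi)>0$, this reduces the claim to the model-based gradient dominance of $f_K$ on the compact set $\mathcal{K}_a$. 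It also disposes of the non-compactness of $L_a$ cleanly, since every constant depends on $G$ only through $K$.

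Your route is shorter and gives an explicit data-dependent factor $1/\sigma_{\min}(\tilde U\Pi)$. It in fact proves more than the statement asks. If you keep the model-based bound in squared form, you get $f_G-f^*\le \frac{c(a)}{\sigma^2}\Vert\Pi\nabla f_G\Vert^2$ directly, so converting to the linear form through $M(a)$ only discards strength. The squared bound would also upgrade the $\mathcal{O}(1/t)$ estimate in the subsequent theorem to exponential decay of $f_{G(t)}-f^*$. The paper's route instead stays within data-defined objects and mirrors the discrete-time literature. It does not use \cite{bu2020clqr} as a black box, nor does it rest on the exact noise-free relation $\bar X=A\tilde X+B\tilde U$.

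Two small points. First, the statement requires $C(a)>0$, so in the degenerate case $M(a)=0$ you should take any positive constant. Second, the full row rank of $\tilde U\Pi$ comes from the proof of Lemma~\ref{lemma:aux}, not from Lemma~\ref{lemma:same_imp}. Your own closing argument already establishes it, so this does not affect the proof.
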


    For brevity, the proof is omitted. 
It follows by adapting the discrete-time proof of \cite[Lemma~4]{Zhao2023}, which builds on \cite[Theorem~1]{Fazel2021}. The main step is to use the convex reparameterization \eqref{math:opt_G2} of \eqref{math:data_op1} to obtain a directional-derivative bound from first-order convexity conditions, and then map this bound back to the original space $\mathcal{G}$.

\begin{theorem}
    The projected gradient flow 
    \begin{align}
        \label{math:proj_grad_1} 
     \dot G & = - \alpha \Pi \nabla f_G, \quad G(0)\in\mathcal{G}, \quad \alpha>0,
    \end{align} 
    converges to the optimal LQR value, i.e., $\lim_{t\to\infty} f_{G(t)}=f^*$.
\end{theorem}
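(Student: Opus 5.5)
The plan is a Lyapunov argument with $f_G$ as the Lyapunov function, combined with the projected gradient-dominance inequality \eqref{math:grad_dom}.

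\textbf{Step 1: feasibility and monotone decrease.}
Since $\tilde X \Pi = 0$, along \eqref{math:proj_grad_1} we have $\frac{d}{dt}(\tilde X G) = -\alpha \tilde X \Pi \nabla f_G = 0$. Hence $G(t)$ stays in the affine subspace $\mathcal{S}$ as long as the solution exists.

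By Lemma~\ref{lemma:f_G_ana}, the right-hand side is real analytic, and hence locally Lipschitz, on the relatively open set $\mathcal{G}$ (Lemma~\ref{lem:set}). So a unique maximal solution exists in $\mathcal{G}$ on some interval $[0,t_{\max})$. Along it,
\[
\tfrac{d}{dt} f_{G(t)} = \langle \nabla f_G, \dot G\rangle = -\alpha \langle \nabla f_G, \Pi \nabla f_G\rangle = -\alpha \Vert \Pi \nabla f_G\Vert_F^2 \le 0 .
\]
Here I use that $\Pi$ is a symmetric idempotent, so $\langle X,\Pi X\rangle = \Vert \Pi X\Vert_F^2$. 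Consequently $G(t)\in L_a$ with $a = f_{G(0)}$ for all $t<t_{\max}$.

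\textbf{Step 2: global existence ($t_{\max}=\infty$).} This is the main obstacle, because $L_a$ is not compact (Lemma~\ref{Lemma:noncoercive}). Two things must be ruled out.

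First, $G(t)$ approaching $\partial\mathcal{G}$ in finite time. This is excluded because $f_G\to\infty$ there (Lemma~\ref{Lemma:noncoercive}), while $f_{G(t)}\le a$.

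Second, finite-time escape $\Vert G(t)\Vert\to\infty$. For this I would bound the speed. Write $K=-\tilde U G$. On $L_a$ we have $P_G=P_K$ and $Y_G=Y_K$, where $K$ ranges over the compact sublevel set of $f_K$ (Remark~\ref{remark:same_P} and the coercivity of $f_K$). Hence $\Vert P_G\Vert$ and $\Vert Y_G\Vert$ are uniformly bounded on $L_a$.

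Moreover, $\tilde U G=-K$ is bounded, so $\Vert\nabla f_G\Vert = 2\Vert(\tilde U^\top R\tilde U G+\bar X^\top P_G)Y_G\Vert$ is bounded by a constant $c(a)$. Therefore $\Vert G(t)-G(0)\Vert\le \alpha c(a)t$, which gives no finite-time blow-up, and so $t_{\max}=\infty$.

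\textbf{Step 3: convergence of the cost.} Integrating Step 1 gives
\[
\alpha\int_0^\infty \Vert\Pi\nabla f_{G(t)}\Vert^2\,dt \le f_{G(0)}-f^* < \infty .
\]
Combining with \eqref{math:grad_dom}, I obtain the differential inequality
\[
\tfrac{d}{dt}(f_{G}-f^*) \le -\tfrac{\alpha}{C(a)^2}(f_G-f^*)^2 ,
\]
using that Frobenius and spectral norms are equivalent, up to adjusting $C(a)$.

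Comparison with the scalar ODE $\dot e=-\beta e^2$ yields
\[
0\le f_{G(t)}-f^* \le \frac{e_0}{1+\beta e_0 t}\to 0, \qquad e_0 = f_{G(0)}-f^*, \quad \beta=\alpha/C(a)^2 .
\]
Hence $\lim_{t\to\infty}f_{G(t)}=f^*$. This also shows $K(t)=-\tilde U G(t)\to K^*$, by uniqueness of $K^*$ and the compactness of the $K$-sublevel set, although the $G(t)$ themselves may drift along the null space.
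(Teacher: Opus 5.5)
Your proof is correct and follows the paper's argument. You differentiate $f_{G(t)}$ along the flow to get $-\alpha\Vert\Pi\nabla f_G\Vert^2$, use the projected gradient-dominance bound \eqref{math:grad_dom} on $L_{f_{G(0)}}$ to obtain $\tfrac{d}{dt}(f_G-f^*)\le -\tfrac{\alpha}{C^2}(f_G-f^*)^2$, and integrate this to the $O(1/t)$ bound. Your Steps 1--2 add well-posedness details the paper leaves implicit: invariance of $\mathcal{S}$, monotone decrease keeping $G(t)$ in $L_a$, and global existence via boundary blow-up of $f_G$ plus a speed bound from the compact $K$-sublevel set. Your differential inequality also has the correct minus sign, which the paper's display omits.
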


\begin{proof}
    The derivative of $f_G-f^*$ along $G(t)$ is 
    \begin{align}
     \frac{d }{d t} (f_G - f^*) &= \operatorname{tr}\left( (\nabla f_G)^\top \dot G \right) = - \alpha \Vert \Pi \nabla f_G \Vert^2   \\ & \! \! \overset{\eqref{math:grad_dom}}{\leq} \frac{\alpha }{C(f_{G(0)})^2} (f_G - f^*)^2 \label{math:diff_in} 
    \end{align} 
        The differential inequality \eqref{math:diff_in} implies
        \begin{align}
         f_{G(t)} - f^* \leq \frac{f_{G(0)}-f^*}{1+\tfrac{\alpha (f_{G(0)}-f^*)}{C(f_{G(0)})^2} t} \to 0 \text{ as } t\to\infty. 
        \end{align} 
\end{proof}

Note that the equilibria of \eqref{math:proj_grad_1} over $\mathcal{G}$ coincide with $\mathcal{G}^*$. Since $Y_G\succ 0$ for all $G\in\mathcal{G}$, the steady-state condition $ \Pi (\tilde{U}^\top R \tilde{U} G + \bar{X}^\top P_G ) = 0$ must hold, which is equivalent to the optimality condition \eqref{math:stat2} associated with the HJB equation. 
This condition is satisfied only for the optimal value function $f^*$ with $ G^*\in\mathcal{G}^*$, confirming that the equilibria form $\mathcal{G}^*$.

\begin{remark}
    Since the sublevel sets $L_a$ of $f_G$ are not compact, standard Lyapunov arguments do not suffice to establish boundedness of $G(t)$ generated by \eqref{math:proj_grad_1}, and consequently, convergence of $G(t)$ to a limit point $G^*\in\mathcal{G}^*$ cannot be established without additional mechanisms. 
\end{remark}

\begin{remark}
    To ensure bounded trajectories $G(t)$, a regularizer can be added to the objective function, i.e.,
     \begin{align}
     f_G^{\lambda} = f_G +   \lambda\Vert \tilde{\Pi} G \Vert, \quad\lambda>0, 
    \end{align} 
    where $\tilde{\Pi}\coloneq I_T - \left[\begin{smallmatrix}\tilde{U} \\ \tilde{X}\end{smallmatrix}\right]^\dagger \left[\begin{smallmatrix}\tilde{U} \\ \tilde{X}\end{smallmatrix}\right]$.
    This regularizer enforces the orthogonality constraint $\tilde{\Pi}G = 0$, thereby selecting the unique minimum-norm solution.
    Such regularization has been studied in the discrete-time case, where it promotes certainty equivalence and robustness to process noise \cite[Sec.~III.A]{dorfler2022}. 
      Moreover, for projected gradient descent in discrete-time LQR, implicit regularization occurs when $G(0)$ satisfies $\tilde{\Pi} G(0) = 0$ \cite[Theorem 2]{Zhao2023}. An analogous property holds for the continuous-time setting when initialized with 
     $G(0) = G^p$, where $G^p$ \eqref{math:gen_sol} is evaluated at $K(0)=K_0$, yielding $\lim\limits_{t\to\infty } G(t) = G^* \in \mathcal{G}^* $ with $ \tilde{\Pi} G^* = 0$.     
\end{remark}

\subsection{IRL Parameterization}
In this subsection, we derive the policy gradient flow under IRL parameterization. Before stating the gradient expression, we first establish the following auxiliary lemma.

\begin{lemma}
    \label{math:lemma_L} 
    Let $\Psi = \begin{bmatrix}I_{n^2} & 0_{n^2, nm}\end{bmatrix}$ and $L\in\mathbb{R}^{n\times n}$ be defined by $\operatorname{vec}\left(L  \right) \coloneq \bigl( \Psi \bar{\Phi}^\dagger \Gamma^{xx} \bigr)^\top \operatorname{vec}\left(I_n \right)$, where $\bar{\Phi}$ is as in \eqref{math:LSE_0}. Then, the matrix $L$ is symmetric.
\end{lemma}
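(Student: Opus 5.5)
The plan is to express the symmetry of $L$ as invariance of $\operatorname{vec}(L)$ under the commutation matrix, and then push that invariance onto the data matrix $\Gamma^{xx}$, whose rows are vectorized symmetric matrices. Let $C_{n,n}\in\mathbb{R}^{n^2\times n^2}$ denote the commutation matrix, so that $\operatorname{vec}(M^\top)=C_{n,n}\operatorname{vec}(M)$ for every $M\in\mathbb{R}^{n\times n}$. Recall that $C_{n,n}$ is a symmetric permutation matrix, i.e., $C_{n,n}=C_{n,n}^\top=C_{n,n}^{-1}$. Then $L$ is symmetric if and only if
\begin{align}
C_{n,n}\operatorname{vec}(L)=\operatorname{vec}(L).
\end{align}
It is convenient to abbreviate $\mathcal{M}\coloneqq \Psi\bar{\Phi}^\dagger\Gamma^{xx}\in\mathbb{R}^{n^2\times n^2}$, so that $\operatorname{vec}(L)=\mathcal{M}^\top\operatorname{vec}(I_n)$.

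The key observation concerns the rows of $\Gamma^{xx}$. By definition, the $i$-th row of $\Gamma^{xx}$ is $\operatorname{vec}(r_{xx}(t_i))^\top$, where $r_{xx}(t_i)=\int_{t_i}^{t_i+\delta}xx^\top\mathrm{d}\tau$ is a symmetric matrix. Hence each row satisfies
\begin{align}
\operatorname{vec}(r_{xx}(t_i))^\top C_{n,n}=\bigl(C_{n,n}\operatorname{vec}(r_{xx}(t_i))\bigr)^\top=\operatorname{vec}(r_{xx}(t_i)^\top)^\top=\operatorname{vec}(r_{xx}(t_i))^\top,
\end{align}
using the symmetry of $C_{n,n}$. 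This gives $\Gamma^{xx}C_{n,n}=\Gamma^{xx}$. Right-multiplying $\mathcal{M}$ by $C_{n,n}$ only acts on the factor $\Gamma^{xx}$, so $\mathcal{M}C_{n,n}=\mathcal{M}$ follows regardless of what $\Psi\bar{\Phi}^\dagger$ is.

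With this in hand, the conclusion is a one-line computation:
\begin{align}
C_{n,n}\operatorname{vec}(L)=C_{n,n}^\top\mathcal{M}^\top\operatorname{vec}(I_n)=(\mathcal{M}C_{n,n})^\top\operatorname{vec}(I_n)=\mathcal{M}^\top\operatorname{vec}(I_n)=\operatorname{vec}(L),
\end{align}
which proves that $L=L^\top$.

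Notably, this route needs neither the rank properties of $\bar{\Phi}$ (which in fact lacks full column rank, since $\Gamma^{\Delta x}$ annihilates vectorized skew-symmetric matrices) nor any explicit description of its pseudoinverse. The step I expect to require care is the bookkeeping around the transpose: the symmetry of $L$ must be tied to the column structure of $\mathcal{M}$ (right multiplication by $C_{n,n}$) and not to its row structure. As a fallback, one could argue via the minimum-norm property of $\bar{\Phi}^\dagger$, showing that $\Psi\bar{\Phi}^\dagger$ has range in vectorized symmetric matrices. However, that property bears on the columns of $\mathcal{M}$, and via $\operatorname{vec}(I_n)$ it only controls traces, so I would rely on the commutation argument above.
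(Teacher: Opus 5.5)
Your proof is correct and follows essentially the paper's argument: both reduce the symmetry of $L$ to $C_{n,n}\operatorname{vec}(L)=\operatorname{vec}(L)$, use the symmetry of each $r_{xx}(t_i)$ to get $\Gamma^{xx}C_{n,n}=\Gamma^{xx}$ (the paper also states the transposed form $C_{n,n}(\Gamma^{xx})^\top=(\Gamma^{xx})^\top$), and finish with the same one-line computation, which needs no information about $\bar{\Phi}^\dagger$. Your side remark that $\bar{\Phi}$ is column-rank deficient for $n\ge 2$ is accurate, but it plays no role in either proof.
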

\begin{proof}
    It is sufficient to prove $\operatorname{vec}\left(L^\top  \right) = C_{n,n} \operatorname{vec}\left(L \right) = \operatorname{vec}\left( L  \right)$, where $C_{n,n}\in\mathbb{R}^{n^2 \times n^2}$ is a commutation matrix. Since $r_{xx}(t)=r_{xx}^\top(t)$ by definition, it follows $C_{n,n} \operatorname{vec}\left( r_{x x}(t)  \right) = \operatorname{vec}\left(r_{x x}(t) \right)$,
    implying $ \Gamma^{xx} C_{n,n}= \Gamma^{xx}$, and $C_{n,n} (\Gamma^{xx})^\top = (\Gamma^{xx})^\top$. Then, 
    \begin{align}
     \operatorname{vec}\left( L^\top  \right) &= C_{n,n} \operatorname{vec}\left( L  \right) = C_{n,n} (\Gamma^{xx})^\top (\Psi \bar{\Phi}^\dagger)^\top \operatorname{vec}\left(I_n  \right) \notag  \\
     & =  (\Gamma^{xx})^\top (\Psi \bar{\Phi}^\dagger)^\top \operatorname{vec}\left(I_n  \right) = \operatorname{vec}\left(L \right), 
    \end{align} 
    completing the proof.
\end{proof}

\begin{proposition}
    Let $K\in\mathcal{K}$. Then, the gradient of the LQR cost \eqref{math:f_IRL} with respect to $K$ is 
    \begin{align}
     \nabla_K \hat{f}_K = 2 \bigl( (B^\top P_K) - RK  \bigr) L , \label{math:grad_IRL} 
    \end{align} 
    where $B^\top P_K $ is the solution to \eqref{math:LSE} and $L$ is as in Lemma~\ref{math:lemma_L}.
\end{proposition}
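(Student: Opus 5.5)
The plan is to differentiate the data-driven policy evaluation \eqref{math:LSE_0} implicitly with respect to $K$, instead of differentiating the pseudoinverse formula \eqref{math:theta} directly. Fix $K\in\mathcal{K}$ and write $\bar\theta_K=[\operatorname{vec}(\hat P_K)^\top~\operatorname{vec}(B^\top \hat P_K)^\top]^\top$. By Lemma~\ref{lemma:pol_imp} and Remark~\ref{remark:same_P}, $\hat P_K$ coincides with the Lyapunov solution $P_K$. The map $K\mapsto P_K$ is real analytic on the open set $\mathcal{K}$, so $K\mapsto\bar\theta_K$ is differentiable. The identity $b(K)=\bar\Phi(K)\bar\theta_K$ holds for every $K\in\mathcal{K}$, so it can be differentiated in a direction $dK$.

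\textbf{Step 1 (differentiate the identity).} The identity gives $db=d\bar\Phi\,\bar\theta_K+\bar\Phi\,d\bar\theta$. From the definition of $b$, $db=-\Gamma^{xx}\operatorname{vec}(dK^\top RK+K^\top R\,dK)$. Only the $E(K)$ block of $\bar\Phi$ depends on $K$, so $d\bar\Phi=[0~~-2\Gamma^{xx}(I_n\otimes dK^\top)]$. Using $(I_n\otimes dK^\top)\operatorname{vec}(B^\top P_K)=\operatorname{vec}(dK^\top B^\top P_K)$ and the symmetry of $r_{xx}$ (so that $\Gamma^{xx}\operatorname{vec}(M)=\Gamma^{xx}\operatorname{vec}(M^\top)$), we obtain
\begin{align*}
\bar\Phi\, d\bar\theta = \Gamma^{xx}\operatorname{vec}(\Theta+\Theta^\top),\qquad \Theta\coloneqq dK^\top\bigl(B^\top P_K-RK\bigr).
\end{align*}

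\textbf{Step 2 (solve for $dP$).} Next I would show that $\operatorname{vec}(dP_K)=\Psi\bar\Phi^\dagger\Gamma^{xx}\operatorname{vec}(\Theta+\Theta^\top)$. This is the step I expect to be the main obstacle, because $\bar\Phi$ is generally not of full column rank: columns of $\Gamma^{\Delta x}$ are duplicated, since $\Gamma^{\Delta x}C_{n,n}=\Gamma^{\Delta x}$. Hence $\bar\Phi^\dagger$ returns only the minimum-norm solution. The argument I would use is as follows. Under Assumption~\ref{ass:rank_2}, $\Phi$ has full column rank. Therefore $\ker\bar\Phi$ consists exactly of vectors $[\operatorname{vec}(S)^\top~0]^\top$ with $S$ skew-symmetric, i.e.\ $D^\top$-annihilated directions. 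The minimum-norm solution is orthogonal to this kernel, so its first block is the vectorization of a symmetric matrix. The true derivative $d\bar\theta$ has symmetric first block $\operatorname{vec}(dP_K)$ and solves the same consistent equation. Two solutions differ by a kernel element, i.e.\ by a skew-symmetric first block, and both first blocks are symmetric, so the first blocks coincide. This gives the claimed formula after applying $\Psi$. If this route became messy, a fallback is to work with $\Phi$ and $D$ directly and symmetrize.

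\textbf{Step 3 (assemble the gradient).} The differential of the cost is
\begin{align*}
d\hat f_K=\operatorname{vec}(I_n)^\top\operatorname{vec}(dP_K)=\operatorname{vec}(L)^\top\operatorname{vec}(\Theta+\Theta^\top)=\operatorname{tr}\bigl(L(\Theta+\Theta^\top)\bigr),
\end{align*}
where the middle equality uses the definition of $L$ in Lemma~\ref{math:lemma_L}. By the symmetry of $L$ from Lemma~\ref{math:lemma_L}, this equals $2\operatorname{tr}(L\Theta)=2\operatorname{tr}\bigl(dK^\top(B^\top P_K-RK)L\bigr)$. Reading off the Frobenius inner product $\langle dK,\cdot\rangle$ yields \eqref{math:grad_IRL}.

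As a consistency check, comparing with \eqref{math:grad_mod} via Remark~\ref{remark:same_P} should reveal $L=-Y_K$, which confirms the sign convention.
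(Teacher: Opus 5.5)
Your proposal is correct and follows essentially the same route as the paper: you implicitly differentiate \eqref{math:LSE_0}, express $\operatorname{vec}(dP)$ through $\Psi\bar{\Phi}^\dagger\Gamma^{xx}$ so that $\operatorname{tr}(dP)$ becomes an inner product with $L$, and use the symmetry of $L$ from Lemma~\ref{math:lemma_L} to read off the gradient. Your Step~2 also adds something the paper's proof uses without comment: $\bar{\Phi}$ is rank deficient, and you show that the minimum-norm solution given by $\bar{\Phi}^\dagger$ still recovers $dP_K$, because the kernel of $\bar{\Phi}$ consists only of vectors whose first block is skew-symmetric and whose second block is zero.
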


\begin{proof}
    The differential of \eqref{math:LSE_0} with respect to $K$ is 
    \begin{align}
     &\bar{\Phi} \begin{bmatrix} \operatorname{vec}\left(d P  \right)\\ \operatorname{vec}\left( d (B^\top P ) \right)\end{bmatrix}  \\ & ~~ = \Gamma^{xx} \operatorname{vec}\left( - dK^\top R K - K^\top R dK + 2  dK^\top (B^\top P) \right). \notag
    \end{align}
    Consequently, the differential of $\hat{f}_K$ \eqref{math:f_IRL} is
    \begin{align}
     d\hat{f}_K &= \operatorname{tr}\left(dP  \right) = \operatorname{vec}\left(I_n  \right)^\top \operatorname{vec}\left( dP  \right) \\
     & = \operatorname{tr}\left(L^\top \left(- dK^\top R K - K^\top R dK +  2  dK^\top (B^\top P)  \right) \right)  \notag \\
     & = \operatorname{tr}\left( dK^\top (- RK L^\top - RK L + 2 (B^\top P) L^\top   ) \right), \notag
    \end{align} 
    implying \eqref{math:grad_IRL}, since $L$ is symmetric by Lemma~\ref{math:lemma_L}.
\end{proof}

\begin{lemma}
    The cost function $\hat{f}_K$ \eqref{math:f_IRL} admits  a unique stationary point over $\mathcal{K}$ that coincide with the optimal LQR gain $K^*$. Moreover, $L = - Y_K$, where $Y_K$ is defined in \eqref{math:grad_mod}.
\end{lemma}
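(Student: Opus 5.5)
The plan is to first prove the identity $L=-Y_K$. Substituting it into \eqref{math:grad_IRL} gives $\nabla_K\hat f_K = 2(RK-B^\top P_K)Y_K$, which is exactly the model-based gradient \eqref{math:grad_mod}. The stationary-point statement then reduces to a model-based fact.

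\emph{Step 1 (an identity valid for every $P$).} For any $P\in\mathbb{S}^n$ and $K\in\mathcal{K}$, differentiate $x^\top P x$ along \eqref{math:lin_sys} and use $\dot x = A_Kx + B(u+Kx)$, as in \eqref{math:V_dot}. Integrating over each interval gives
\begin{align*}
\Gamma^{\Delta x}\operatorname{vec}(P) - 2E(K)\operatorname{vec}(B^\top P) = \Gamma^{xx}\operatorname{vec}\bigl(A_K^\top P + PA_K\bigr).
\end{align*}
Now fix any $M\in\mathbb{S}^n$ and let $P_M$ be the unique solution of $A_K^\top P_M + P_MA_K = M$, which exists since $A_K$ is Hurwitz. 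Then $[\operatorname{vec}(P_M)^\top\ \operatorname{vec}(B^\top P_M)^\top]^\top$ solves $\bar\Phi\,\vartheta = \Gamma^{xx}\operatorname{vec}(M)$ exactly.

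Next I need to argue that $\Psi\bar\Phi^\dagger\Gamma^{xx}\operatorname{vec}(M) = \operatorname{vec}(P_M)$. This is the delicate point, because $\bar\Phi$ has duplicated columns: the columns of $\Gamma^{\Delta x}$ are invariant under $C_{n,n}$. My approach is as follows.
\begin{itemize}
\item By Assumption~\ref{ass:rank_2}, $\Phi$ has full column rank, so the kernel of $\bar\Phi$ consists exactly of vectors $[\operatorname{vec}(S)^\top\ 0]^\top$ with $S$ skew-symmetric.
\item The minimum-norm solution is orthogonal to this kernel, so its $P$-block is symmetric.
\item Solutions differ only by kernel elements, so this symmetric $P$-block must equal $P_M$.
\end{itemize}
I expect this to be the main obstacle; the remaining steps are routine.

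\emph{Step 2 (identify $L$).} By the definition of $L$ and Step 1,
\begin{align*}
\operatorname{tr}(LM) = \operatorname{vec}(L)^\top\operatorname{vec}(M) = \operatorname{vec}(I_n)^\top\operatorname{vec}(P_M) = \operatorname{tr}(P_M).
\end{align*}
Write $P_M = -\int_0^\infty e^{A_K^\top t}Me^{A_Kt}\,\mathrm{d}t$. By cyclicity of the trace, $\operatorname{tr}(P_M) = -\operatorname{tr}(MY_K)$, where $Y_K$ solves \eqref{math:Lyap2} with $I_n$ in place of $x_0x_0^\top$. Hence $\operatorname{tr}\bigl((L+Y_K)M\bigr)=0$ for all symmetric $M$. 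By Lemma~\ref{math:lemma_L}, $L+Y_K$ is symmetric, so choosing $M=L+Y_K$ gives $L=-Y_K$.

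\emph{Step 3 (stationary points).} With $L=-Y_K$, we have $\nabla_K\hat f_K = 2(RK - B^\top P_K)Y_K$. Since $Y_K\succ0$ is invertible, stationarity is equivalent to $K = R^{-1}B^\top P_K$, with $P_K$ solving \eqref{math:Lyap1} by Lemma~\ref{lemma:pol_imp}. Substituting this into \eqref{math:Lyap1} gives the CARE \eqref{math:CARE}, and $A-BK$ is Hurwitz. Under Assumption~\ref{ass:1}, the stabilizing solution $P^*$ of the CARE is unique, so $K=K^*$. Alternatively, one can invoke the gradient dominance of $f_K$ together with Remark~\ref{remark:same_P}, which states $\hat f_K = f_K$ on $\mathcal{K}$.
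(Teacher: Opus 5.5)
Your proof is correct, but it runs in the opposite direction from the paper's. The paper first uses three facts to identify $K^*$ as the unique stationary point: $\hat f_K\equiv f_K$ on $\mathcal{K}$ (Remark~\ref{remark:same_P}), the model-based gradient \eqref{math:grad_mod}, and $Y_K\succ 0$. It then asserts $L=-Y_K$ by comparing \eqref{math:grad_mod} with \eqref{math:grad_IRL}.

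You instead derive $L=-Y_K$ directly from the data equations. You show that $L$ represents the linear functional $M\mapsto\operatorname{tr}(P_M)$ on all of $\mathbb{S}^n$, and you evaluate the same functional a second time as $-\operatorname{tr}(MY_K)$. Stationarity then reduces to $RK=B^\top P_K$ and to the uniqueness of the stabilizing CARE solution.

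Your route buys rigor exactly where the paper is thin. Equating the two gradients only yields $(B^\top P_K-RK)(L+Y_K)=0$. That tests $L+Y_K$ only against symmetric perturbations $dK^\top N+N^\top dK$ with $N=B^\top P_K-RK$. These satisfy $z^\top(dK^\top N+N^\top dK)z=0$ whenever $Nz=0$, so they cannot span $\mathbb{S}^n$ when $m<n$. At $K^*$ they vanish altogether. The comparison therefore does not by itself determine $L$.

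Your argument tests against every symmetric $M$. It also handles explicitly the rank deficiency of $\bar\Phi$ caused by the duplicated columns of $\Gamma^{\Delta x}$, by showing that the minimum-norm solution returns the symmetric $P_M$. The paper's gradient derivation relies on this point without stating it. In exchange, the paper's route is a two-line reduction to known model-based facts.

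The only thing you should add is the existence half, which is trivial: $K^*\in\mathcal{K}$ and $P_{K^*}=P^*$, so $RK^*=B^\top P_{K^*}$ and $K^*$ is indeed a stationary point.
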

\begin{proof}
    By definition, $\hat{f}_K \equiv f_K$ over $\mathcal{K}$ (see Remark~\ref{remark:same_P}), and hence both functions share the same stationary points.    
    Since $Y_K \succ 0$ for all $K\in\mathcal{K}$, the condition $\nabla f_K = 0 $ holds if and only if $RK - B^\top P_K = 0$ which is satisfied uniquely at $K^* = R^{-1} B^\top P^*$. 
    Comparing the gradient expressions \eqref{math:grad_mod} and \eqref{math:grad_IRL}, and using $\hat{f}_K \equiv f_K$, implies $L = - Y_K$.
\end{proof}

\begin{theorem}
    The gradient flow 
    \begin{align}
     \dot K = - \beta \nabla \hat{f}_K, \quad K(0)\in\mathcal{K}, \label{math:flow_IRL} 
    \end{align}
    where $\beta >0$, 
    solves the optimization problem~\eqref{math:f_hat} and converges to the optimal feedback $K^*$. The trajectory $K(t)$ remains within $\mathcal{K}$ for all $t\geq 0$. 
\end{theorem}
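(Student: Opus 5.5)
The plan is to reduce the flow \eqref{math:flow_IRL} to the model-based policy gradient flow \eqref{math:grad_all}, up to the time scaling $\beta$, and then invoke the known results summarized in Subsection~\ref{subsec:PO_LQR}.

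\textbf{Step 1: identify the two vector fields on $\mathcal{K}$.} By Remark~\ref{remark:same_P}, $\hat f_K \equiv f_K$ on the open set $\mathcal{K}$, so the gradients coincide there. This can also be checked directly from the formulas. Lemma~\ref{lemma:pol_imp} gives that the IRL estimate of $B^\top P_K$ equals the model-based one. The preceding lemma gives $L=-Y_K$. Substituting into \eqref{math:grad_IRL} yields $\nabla \hat f_K = 2(B^\top P_K - RK)(-Y_K) = 2(RK-B^\top P_K)Y_K = \nabla f_K$. Hence \eqref{math:flow_IRL} reads $\dot K = -\beta \nabla f_K$. With the time rescaling $s=\beta t$, this is exactly \eqref{math:grad} with $K(0)=K_0\in\mathcal{K}$.

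\textbf{Step 2: well-posedness and invariance of $\mathcal{K}$.} The vector field is real analytic on $\mathcal{K}$, so local existence and uniqueness hold. Along solutions we have $\tfrac{d}{dt}\hat f_{K(t)} = -\beta\Vert\nabla \hat f_K\Vert_F^2\le 0$, so $K(t)$ stays in the sublevel set $\{K\in\mathcal{K}: f_K\le f_{K(0)}\}$. This set is compact by coercivity (Remark~\ref{remark:same_P}). Coercivity means $f_K\to\infty$ as $K\to\partial\mathcal{K}$ or $\Vert K\Vert\to\infty$, so the trajectory can neither leave $\mathcal{K}$ nor escape in finite time. The solution therefore exists for all $t\ge 0$ and remains in $\mathcal{K}$.

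\textbf{Step 3: convergence.} By LaSalle's principle on the compact invariant sublevel set, $K(t)$ approaches the set of stationary points. By the preceding lemma, this set is the single point $K^*$, so $K(t)\to K^*$ and $\hat f_{K(t)}\to f^*$. Alternatively, one can use gradient dominance of $f_K$ on sublevel sets, which gives the exponential estimate $\hat f_{K(t)}-f^*\le e^{-c\beta t}(\hat f_{K(0)}-f^*)$. Convergence of $K(t)$ then follows from uniqueness of the minimizer together with compactness.

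The main obstacle is Step 1, specifically the justification of $L=-Y_K$. Stated as a consequence of comparing the two gradient formulas, it is slightly circular: equal gradients $2(RK-B^\top P_K)Y_K = -2(RK-B^\top P_K)L$ only force $(RK-B^\top P_K)(Y_K+L)=0$. Since the flow only needs equality of gradients, I will rely directly on $\hat f_K\equiv f_K$ on the open set $\mathcal{K}$ rather than on that lemma, which makes the argument robust.
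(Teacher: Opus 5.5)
Your proposal is correct and is essentially the paper's argument: $\hat f_K-\hat f_{K^*}$ is nonincreasing along the flow, the sublevel set at level $\hat f_{K(0)}$ is compact inside $\mathcal{K}$ by the coercivity inherited from $f_K$ (Remark~\ref{remark:same_P}), and uniqueness of the stationary point $K^*$ gives convergence; you invoke LaSalle where the paper uses a strict Lyapunov function, and your explicit treatment of global existence is a welcome addition. Obtaining $\nabla\hat f_K=\nabla f_K$ directly from $\hat f_K\equiv f_K$ on the open set $\mathcal{K}$ is sound, and your caveat is justified: comparing the two gradient formulas only yields $(RK-B^\top P_K)(Y_K+L)=0$, although $L=-Y_K$ is nevertheless true, since $\operatorname{tr}(LS)=\operatorname{tr}(X)=-\operatorname{tr}(Y_K S)$ for every symmetric $S$, where $A_K^\top X+XA_K=S$.
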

\begin{proof}
Let $V(K) = \hat f_K - \hat f_{K^*}$ be the Lyapunov candidate. Then,  $V(K^*)=0$ and $V(K)>0$ for all $K\neq K^*$ over $\mathcal{K}$. The time derivative of $V(K)$ along \eqref{math:flow_IRL} is 
\begin{align}
 \dot V(K) = \operatorname{tr}\bigl( (\nabla \hat{f}_K )^\top \dot{K} \bigr) = -\beta \Vert \nabla \hat{f}_K \Vert_F^2 \leq 0. \label{math:dV} 
\end{align} 
Since $\dot V(K) <0 $ for all $K\in\mathcal{K}\setminus \{K^*\}$, $V(K)$ is a Lyapunov function, implying asymptotic stability of $K^*$. 
Moreover,  $K(t)$ remains in the compact sublevel set $\{K\in\mathcal{K} \mid \hat f_K \leq \hat f_{K(0)} \}$ for any $K(0)\in\mathcal{K}$, because $V(K)$ is nonincreasing along trajectories. Thus, the region of attraction of $K^*$ is $\mathcal{K}$.
\end{proof}

\begin{remark}
    The convergence of the gradient flows \eqref{math:proj_grad_1} and \eqref{math:flow_IRL} implies convergence of the corresponding gradient descent iterations under an appropriate choice of step size.
\end{remark}

\section{Discussion and Numerical Results}
\label{sec:comp} 
This section provides a concluding discussion about the approaches and  presents simulation results.

\subsection{Discussion}
\begin{table}[t]
\caption{Comparison of Solution Approaches for LQR}
\label{tab:comparison_methods}
\centering
\setlength{\tabcolsep}{2pt}%
\begin{tabular}{ccccccc}
\toprule
 & Comp. & Conv. & Stable Initial & Online & One-shot \\
Method & Effort & Rate & Policy  & Capability & Solution \\
\midrule
PI      & Moderate & Fast     & Yes & Limited  & No \\
Riccati Flow               & Moderate & Moderate & No  & No  & No \\
VI    & Low      & Slow     & No  & No  & No \\
Convex Program             & High     & Fast     & No  & No  & Yes \\
Gradient Flow       & Moderate & Moderate & Yes & Yes & No \\
\bottomrule
\end{tabular}
\end{table}

Table~\ref{tab:comparison_methods} summarizes the considered solution approaches in terms of computational effort, convergence rate, and implementation properties. Although all methods recover the same optimal gain, they differ significantly in their numerical characteristics and applicability. 
In particular, PI achieves fast convergence at moderate computational cost but requires a stabilizing initial policy, whereas VI offers low per-iteration effort at the expense of slow convergence. Riccati flow provides a continuous-time alternative with moderate complexity and convergence behavior, while convex programs yield a one-shot solution with higher computational cost. 
The notion of online capability refers to whether an approach can be applied in closed-loop while maintaining overall stability \cite{Zhao2025}. The gradient flow generates stabilizing feedbacks with smooth evolution, whereas PI  produces stabilizing iterates with potentially pronounced jitter, which may induce undesirable transients and complicate the analysis and certification of stability.
Overall, the approaches are complementary and enable different trade-offs between efficiency, convergence, and implementation.

In practice, direct data-driven approaches must account for uncertainties such as process and measurement noise. An important question is whether the controller obtained from noisy data continues to stabilize the system, and how much performance degradation in terms of LQR cost is expected \cite{dorfler2023}. Stability and suboptimality guarantees could be improved by extending the proposed methods with suitable regularization techniques \cite{dorfler2022}.
A detailed robustness analysis is beyond the scope of this paper and is left for future work.

\subsection{Numerical Results}
\label{sec:numerical} 
For the simulation results, 100 systems with $n=4,m=2$ were generated by sampling $A$ and $B$ matrices from a standard Gaussian distribution with approximately 50\% sparsity. The weighting matrices are $Q = I_4$ and $R = I_2$.
For each system, an initial stabilizing feedback gain $K_0$ was generated from a standard Gaussian distribution. 
The data matrices $\bar{X}, \tilde{U}, \tilde{X}$ and $\Gamma^{\Delta x},\Gamma^{xx}, \Gamma^{ux}$ were obtained using $T=20$ samples with sampling interval $\delta = \SI{0.1}{\second}$. The input signals were randomly generated and held constant for intervals of $\SI{0.01}{\second}$. 
As an ODE solver, we used \texttt{ode45} with default settings in MATLAB and as a ground truth for evaluation, $K^*$ and $P^*$ were obtained via the \texttt{lqr} function in MATLAB using the exact model.

\begin{figure}[t]
    \centering
    \includegraphics[width=0.495\columnwidth]{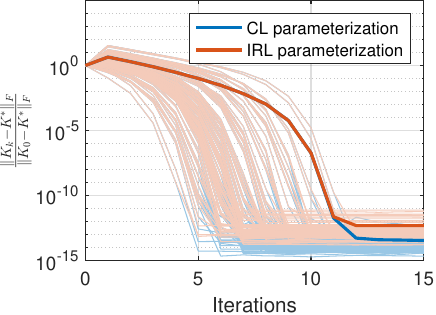}\hfill 
    \raisebox{-0.7mm}{\includegraphics[width=0.505\columnwidth]{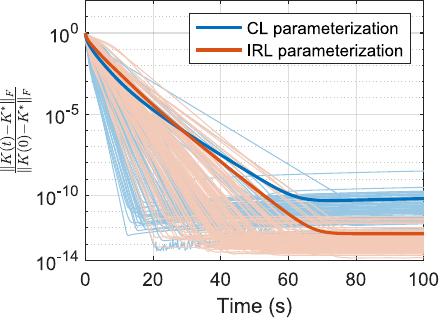}}
    \caption{Normalized residuals $\frac{\Vert K(k) - K^* \Vert_F}{ \Vert K(0) - K^* \Vert_F }$ and $\tfrac{\Vert K(t) - K^* \Vert_F}{ \Vert K(0) - K^* \Vert_F }$ for the PI and policy gradient flow, respectively, under the CL and IRL parameterizations. Light colored lines correspond to the 100 individual runs, while the dark lines represent the average performance.}
    \label{fig:PI_grad}
\end{figure}
For the PI, the normalized residuals  $\tfrac{\Vert K_k - K^* \Vert_F}{ \Vert K_0 - K^* \Vert_F }$ for the 100 systems under CL and IRL parameterizations are shown in the left panel of Fig.~\ref{fig:PI_grad}. 
Since both algorithms generate identical iterates $K_k$, the corresponding residuals are nearly identical, with minor differences at higher iterations caused by numerical inaccuracies.
The normalized residuals of the policy gradient flows are depicted in the right panel of Fig.~\ref{fig:PI_grad}. To match the average convergence speed of \eqref{math:proj_grad_1} and \eqref{math:flow_IRL}, we set $\alpha = 200 $ and $\beta = 1.5$.
The difference in convergence rate arises from the distinct parameterizations in $G$ and $K$. 
The CL parameterization yields a less accurate final solution than the IRL parameterization.
\begin{figure}[t]
    \centering
    \includegraphics[width=0.5\columnwidth]{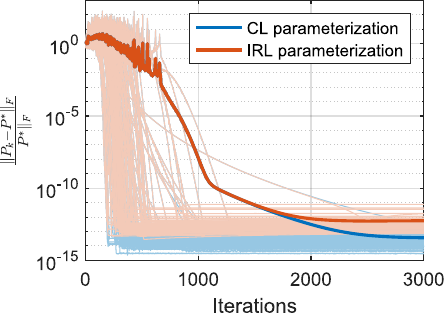}\hfill 
    \raisebox{-0.8mm}{\includegraphics[width=0.5\columnwidth]{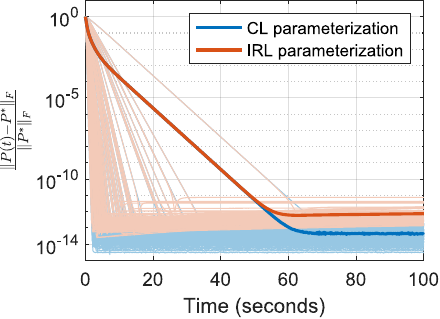}}
    \caption{Residuals $\frac{\Vert P_k - P^* \Vert_F}{ \Vert P^* \Vert_F }$, $\tfrac{\Vert P(t) - P^* \Vert_F}{\Vert  P^* \Vert_F }$ for the VI and Riccati flow, respectively, under the CL and IRL parameterizations.} 
    \label{fig:side_by_side_2}
\end{figure}
The normalized residuals with $P_0 = P(0)=0_{n,n}$ for the VI and the Riccati flow are shown in the left and right panels of Fig.~\ref{fig:side_by_side_2}, respectively. Since $P_k$ and $P(t)$ are identical for both parameterizations, their convergence rates coincide, whereas the numerical accuracy is higher for the CL parameterization. The VI was implemented with $\varepsilon_k = 40 / (k+1)^{0.8}\!$ and $B_q \! = \! \{ P\succeq 0 \!  \mid \!\Vert P \Vert_F \leq 5(q+1) \}$.

\begin{figure}[t]
    \centering
    \includegraphics[scale=0.6]{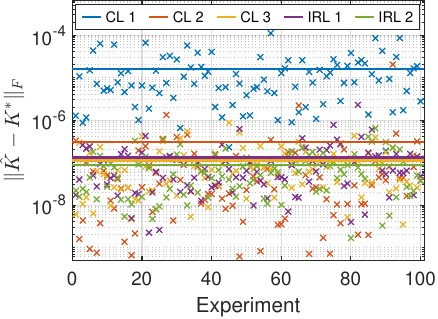}\hfill 
    \raisebox{1mm}{\includegraphics[scale=0.6]{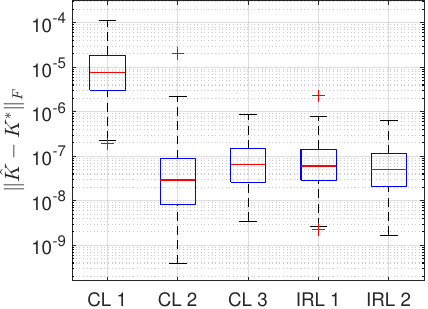}}
    \caption{Residuals $\Vert \hat{K} - K^* \Vert_F$ for the convex optimization problems \eqref{math:opt_G2} (CL 1), \eqref{math:conv_closed_2} (CL 2) and \eqref{math:closed_conv_3} (CL 3) under CL, and \eqref{math:LQR_data_2} (IRL 1) and \eqref{math:IRL_conv_2} (IRL 2) under IRL parameterization.}
    \label{fig:side_by_side_3}
\end{figure}

Next, we computed the optimal feedbacks by using the convex optimization problems \eqref{math:opt_G2}, \eqref{math:conv_closed_2}, \eqref{math:closed_conv_3},  \eqref{math:LQR_data_2}, and \eqref{math:IRL_conv_2}. As an SDP solver, we used MOSEK with default settings in YALMIP. 
For each problem, the residuals for all runs and the average performance across runs are shown in the left panel of Fig.~\ref{fig:side_by_side_3}. The average accuracies of the formulations are comparable, except for \eqref{math:opt_G2}, which achieves significantly lower accuracy despite being the most commonly used formulation in the literature. The right panel of Fig.~\ref{fig:side_by_side_3} presents box plots summarizing the stochastic performance. A noticeable discrepancy between median and mean is observed for \eqref{math:conv_closed_2}, where the median residual is low, whereas the mean is higher due to a small number of large outliers. In contrast, \eqref{math:LQR_data_2}, \eqref{math:LQR_data_2}, and \eqref{math:IRL_conv_2} exhibit more consistent stochastic behavior, with closely aligned median and mean values.

 \begin{table}
   \centering
   \caption{Average computation times for all methods}
   \label{tab:run}
   \setlength{\tabcolsep}{2pt}
\begin{tabular}{c c c c c c c c c c} 
\toprule
 & \multicolumn{2}{c}{Iteration PI} & \multicolumn{5}{c}{Convex program} & \multicolumn{2}{c}{Iteration VI} \\
\cmidrule(lr){2-3} \cmidrule(lr){4-8} \cmidrule(lr){9-10}
& CL & IRL & CL 1 & CL 2 & CL 3 & IRL 1 & IRL 2 & CL & IRL \\
\midrule
\makecell{ Avg. comp. \\ time } & $\SI{56}{\micro\second}$ & $\SI{61}{\micro\second}$ & $\SI{13}{\milli\second}$ & $\SI{13}{\milli\second}$ & $\SI{31}{\milli\second}$ & $\SI{10}{\milli\second}$ & $\SI{10}{\milli\second}$ & $\SI{43}{\micro\second}$ & $\SI{50}{\micro\second}$\\
\bottomrule
\end{tabular}
\end{table}
The average computation time per iteration in PI and VI, and for the complete convex programs, are denoted in Table~\ref{tab:run},
using labels for the convex programs consistent with Fig.~\ref{fig:side_by_side_3}.
The run times are comparable, except for the outlier CL 3 \eqref{math:closed_conv_3}, whose LMI size scales with the sample size $T$, unlike the other formulations. 
The simulation  results should be interpreted as a proof of concept illustrating feasibility, while a comprehensive performance evaluation is beyond the scope of this work.

\section{Conclusion}
This paper provides a unified perspective on data-driven approaches to the continuous-time LQR problem based on CL and IRL parameterizations. 
Moreover, we introduced PI, CARE, Riccati flow, VI, and convex program for the CL parameterization, as well as  CARE, convex programs, and gradient flow for the IRL parameterization (see Table~\ref{tab:contr}).
The results clarify the structural relationships between these approaches and highlight their distinct computational  characteristics, enabling a systematic selection of methods depending on the application setting. 
Future work will address robustness with respect to noise and extensions to more general control problems.

\appendix
\subsection{Proof of Lemma~\ref{lemma:pos_s}:} 
\label{app:proof1} 
 By Sylvester's equation Theorem \cite[Theorem 2.4.4.1]{horn2012matrix}, \eqref{math:Pol_eval} admits a unique solution if and only if $\sigma (\bar{X} G )\cap \sigma (- \bar{X} G ) = \emptyset$. Since $G\in\mathcal{G}$, $\bar{X} G$ is Hurwitz, 
     and the condition is satisfied. Thus, the solution to \eqref{math:Pol_eval} is $P_G = \int_0^\infty e^{ (\bar{X} G )^\top t} ( Q +  (\tilde{U} G)^\top R  \tilde{U} G) e^{\bar{X} G  t} \mathrm{d}t$. Since $Q +  (\tilde{U} G)^\top R  \tilde{U} G \succeq 0$, the integrand is positive semidefinite for all $t\geq 0$, leading to $P_G\succeq 0$.    
   Similar, if $Q\succ 0$, $P_G\succ 0$ follows. Next, assume 
    that a nonzero $z\in\mathbb{R}^n$ exists such that $P_G z =0$. Then, $ 0 = z^\top P_G z = \int_0^\infty \Vert \big(Q+(\tilde{U}G)^\top R \tilde{U} G \big)^{1/2} e^{\bar{X} G t} z  \Vert^2 \mathrm{d}t$,
   which implies $\sqrt{Q}z =0$. Observability of $\left(A,\sqrt{Q} \right)$ forces $z=0$, leading to a contradiction. Hence, $P_G \succ 0$ if $(A,\sqrt{Q})$ observable. Moreover, since $P_G\succeq 0$ for any $G\in\mathcal{G}$, $\operatorname{tr}\left(P_G  \right)=\sum_i \lambda_i(P_G)\geq 0$, implying $f_\mathcal{G}\subseteq \mathbb{R}_{\geq 0}$.\hfill \QED

\subsection{Proof of Lemma~\ref{lem:set}:} 
\label{app:proof2} 
   The set $\mathcal{K}$ \eqref{math:K} is path-connected, unbounded, and open \cite[Section 3]{bu2019}. For any $G_1,G_2 \in \mathcal{G}$ with associated $K_1,K_2 \in \mathcal{K}$, let $K(\lambda)$ for $\lambda\in[0,1]$ be a path in $\mathcal{K}$ connecting $K_1,K_2$. Define 
    $ G(\lambda) = \left[\begin{smallmatrix}\tilde{U} \\ \tilde{X}\end{smallmatrix}\right]^\dagger \Bigl[\begin{smallmatrix}-K(\lambda) \vphantom{\tilde{X}} \\ I_n \vphantom{\tilde{X}} \end{smallmatrix}\Bigr]+  (1-\lambda)N_1 + \lambda N_2$,
   where $N_i$ satisfy $\left[\begin{smallmatrix}\tilde{U} \\ \tilde{X}\end{smallmatrix}\right]N_i = 0$.  The path $G(\lambda)$ is continuous in $\lambda$, satisfies \eqref{math:data_c}, and ensures $\bar{X} G(\lambda)$ is Hurwitz because $K(\lambda)\in\mathcal{K}$.
    Hence, $G(\lambda) \in \mathcal{G}$, proving path-connectedness.
   Moreover, for any $K \in \mathcal{K}$, the solution set  $\mathcal{G}_K = G^p + \mathcal{N}$ is an affine subspace and is unbounded because $\mathcal{N}= \operatorname{ker}(\left[\begin{smallmatrix}\tilde{U} \\ \tilde{X}\end{smallmatrix}\right])$ is nontrivial. Additionally, as $\mathcal{K}$ contains gains with $\Vert K\Vert_2 \to \infty$, the minimal-norm solutions $G^p$ grow unbounded due to the linearity with respect to $K$. Thus, $\mathcal{G}$ inherits unboundedness from both $\mathcal{K}$ and $\mathcal{N}$.
For relative openness, consider the continuous map $\phi:\mathbb{R}^{T\times n}\to \mathbb{R}^{n\times n}$  defined by $ \phi(G)= \bar{X} G$ and the open set $\mathcal{H}=\{A \!\in \!\mathbb{R}^{n\times n}\mid \operatorname{Re}(\lambda_i (A ))\!< \!0 ~ \forall i\}$. By the open mapping theorem, $\phi^{-1}(\mathcal{H})$ is open in $\mathbb{R}^{T\times n}$. Since $\mathcal{G} = S\cap \phi^{-1}(\mathcal{H})$, it is relatively open in $\mathcal{S}$. \hfill \QED

\subsection{Proof of Lemma~\ref{Lemma:noncoercive}}
\label{app:proof3} 
 Let $\{G_{i}\}$ be a sequence in $\mathcal{G}$ converging to  $G \in \partial \mathcal{G}$. The corresponding sequence $\{P_{G_{i}}\}$ satisfies \eqref{math:Pol_eval} and \eqref{math:vec}. As $G_{i} \to G\in\partial G$, the matrix $I_n \otimes (\bar{X} G_{i} )^\top +   (\bar{X} G_{i}  )^\top \otimes I_n$ has at least one eigenvalue approaching zero. Therefore, $\Vert (I_n \otimes (\bar{X} G_{i}  )^\top +   (\bar{X} G_{i}  )^\top I_n)^{-1}\Vert_2 \to \infty $ as $G_{i} \to G\in\partial G$. Since $\operatorname{vec}\left( Q + (\tilde UG_{i})^\top R  \tilde UG_{i} \right)$ remains bounded for all $G_{i}$, $\Vert \operatorname{vec}(P_{G,i} )\Vert = \Vert P_{G,i} \Vert_F \to \infty$ as $G_{i}\to G$. The divergence of $\Vert P_{G,i} \Vert_F=\sqrt{\sum_i \sigma_i^2(P_{G,i})}$ implies $\Vert P_{G,i} \Vert_2 \to \infty$ since at least one singular value diverges.
   By Lemma \ref{lemma:pos_s}, $P_G \succeq 0$, so $f_G= \operatorname{tr }(P_G) =\sum_i \lambda_i(P_G) \geq \Vert P_G \Vert_2 = \lambda_{\max}(P_G)$, implying $f_G\to\infty$ as $G\to G\in\partial \mathcal{G}$.
   Next, let   $G_1 = \left[\begin{smallmatrix}\tilde{U} \\ \tilde{X}\end{smallmatrix}\right]^\dagger \Bigl[\begin{smallmatrix}-K \vphantom{\tilde{X}} \\ I_n \vphantom{\tilde{X}} \end{smallmatrix}\Bigr]+ N$,
   where $\left[\begin{smallmatrix}\tilde{U} \\ \tilde{X}\end{smallmatrix}\right] N = 0$ and $K\in\mathcal{K}$. Then, %
   $P_{G_1} = \int_0^\infty e^{ (\bar{X} G_1 )^\top t}  \bigl(Q +  {K}^\top R  K \bigr) e^{\bar{X} G_1  t} \mathrm{d}t$ 
 converges since $\bar{X} G_1$ is Hurwitz. Thus, $\Vert G_1 \Vert_2 \to \infty$ as $\Vert N \Vert_2 \to \infty$, but $\lim_{\Vert N \Vert_2 \to \infty } \Vert P_{G_1} \Vert_2 < \infty$ since $P_{G_1}$ is finite in the limit, implying $\lim_{\Vert N \Vert_2 \to \infty } f_{G_1} < \infty$.
    Moreover,  $\Vert P_{G_1} \Vert_2 \to \infty$ as $\Vert K \Vert_2\to\infty$, since $Q+K^\top R K \to \infty$ while $\bar{X}G_1$ remains Hurwitz, implying $\lim_{\Vert K \Vert_2 \to \infty} f_{G_1} = \infty$.
 \hfill \QED

\subsection{Proof of Lemma~\ref{lemma:aux}}
\label{app:proof4}
 By Assumption \ref{ass:2}, $\operatorname{im }(\tilde{U}^\top) \cap \operatorname{im }(\tilde{X}^\top) =\{0\}$. 
    Since $\Pi$ is the orthogonal projection onto $\operatorname{ker}(\tilde X)$, its kernel is $\operatorname{im}(\tilde{X}^\top)$. 
    Consequently, $\Omega = \tilde{U}\Pi$ has full row rank $m$, implying $\Omega \Omega^\dagger = I_m$ and $\Omega^\dagger = \Omega^\top (\Omega \Omega^\top)^{-1}$. Let $\Upsilon=\Pi M \Pi =\Omega^\top R \Omega$ and $\Upsilon^\# = \Omega^\dagger R^{-1} (\Omega^\dagger)^\top$. We verify the four Moore--Penrose conditions to show
     $\Upsilon^\dagger = \Upsilon^\#$,
     \begin{align}
        \Upsilon \Upsilon^\# \Upsilon & = \Omega^\top \! R \Omega \Omega^\dagger R^{-1} (\Omega^\dagger)^\top \Omega^\top R \Omega\! =\! \Omega^\top \! R \Omega \! =  \!\Upsilon,\\
        \Upsilon^\# \Upsilon \Upsilon^\# & = \Omega^\dagger R^{-1} (\Omega^\dagger)^\top \Omega^\top R \Omega \Omega^\dagger R^{-1} (\Omega^\dagger)^\top \! = \!\Upsilon^\#, \\
            \Upsilon \Upsilon^\# &= \Omega^\top R \Omega \Omega^\dagger R^{-1} (\Omega^\dagger)^\top = (\Omega^\dagger \Omega)^\top,\\
      \Upsilon^\# \Upsilon &= \Omega^\dagger R^{-1} (\Omega^\dagger)^\top \Omega^\top R \Omega = \Omega^\dagger \Omega.
     \end{align} 
    Since $\Omega^\dagger \Omega$ is an orthogonal projection matrix, it is symmetric, implying $\Upsilon \Upsilon^\# = (\Upsilon \Upsilon^\#)^\top$ and $(\Upsilon^\# \Upsilon)^\top =  \Upsilon^\# \Upsilon$. Next, we have $\Upsilon^\dagger M = \Omega^\dagger R^{-1} (\Omega^\dagger)^\top (\tilde{U}^\top R \tilde U)$. Note that $(\Omega^\dagger)^\top \tilde{U}^\top = (\tilde{U}\Omega^\dagger)^\top$ and $\tilde{U} \Omega^\dagger = \tilde{U} \Pi \Omega^\dagger = \Omega \Omega^\dagger = I_m$. Hence, $\Upsilon^\dagger M = \Omega^\dagger R^{-1} R \tilde{U} = (\tilde{U} \Pi)^\dagger \tilde{U}. $ \hfill \QED

\section*{References}
\vspace{-0.5cm}
\bibliographystyle{IEEEtran}%
\bibliography{bib}

\end{document}